\documentclass[11pt]{article}
\usepackage{amsfonts,amsmath,amssymb,amsthm}
\usepackage{bm,hyperref}
\usepackage[normalem]{ulem}
\usepackage{graphicx} 
\usepackage[margin=1in]{geometry}
\usepackage{enumerate}
\usepackage{tikz-cd}
\usepackage{xcolor}
\usepackage{comment}
\usepackage{float}

\newtheorem{theorem}{Theorem}
\newtheorem{proposition}{Proposition}

\newtheorem{lemma}{Lemma}
\newtheorem{remark}{Remark}
\newtheorem{definition}{Definition}
\newtheorem{assumption}{Assumption}

\DeclareMathOperator{\curl}{curl}
\DeclareMathOperator{\divv}{div}

\newcommand{\RT}{\mathcal{RT}}

\usepackage[textsize=tiny]{todonotes}
\title{Structure-Preserving Optimal Control of Maxwell’s Equations with Applications to Source Cloaking}
\author{Harbir Antil$^{1}$, Yaw Owusu-Agyemang$^{1}$, Rohit Khandelwal$^{1}$, \\ Jimmie Adriazola$^{2}$, Denis Ridzal$^{3}$ \\
$^{1}$Center for Mathematics and Artificial Intelligence and \\ Department of Mathematical Sciences, George Mason University, \\ Fairfax, VA 22030. \\
$^{2}$School of Mathematical and Statistical Sciences,  Arizona State University, \\ Tempe, AZ 85281. \\
$^{3}$Optimization and Uncertainty Quantification, Sandia National
	Laboratories, \\ PO Box 5800, Albuquerque, NM 87185-1320.
\thanks{
This research was sponsored, in part by, Sandia National Laboratories' LDRD project Active Circuits for EMI
Resilience in Contested Environments (ACEM). Sandia
National Laboratories is a multimission laboratory managed and operated by
National Technology and Engineering Solutions of Sandia, LLC., a wholly owned
subsidiary of Honeywell International, Inc., for the U.S.~Department of
Energy's National Nuclear Security Administration under contract DE-NA0003525.
This paper describes objective technical results and analysis. Any subjective
views or opinions that might be expressed in the paper do not necessarily
represent the views of the U.S.~Department of Energy or the United States
Government. \\
This work is also partially supported by the Office of Naval Research (ONR) under Award  NO: N00014-24-1-2147, NSF grant DMS-2408877, NSF grant PHY-2316622, and the Air Force Office of Scientific Research (AFOSR) under Award NO: FA9550-25-1-0231. 
}
}
\date{}

\begin{document}
\maketitle

\begin{abstract}
We develop a structure-preserving solution framework for the optimal control of the time-dependent Maxwell's equations.
Building on a well-posedness theory for a weak form of the forward problem, we first analyze a forward solver that couples N\'ed\'elec and Raviart--Thomas finite elements with Crank--Nicolson time stepping.
The solver preserves the de~Rham structure, enforces a discrete Gauss law, exactly satisfies a per-time-step energy balance, and converges to the weak solution under low regularity assumptions on the problem data, which are dictated by the optimal control setting. 
To control the Maxwell system, we add the curl of a space-time current density as a source to Amp\`ere's law.
The curl form yields charge conservation without auxiliary constraints.
We prove the well-posedness and continuity of the control-to-state map, derive the adjoint system and a gradient representation for a tracking-type objective functional, and formulate a discrete optimization scheme that inherits structure preservation from the forward solver.
Our discrete stationarity conditions are consistent with their continuous counterparts, and the discrete optimal controls converge, with mesh and time refinements, to the continuous optima. 
We demonstrate the merits of our optimal control formulation and the theoretical developments by numerically solving a series of source-cloaking model problems.
\end{abstract}

\section{Introduction}

Maxwell’s equations are foundational to a variety of engineering applications, including nanoscale optics, electromagnetic shielding, non-destructive evaluation, medical imaging, and magnetic confinement fusion.
Such applications routinely feature nonsmooth geometries, discontinuous material tensors, and sources of only square-integrable regularity in time and space.
In this regime, it is essential to preserve the geometric structure of Maxwell's equations---tangential boundary conditions, Gauss's laws, and the curl–div sequence---both for the forward numerical simulation and for the numerical optimization motivated by optimal control and optimal design applications.

In this paper, we develop a structure-preserving solution framework for the optimal control problem
\begin{equation}\label{eq:intro_obj}
\min_{z} 
\left\{ J(E,B,z) := \frac12 \int_0^T {\left(\| \epsilon^\frac12 (E-E_d)\|^2_{L^2(\Omega_{\text{obs}})^3} + \| \mu^{-\frac12} (B-B_d)\|^2_{L^2(\Omega_{\text{obs}})^3} \right)} dt+\frac12 \|z\|^2_Z 
\right\}
\end{equation}
subject to Maxwell's equations on a bounded Lipschitz domain $\Omega\subset\mathbb{R}^3$ over a finite time horizon~$(0,T)$,
\begin{equation}\label{eq:state-ocp}
\left\{
\begin{aligned}
&\varepsilon \partial_t E - \mathrm{curl}\big(\mu^{-1} B\big) + \sigma E 
   = \chi_{\Omega_{\text{src}}} I_{\rm src} + \chi_{\Omega_{\text{ctrl}}} \, \mathrm{curl}\, z 
   &&\text{in }\Omega\times(0,T),\\
&\partial_t B + \mathrm{curl}\,E = 0 
   &&\text{in }\Omega\times(0,T),\\
&E\times\nu = 0 &&\text{on }\partial\Omega\times(0,T),\\
&E(0)=E_0,\quad B(0)=B_0 &&\text{in }\Omega,
\end{aligned}
\right.
\end{equation}
including Gauss's law for electricity $\partial_t \divv(\varepsilon E) + \divv(\sigma E-\chi_{\Omega_{\text{src}}} I_{\rm src} )=0$ and Gauss's law for magnetism $\divv B(\cdot,t) = 0$. 
Here $E$ is the electric field, $B$ is the magnetic flux density, and $\curl z$ is the unknown control current density.
Given a source current density~$I_{\rm src}$ confined to the region~$\Omega_{\text{src}} \subseteq \Omega$, the role of the control~variable~$z$, defined in the region~$\Omega_{\text{ctrl}} \subseteq \Omega$, is to steer~$E$ and~$B$ toward the given target functions~$E_d$ and~$B_d$, respectively, in the observation region~$\Omega_{\text{obs}} \subseteq \Omega$.
Moreover, $\varepsilon$, $\mu$, and $\sigma$ are the electric permittivity, magnetic permeability, and electric conductivity tensors, respectively,
while~$E_0$ and~$B_0$ are the initial data.
The Maxwell system is augmented by the perfectly electrically conducting (PEC) boundary condition~$E\times\nu=0$ on~$\partial\Omega$, where~$\nu$ is the outward-pointing unit vector normal to $\partial \Omega$.
To ensure conservation of the control charge density, we consider the control current density $\curl z$ instead of $z$.
Indeed, recall that $\divv \curl z = 0$.

Building on our theory for the forward problem~\cite{HAntil_2025a}, we work with uniformly elliptic $\varepsilon,\mu\in L^\infty(\Omega)^{3\times 3}$ and nonnegative $\sigma\in L^\infty(\Omega)^{3\times 3}$.
For the source $I_\text{src} \in L^2\big(0,T;L^2(\Omega_\text{src})^3\big)$ and initial data $(E_0, B_0) \in L^2\big(0,T;L^2(\Omega)^3\big)\times L^2\big(0,T;L^2(\Omega)^3\big)$, the weak solution class is
\[
E \in H^1\!\big(0,T;H_0(\curl;\Omega)^*\big)\cap C([0,T];L^2(\Omega)^3),\qquad
B \in H^1\!\big(0,T;H(\curl;\Omega)^*\big)\cap C([0,T];L^2(\Omega)^3),
\]
with the PEC boundary condition $E\times\nu=0$ on $\partial\Omega$.
As in~\cite{HAntil_2025a}, to discretize $E$ and $B$ in space, we use the de~Rham-compatible N\'ed\'elec and Raviart-Thomas (RT) spaces $\mathcal N_h^0\subset H_0(\curl;\Omega)$ and $\mathcal{RT}_h\subset H(\divv;\Omega)$, respectively, with $\curl\,\mathcal N_h^0\subset\mathcal{RT}_h$, which enforces a discrete Gauss law and supports energy identities at the semi-discrete level.
We use the spaces
\[
Z := L^2\big(0,T; H(\mathrm{curl};\Omega_\text{ctrl})\big), 
\qquad 
Y := L^2\big(0,T;L^2(\Omega_\text{obs})^3\big)\times L^2\big(0,T;L^2(\Omega_\text{obs})^3\big),
\]
for the control $z$ and the target data $E_d$ and $B_d$, respectively.
For $\alpha_1 > 0$ and $\alpha_2 > 0$, the control norm is given by
\[
\|z\|_Z^2 := \int_0^T \big( \alpha_1\|z(t)\|_{L^2(\Omega_\text{ctrl})^3}^2 + \alpha_2\|\mathrm{curl}\,z(t)\|_{L^2(\Omega_\text{ctrl})^3}^2 \big)\,dt.
\]

\paragraph{Main contributions.}
Our contributions are six-fold.
First, we extend the analysis of semi-discrete convergence from \cite{HAntil_2025a} to develop a structure-preserving \emph{fully discrete} forward scheme based on Crank–Nicolson (CN) time stepping, coupled with the N\'ed\'elec/RT finite element discretization in space.
We prove a per-step energy balance, stability for nonnegative $\sigma$, preservation of a discrete Gauss law for the magnetic induction, and convergence of fully discrete solutions to the unique weak solution under low-regularity assumptions on problem data on Lipschitz domains.
Second, we show that for every control $z\in Z$, the Maxwell system admits a unique weak solution $(E,B)$ in the energy class.
The control-to-state map $S:z\mapsto(E,B)$ is affine and bounded, yielding existence of optimal controls for convex quadratic objective functionals and closed convex admissible sets.
Third, we derive the adjoint Maxwell system in dual energy spaces and prove that the reduced objective functional is G\^ateaux differentiable in $z$, with the gradient represented via the adjoint variables.
This yields necessary and, with convexity assumptions, sufficient optimality conditions.
Moreover, the adjoint derivation is combined with the discrete CN-N\'ed\'elec/RT scheme used for the forward problem, resulting in a structure-preserving discrete adjoint scheme.
Fourth, using commuting projections and the de~Rham complex, we obtain a consistent discrete gradient of the reduced objective functional.
In other words, the discrete stationarity conditions match the Galerkin optimality system, enabling a structure-preserving discrete optimization scheme.
Fifth, we prove convergence of discrete optimal controls as the mesh size $h$ and the time step $\Delta t$ tend to zero.
Specifically, the discrete optimal triplets $(E_{h,\Delta t},B_{h,\Delta t},z_{h,\Delta t})$ admit subsequences converging to a continuous optimal triplet $(E,B,z)$; with strict convexity in $z$, the full sequence converges.
Sixth, using the developed structure-preserving scheme, we solve a series of source-cloaking model problems (with $E_d=0$ and $B_d=0$), in one, two, and three dimensions.

\paragraph{Relation to prior work.}
Our analysis sits at the intersection of structure-preserving finite elements (N\'ed\'elec/RT pairs and the de~Rham sequence \cite{monk2003finite, Hiptmair2002, bossavit1998, phillips-shadid2018maxwell}), energy-stable time stepping (e.g., Crank--Nicolson \cite{cranknicolson1947, vidarthomee2006, MortonMayersNumerical2005, strikwerda2004}), and the theory of Maxwell's equations with low regularity assumptions \cite{DuvautLions1976, HAntil_2025a}. Relative to this literature, the present work advances the state of the art in two major directions: 
(i) we prove convergence of a \emph{fully} discrete, structure-preserving scheme to weak solutions assuming $L^2$ data on Lipschitz domains; and 
(ii) we develop a convergent structure-preserving optimal control framework that operates entirely in this low-regularity setting. 
To ensure charge conservation without additional constraints, the control enters Amp\`ere’s law through a curl lift, $\mathcal{C} z = \curl z$, for $z\in L^2(0,T;H(\curl;\Omega))$, so that $\divv(\curl z)=0$ holds identically.
If charge conservation is not required, the control space can be relaxed to $L^2(0,T; L^2(\Omega)^3)$ with $\mathcal C=\mathcal I$.

To our knowledge, the only work that treats optimal control of the \emph{fully time-dependent} Maxwell system is \cite{bommer2016optimal}.
We discuss four methodological differences. 
First, our tracking objective functional is formulated over the full time horizon $(0,T)$, enabling, e.g., cloaking for all times, whereas \cite{bommer2016optimal} considers a terminal-time target. 
Second, we do not use a space-time separable control $z(x,t) = u(x)a(t$); in fact, the example in Figure~\ref{fig:split_vs_nosplit} demonstrates that, even in a simple one-dimensional setting, a separable control cannot effectively cloak typical sources $I_{\text{src}}(x,t)$, while a non-separable control $z(x,t)$ can. 
Third, in~\cite{bommer2016optimal} charge conservation is enforced via an explicit divergence-free constraint on the control, including additional time regularity, while in our formulation it is built in through the curl lift and does not require additional smoothness. 
Finally, in the context of the optimal control problem with low data regularity, we provide a convergence analysis for the widely accepted CN-N\'ed\'elec/RT discretization, which is not directly supported by~\cite{bommer2016optimal}.

 \begin{figure}[h!]
    \centering
    \includegraphics[width=0.48\linewidth]{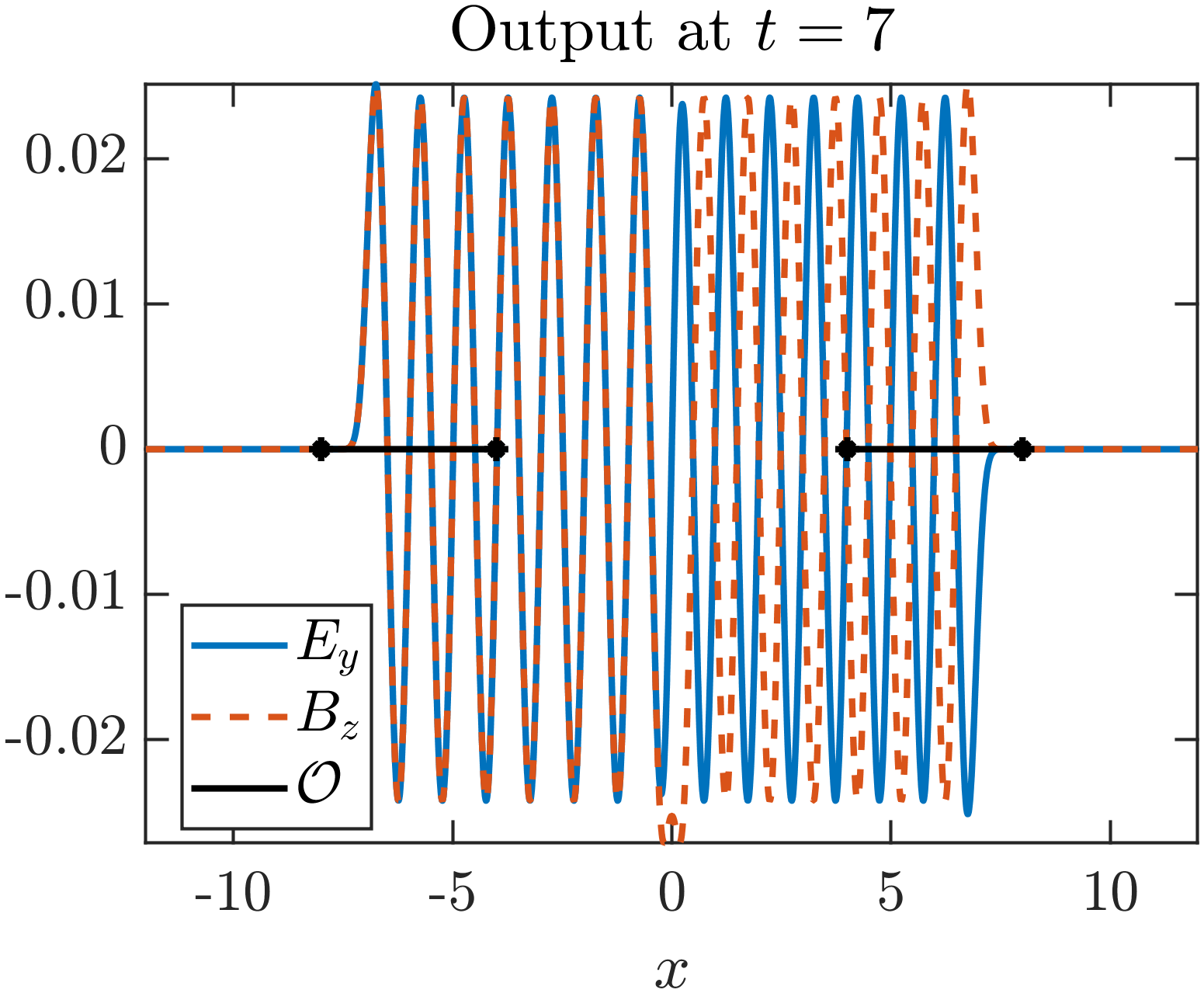}
    \includegraphics[width=0.48\linewidth]{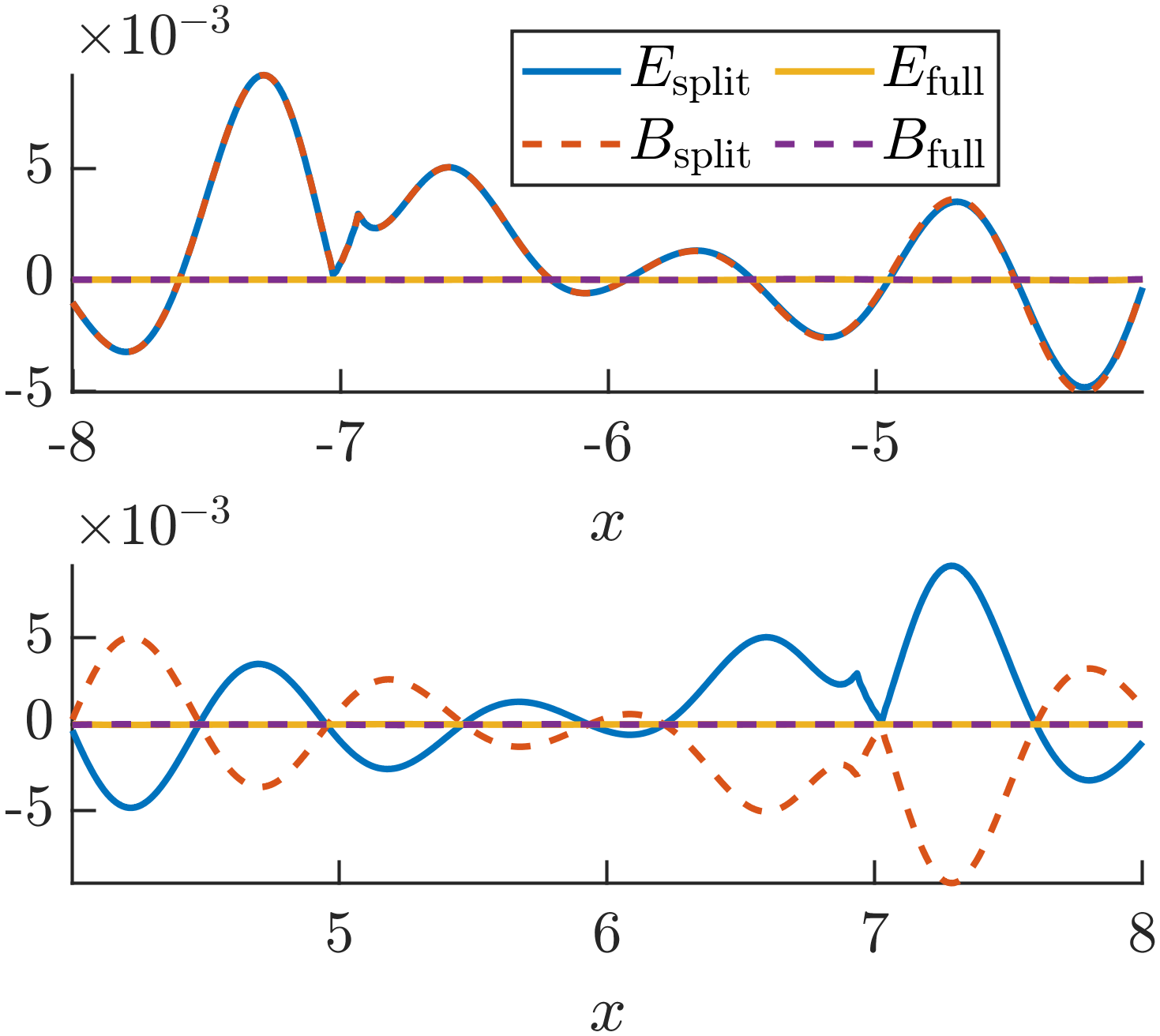}
    \caption{A snapshot of electric and magnetic field components, propagating transversely along the $x$-axis, consistent with a one-dimensional realization of the source cloaking problem with $E_d=0$ and $B_d=0$. The left panel shows the uncontrolled propagation of electromagnetic waves, radiating from a source $I_\text{src}$ oscillating sinusoidally in time and localized at the origin, with solid horizontal lines indicating observation regions $\mathcal{O}=\Omega_{\text{obs}}$. The right panels show the controlled dynamics in each of these observation regions. We see that if the control is assumed to be in a space-time factored form (fields labeled by `split'), results are clearly suboptimal with respect to our formulation (fields labeled by `full'). Here we used a standard time-stepped pseudospectral discretization~\cite{Trefethen2000} and a backtracking line search to execute the numerical optimization \cite{NocedalWright2006}.}
    \label{fig:split_vs_nosplit}
\end{figure}

\paragraph{Organization.}
Section~\ref{sec:background} records the assumptions, notation, and the weak formulation. Section~\ref{sec:forward-num} introduces the N\'ed\'elec/RT spatial discretization and CN time stepping, establishing well-posedness, the exact energy identity, discrete Gauss-law preservation, and convergence of the fully discrete forward scheme. Section~\ref{sec:ocp} studies the optimal control problem and proves well-posedness and continuity of the control-to-state map. Sections~\ref{sec:adjoint} and \ref{sec:foc} derive the adjoint and first-order optimality conditions. Section~\ref{sec:discrete-ocp} presents the fully discrete state/adjoint discretization, the consistent discrete gradient, and convergence of discrete optimal controls. Numerical illustrations of source cloaking are presented in Section~\ref{sec:numerics}.

\section{Background and preliminaries}
\label{sec:background}

Let $\Omega\subset\mathbb{R}^3$ be a bounded Lipschitz domain and let $T>0$ be fixed. 
We study the time–dependent Maxwell's equations with spatially varying (possibly anisotropic) material tensors,
\begin{subequations}\label{eq:Maxwell_strong}
\begin{align}
\varepsilon(x)\,\partial_t E - \curl\big(\mu^{-1}(x) B\big) + \sigma(x)\,E &= f(x,t)
&&\text{in }\Omega\times(0,T), \label{eq:max1}\\
\partial_t B + \curl E &= 0
&&\text{in }\Omega\times(0,T). \label{eq:max2}
\end{align}
\end{subequations}
The system is equipped with initial data and PEC boundary conditions,
\[
E(0,x)=E_0(x),\qquad B(0,x)=B_0(x)\quad\text{in }\Omega,
\qquad E\times \nu=0\quad\text{on }\partial\Omega\times(0,T).
\]
where $\nu$ is the outward-pointing vector normal to $\partial \Omega$.

\begin{assumption}\label{ass:data}
Throughout the paper we suppose: 
\begin{itemize}
  \item The material tensors $\varepsilon(x),\sigma(x)$, and $\mu(x)\in L^\infty(\Omega;\mathbb{R}^{3\times 3})$ are symmetric;
  \item there exist constants $\varepsilon_0,\mu_0>0$ such that, for a.e.\ $x\in\Omega$ and all $\xi\in\mathbb{R}^3$,
  \[
    \xi^\top \varepsilon(x)\xi \ge \varepsilon_0 |\xi|^2,
    \qquad
    \xi^\top \mu(x)\xi \ge \mu_0 |\xi|^2;
  \]
  \item $\sigma$ is symmetric positive semidefinite, $f\in L^2(0,T;L^2(\Omega)^3)$, and $E_0,B_0\in L^2(\Omega)^3$ with $\divv B_0=0$ in $L^2(\Omega)$.
\end{itemize}
\end{assumption}

Throughout, we write $(\cdot,\cdot)$ to denote the $L^2(\Omega)$-inner product on the spatial domain $\Omega$, for both scalar-
or vector-valued functions as dictated by the context.
$\|\cdot\|_{L^2(\Omega)}$ denotes the corresponding norm.
For a Banach space $X$, we denote its topological dual by $X^*$ and write
$\langle\cdot,\cdot\rangle_{X^*,X}$ for the duality pairing between $X^*$ and $X$. The Bochner space $L^p\big(0,T;X\big)$ represents the space of functions which are $L^p(0,T)$ and map to the space $X$. 
Moreover, $\mathcal{D} = C_c^\infty(\Omega)$ is the space of infinitely differentiable functions with compact support in $\Omega$ and $\mathcal D'$ the space of distributions on $\mathcal{D}$.  
We recall the standard Sobolev spaces
\[
\begin{aligned}
H(\divv;\Omega) &:= \{v\in L^2(\Omega)^3:\ \divv v\in L^2(\Omega)\},\\
H_0(\divv;\Omega) &:= \{v\in H(\divv;\Omega):\ \gamma_\nu(v)=v\cdot\nu=0\text{ on }\partial\Omega\},\\
H(\divv^0;\Omega) &:= \{v\in H(\divv;\Omega):\ \divv v=0\ \text{a.e.\ in }\Omega\},\\
H(\curl;\Omega) &:= \{v\in L^2(\Omega)^3:\ \curl v\in L^2(\Omega)^3\},\\
H_0(\curl;\Omega) &:= \{v\in H(\curl;\Omega):\ \gamma_\tau(v)=v\times\nu=0\text{ on }\partial\Omega\}.
\end{aligned}
\]
Here $\gamma_\nu$ and $\gamma_\tau$ are the normal and tangential trace operators; see, e.g., \cite{VGirault_PARaviart_1986a} and \cite[Thm.~5.4]{ABuffa_PJCiarlet_2001a}. For boundary terms involving tangential traces, we use the standard boundary duality pairing $\langle\cdot,\cdot\rangle_{\partial\Omega}$.

\begin{definition}[Weak solution]\label{def:weak_soln}
We call $(E,B)$ a weak solution of~\eqref{eq:Maxwell_strong} if
\[
E \in H^1\!\big(0,T;H_0(\curl;\Omega)^*\big)\cap C([0,T];L^2(\Omega)^3),
\qquad
B \in H^1\!\big(0,T;H(\curl;\Omega)^*\big)\cap C([0,T];L^2(\Omega)^3),
\]
and, for $t$ a.e.\ in $(0,T)$,
\begin{subequations}\label{eq:MaxWeak}
\begin{align}
\langle \varepsilon\,\partial_t E,\,\psi\rangle_{H_0(\curl;\Omega)^*,H_0(\curl;\Omega)} 
+ (\sigma E,\psi) - (\mu^{-1}B,\curl\psi) &= (f,\psi),
&&\forall\,\psi\in H_0(\curl;\Omega),\\
\langle \partial_t B,\,\phi\rangle_{H(\curl;\Omega)^*,H(\curl;\Omega)} + (E,\curl\phi) &= 0,
&&\forall\,\phi\in H(\curl;\Omega),
\end{align}
\end{subequations}
with initial conditions $E(0)=E_0$, $B(0)=B_0$ in $L^2(\Omega)^3$. 
If, in addition, $\divv B_0=0$ in $L^2(\Omega)$, then $B\in L^\infty(0,T;H(\divv^0;\Omega))$.
\end{definition}

The well-posedness of \eqref{eq:Maxwell_strong} in the sense of Definition~\ref{def:weak_soln} under Assumption~\ref{ass:data} has been established in \cite{HAntil_2025a}, see also \cite[Eqns.~(4.31)--(4.32), p.~346; Thm.~4.1]{GDuvaut_JLLions_1976a}, \cite[Thm.~2.4]{AKirsch_ARieder_2016a}, \cite[Lemma~3.2]{IYousept_2020a}, \cite[Sec.~8.2]{RLeis_1997a} and \cite[Sec.~7.8]{MFabrizio_MMorro_2003a}. 
For completeness, we state the result from \cite{HAntil_2025a}. 

\begin{theorem}[Existence, uniqueness, and stability]\label{thm:main}
Let $\Omega\subset\mathbb{R}^3$ be bounded and Lipschitz and $T>0$. 
Under Assumption~\ref{ass:data}, there exists a unique weak solution $(E,B)$ of 
\eqref{eq:Maxwell_strong} in the sense of Definition~\ref{def:weak_soln}. 
Moreover, the solution satisfies
\begin{equation}\label{eq:apriori_bound}
\begin{aligned}
\|\partial_t E\|_{L^2(0,T;H_0(\curl;\Omega)^*)}
&+ \|\partial_t B\|_{L^2(0,T;H(\curl;\Omega)^*)}
+ \|E\|_{C([0,T];L^2(\Omega))} + \|B\|_{C([0,T];L^2(\Omega))}\\
&\le C\Big(\|f\|_{L^2(0,T;L^2(\Omega))}+\|E_0\|_{L^2(\Omega)}+\|B_0\|_{L^2(\Omega)}\Big),
\end{aligned}
\end{equation}
where $C>0$ depends only on $T$ and the bounds/ellipticity of $\varepsilon,\mu,\sigma$.
\end{theorem}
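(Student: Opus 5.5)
We would prove Theorem~\ref{thm:main} by a Faedo--Galerkin argument combined with an energy estimate, using the uniform ellipticity of $\varepsilon$ and $\mu$ to work in the weighted norms $\|\varepsilon^{1/2}\cdot\|$ and $\|\mu^{-1/2}\cdot\|$. For the Galerkin spaces we choose a nested sequence of finite-dimensional spaces $V_m\subset H_0(\curl;\Omega)$ and $W_m\subset H(\curl;\Omega)$ whose union is dense; a convenient choice is N\'ed\'elec spaces on refined meshes, with $W_m\supseteq \curl V_m$. We then project $E_0,B_0$ onto these spaces by $L^2$ projection, weighted by $\varepsilon$ and $\mu^{-1}$ respectively. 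The resulting semi-discrete system is a linear ODE system with a symmetric positive definite mass matrix, so it has a unique solution on $[0,T]$.

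\textbf{Energy estimate.} Testing the first equation with $\psi=E_m$ and the second with $\phi=\mu^{-1}B_m$ requires $\mu^{-1}B_m\in W_m$. To avoid this, we pose the discrete Faraday law in $L^2$ form, $\partial_t B_m+\curl E_m=0$ with $B_m\in \curl V_m+\text{span}(P B_0)$. The curl terms then cancel: $(\mu^{-1}B_m,\curl E_m)$ appears with opposite signs. Using $\sigma\ge0$, Cauchy--Schwarz, Young and Gronwall, we obtain the uniform bound
\[
\sup_t\big(\|\varepsilon^{1/2}E_m\|^2+\|\mu^{-1/2}B_m\|^2\big)\le C\big(\|f\|^2_{L^2(0,T;L^2)}+\|E_0\|^2+\|B_0\|^2\big).
\]
Bounds on the time derivatives in the dual spaces come from reading off the equations. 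For example, $|\langle\varepsilon\partial_tE_m,\psi\rangle|\le C(\|f\|+\|E_m\|+\|B_m\|)\|\psi\|_{H(\curl)}$, which needs a uniformly $H(\curl)$-stable projection onto $V_m$; the commuting quasi-interpolants of Sch\"oberl or Falk--Winther provide this.

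\textbf{Passage to the limit.} We extract weak-$*$ limits in $L^\infty(0,T;L^2)$ together with weak limits of the time derivatives in $L^2(0,T;H_0(\curl)^*)$ and $L^2(0,T;H(\curl)^*)$, respectively. We then pass to the limit in the equations tested against $\psi\,\eta(t)$ with $\eta\in C_c^\infty(0,T)$, and use density to extend to all test functions. The initial conditions are recovered by the usual integration-by-parts-in-time argument.

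\textbf{Continuity, uniqueness and divergence.} \emph{The main obstacle is $E,B\in C([0,T];L^2)$.} The Lions--Magenes lemma does not apply directly, because $L^2\subset H(\curl)^*$ is not a Gelfand triple with a $V$-valued solution: $E$ is only $L^2$ in space. We would first try the strategy of regularizing in time by mollification, or alternatively of establishing the energy identity for the limit through a Steklov average/duality argument. That identity gives continuity of $t\mapsto\|\varepsilon^{1/2}E\|^2+\|\mu^{-1/2}B\|^2$. Combined with the weak continuity in $L^2$, which follows from the $H^1$-in-time dual regularity and the $L^\infty(L^2)$ bound, this yields strong continuity. The same energy identity applied to the difference of two solutions (with $f=0$ and zero initial data) yields uniqueness, and taking the limit of the Galerkin bounds gives \eqref{eq:apriori_bound}. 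Finally, testing the Faraday equation with $\phi=\nabla q$, $q\in H^1(\Omega)$, gives $\partial_t(B,\nabla q)=0$. Hence $\divv B(t)=\divv B_0=0$, with the normal trace also preserved, and so $B\in L^\infty(0,T;H(\divv^0;\Omega))$.
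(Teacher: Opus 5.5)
The paper does not prove this theorem itself; it cites \cite{HAntil_2025a}. Your overall architecture is the Galerkin route that citation relies on: N\'ed\'elec spaces for $E$, a discrete Faraday law posed strongly so the curl terms cancel, an energy/Gr\"onwall bound, and passage to the limit against $\psi\,\eta(t)$. Compare Step~2 of Proposition~\ref{prop:S0star} and the proof of Theorem~\ref{thm:FD-convergence}. Your corrected choice $B_m\in B_m(0)+\curl V_m$ plays the role of the N\'ed\'elec/RT pairing. Your first choice, $W_m\subset H(\curl;\Omega)$ with $W_m\supseteq\curl V_m$, cannot hold, because $\curl V_m$ consists of tangentially discontinuous Raviart--Thomas fields.

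\textbf{A flawed but removable step: the dual bound on $\partial_tE_m$.} The Galerkin Amp\`ere law holds only for $\psi\in V_m$. Replacing $\psi$ by $\Pi\psi$ in $(\varepsilon\partial_tE_m,\psi)$ is legitimate only if $\Pi$ is the $\varepsilon$-weighted $L^2$-orthogonal projection onto $V_m$. The Sch\"oberl and Falk--Winther operators are $H(\curl)$-bounded but not orthogonal. The remainder $(\varepsilon\partial_tE_m,\psi-\Pi\psi)$ is therefore uncontrolled, since there is no uniform $L^2$ bound on $\partial_tE_m$. Uniform $H(\curl)$-stability of the orthogonal projection onto edge elements is not something you can take off the shelf.

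The step is also unnecessary. As in Theorem~\ref{thm:FD-convergence}, the $L^\infty(0,T;L^2)$ bounds suffice once the time derivative is moved onto $\eta'$. The limit equations then give $\partial_t(\varepsilon E)=f-\sigma E+\curl(\mu^{-1}B)$ in $L^2(0,T;H_0(\curl;\Omega)^*)$ and $\partial_tB=-\curl E$ in $L^2(0,T;H(\curl;\Omega)^*)$. The derivative part of \eqref{eq:apriori_bound} can be read off directly from these. What you actually control is $\partial_t(\varepsilon E)$, which is the only meaning $\langle\varepsilon\partial_tE,\psi\rangle$ can have for merely $L^\infty$ coefficients.

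\textbf{The genuine gap: continuity in $L^2$ and uniqueness.} You flag this as the main obstacle but leave it as a strategy to ``try''. Time regularization alone does not make $E$ or $\mu^{-1}B$ admissible test functions, because they remain only $L^2$ in space. The missing idea is that time regularization buys spatial regularity through the equations. Set $E_\delta=\rho_\delta*E$ and $B_\delta=\rho_\delta*B$; their time derivatives lie in $L^2(\Omega)^3$.
\begin{itemize}
\item The weak Amp\`ere law then gives $\mu^{-1}B_\delta(t)\in H(\curl;\Omega)$, with $\curl(\mu^{-1}B_\delta)=\varepsilon\partial_tE_\delta+\sigma E_\delta-f_\delta$.
\item The weak Faraday law, tested with all $\phi\in H(\curl;\Omega)$, gives $\curl E_\delta=-\partial_tB_\delta$ and $(E_\delta,\curl\phi)=(\curl E_\delta,\phi)$. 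Hence $E_\delta(t)\in H_0(\curl;\Omega)$; this is where the PEC condition re-enters.
\item Testing with $E_\delta$ and $\mu^{-1}B_\delta$, Green's formula cancels the curl terms. Letting $\delta\to0$ yields the energy identity on $[s,t]$ for a.e.\ $s<t$.
\end{itemize}

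This is still short of what you claim. Norm continuity of the weakly continuous representative needs the identity at every $t$, not a.e. One way to close it:
\begin{itemize}
\item Prove uniqueness first: extend a zero-data difference by zero to $t<0$ before mollifying.
\item Then restart your Galerkin scheme at an arbitrary $t_0$ from $(E(t_0),B(t_0))$. By uniqueness its limit is $(E,B)$.
\item Its limiting energy inequality on $[t_0,t]$, combined with the a.e.\ identity and weak lower semicontinuity, forces the identity at $t_0$.
\end{itemize}

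Alternatively, the semigroup route of the references cited before the theorem (e.g.\ \cite{AKirsch_ARieder_2016a}) gives everything at once. The operator $(E,B)\mapsto(\varepsilon^{-1}\curl(\mu^{-1}B),-\curl E)$ on $\{(E,B):E\in H_0(\curl;\Omega),\ \mu^{-1}B\in H(\curl;\Omega)\}$ is skew-adjoint in the $\varepsilon,\mu^{-1}$-weighted $L^2$ space, and $\sigma$ is a bounded perturbation. Weak solutions in the sense of Definition~\ref{def:weak_soln} coincide with mild solutions (Ball's theorem), which are unique and lie in $C([0,T];L^2(\Omega)^3)$.
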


\section{Numerical method for the forward problem}
\label{sec:forward-num}

We develop a structure-preserving scheme for the solution of~\eqref{eq:MaxWeak} and analyze its convergence.
We refer to \cite{HAntil_2025a} for the semi-discrete case.
 
\subsection{Spatial discretization}

Let $\{\mathcal T_h\}_h$ denote a family of shape-regular meshes of $\Omega$, and fix a polynomial degree $k\ge 0$.

\begin{itemize}
  \item \textbf{N\'ed\'elec edge space (conforming to $H(\curl)$).}
  \[
    \mathcal N_h
    := \bigl\{ v_h\in H(\curl;\Omega)\;:\; v_h|_K \in \mathcal N_k(K)\ \text{for all }K\in\mathcal T_h \bigr\},
  \] 
  where $\mathcal N_k(K)$ refers to the local N\'ed\'elec space of the first kind with polynomial degree $k$~\cite[pg. 92]{boffibrezzifortinMixedFEM}. Moreover, we define
  \[\mathcal N_h^0 := \mathcal N_h\cap H_0(\curl;\Omega).\]

  \item \textbf{Raviart--Thomas face space (conforming to $H(\divv)$).}
  \[
    \mathcal{RT}_h
    := \bigl\{ w_h\in H(\divv;\Omega)\;:\; w_h|_K \in \mathcal{RT}_k(K)\ \text{for all }K\in\mathcal T_h \bigr\}.
  \]
    $\mathcal{RT}_k(K)$ refers to the local Raviart--Thomas space with polynomial degree $k$~\cite[pg. 85]{boffibrezzifortinMixedFEM}.

  \item \textbf{Elementwise polynomial scalars.}
  \[
    \mathcal Q_h
    := \bigl\{ q_h\in L^2(\Omega)\;:\; q_h|_K \in \mathbb P_k(K)\ \text{for all }K\in\mathcal T_h \bigr\}.
  \]
   where $\mathbb P_k(K)$ is the space of scalar polynomials on $K$ of degree at most $k$.
\end{itemize}

\paragraph{Discrete de Rham complex and commuting structure.}

The standard continuous sequence
\[
H_0^1(\Omega)\xrightarrow{\ \nabla\ } H_0(\curl;\Omega)\xrightarrow{\ \curl\ } H(\divv;\Omega)\xrightarrow{\ \divv\ } L^2(\Omega)\to 0
\]
is exact~\cite{phillips-shadid2018maxwell}. Its finite element counterpart reads
\[
\mathcal P_h \xrightarrow{\ \nabla\ } \mathcal N_h^0 \xrightarrow{\ \curl\ } \mathcal{RT}_h \xrightarrow{\ \divv\ } \mathcal Q_h \to 0,
\]
where $\mathcal P_h$ denotes the conforming Lagrange space of degree $k$. In particular,
\[
\curl\,\mathcal N_h^0 \subset \mathcal{RT}_h,
\qquad
\divv\bigl(\curl\,\mathcal N_h^0\bigr)=0.
\]

\paragraph{Elementwise $L^2$ projection.}
Let $\Pi_0:L^2(\Omega)^3\to\mathcal Q_h$ be the $L^2$–orthogonal projector (defined elementwise). It is $L^2$–stable:
\[
\|\Pi_0 v\|_{L^2(\Omega)} \le \|v\|_{L^2(\Omega)}
\qquad \forall\, v\in L^2(\Omega)^3.
\]

\subsection{Time discretization}

Let $N_t$ denote the number of time steps, $\Delta t:=T/N_t$, and $t^n:=n\,\Delta t$ for $0\le n\le N_t$.
Denote the time grid by $\mathcal T_t:=\{t^n\}_{n=0}^{N_t}$. For $g\in C([0,T];X)$ set $g^n:=g(t^n)$ and
$g^{\Delta t}:=\{g^n\}_{n=0}^{N_t}$. On sequences $g^{\Delta t}\subset X$ define
\[
  \|g^{\Delta t}\|_{\ell^\infty(X)}:=\max_{0\le n\le N_t}\|g^n\|_X,
  \qquad
  \|g^{\Delta t}\|_{\ell^2(X)}^2:=\sum_{{n=0}}^{N_t}\Delta t\,\|g^n\|_X^2.
\]
For $n=0,\dots,N_t-1$ we use the backward difference and the midpoint average
\[
  \delta_t g^{n+1}:=\frac{g^{n+1}-g^n}{\Delta t},
  \qquad
  g^{n+\frac12}:=\frac{g^{n+1}+g^n}{2}.
\]

\paragraph{Piecewise constant reconstructions.}
We identify a nodal sequence with a right–continuous piecewise constant function,
\[
  g^{\mathrm{pc}}(t):=g^n \quad \text{for } t\in(t^{n-1},t^n],\ \ n=1,\dots,N_t,
\]
so that $\|g^{\mathrm{pc}}\|_{L^\infty(0,T;X)}=\|g^{\Delta t}\|_{\ell^\infty(X)}$ and
$\|g^{\mathrm{pc}}\|_{L^2(0,T;X)}^2=\sum_{{n=0}}^{N_t}\Delta t\,\|g^n\|_X^2$.
Likewise, for midpoints define the (right–continuous) midpoint lift
\[
  (g^{1/2})(t):=g^{n+\frac12}\quad\text{for }t\in(t^n,t^{n+1}],\ \ n=0,\dots,N_t-1,
\]
with
\[
  \|g^{1/2}\|_{L^\infty(0,T;X)}=\max_{0\le n\le N_t-1}\|g^{n+\frac12}\|_X,
  \qquad
  \|g^{1/2}\|_{L^2(0,T;X)}^2=\sum_{n=0}^{N_t-1}\Delta t\,\|g^{n+\frac12}\|_X^2.
\]
We also lift the discrete time derivative as a piecewise constant function
\[
  (\partial_t^{\Delta t} g)(t):=\delta_t g^{n+1}\quad\text{for }t\in(t^n,t^{n+1}],\ \ n=0,\dots,N_t-1,
\]
so that $\|\,\partial_t^{\Delta t} g\,\|_{L^2(0,T;X)}^2=\sum_{n=0}^{N_t-1}\Delta t\,\|\delta_t g^{n+1}\|_X^2$.

\paragraph{Piecewise affine (linear) reconstruction.}
We also use the continuous, piecewise affine reconstruction $g^{\mathrm{pl}}:(0,T]\to X$ defined by
\[
  g^{\mathrm{pl}}(t):=g^n+(t-t^n)\,\delta_t g^{n+1}
  \qquad \text{for } t\in(t^n,t^{n+1}],\ \ n=0,\dots,N_t-1.
\]
Then $g^{\mathrm{pl}}\in C([0,T];X)$, $g^{\mathrm{pl}}(t^n)=g^n$, and
\[
  \partial_t g^{\mathrm{pl}}(t)=\delta_t g^{n+1}\quad\text{for }t\in(t^n,t^{n+1}],
  \qquad
  g^{\mathrm{pl}}(t^{n+\frac12})=g^{n+\frac12}.
\]
In particular,
\[
  \|\partial_t g^{\mathrm{pl}}\|_{L^2(0,T;X)}^2=\sum_{n=0}^{N_t-1}\Delta t\,\|\delta_t g^{n+1}\|_X^2.
\]

\subsection{Fully discrete Crank--Nicolson scheme}

For $f\in L^2(0,T;L^2(\Omega)^3)$ we define the Crank--Nicolson midpoint value on
$I_n=(t^n,t^{n+1})$ by the cell average
\[
  f^{\,n+\frac12}
  := \frac{1}{\Delta t}\int_{t^n}^{t^{n+1}} f(s)\,ds,
  \qquad n=0,\dots,N_t-1.
\]

\paragraph{Fully discrete CN step.}
Given $(E_h^n,B_h^n)\in \mathcal N_h^0\times \mathcal{RT}_h$, find $(E_h^{n+1},B_h^{n+1})\in \mathcal N_h^0\times \mathcal{RT}_h$ such that
for all $(\psi_h,\phi_h)\in \mathcal N_h^0\times \mathcal{RT}_h$,
\begin{subequations}\label{eq:CN}
\begin{align}
(\varepsilon\,\delta_t E_h^{n+1},\psi_h)
+(\sigma\, E_h^{n+\frac12},\psi_h)
-(\mu^{-1} B_h^{n+\frac12},\curl\,\psi_h)
&=(f^{\,n+\frac12},\psi_h),\label{eq:CN_ampere}\\
(\delta_t B_h^{n+1},\phi_h)
+(\curl\, E_h^{n+\frac12},\phi_h)&=0,\label{eq:CN_faraday}
\end{align}
\end{subequations}
for $n=0,\dots,N_t-1$.

\paragraph{Initial data.}
We take
\begin{equation}\label{eq:initialdiscdata}
E_h(0)=Q_h^N E_0 \in \mathcal N_h^0,\qquad
B_h(0)\in \mathcal{RT}_h\ \text{with}\ (\divv B_h(0),q_h)=0\ \ \forall q_h\in \mathcal Q_h.
\end{equation}
Here, $Q_h^N: L^2(\Omega)^3\to \mathcal N_h^0$ is the $L^2$–orthogonal projector, so that
$\|E_h(0)-E_0\|_{L^2(\Omega)}\!\to 0$ and 
choosing $B_h(0)$ as the $L^2$–orthogonal projection of
$B_0$ onto the discrete divergence–free subspace
\(Z_h := \{v_h\in\mathcal{RT}_h:\ \divv v_h=0\ \text{in }\mathcal Q_h\}\)
gives $\|B_h(0)-B_0\|_{L^2(\Omega)}\!\to 0$ as $h\to0$. See \cite[Lemma~7 \& 8]{HAntil_2025a} for the
convergence of $E_h(0)$ and $B_h(0)$ in $L^2(\Omega)^3$.

\begin{lemma}[CN well-posedness; energy identity; stability for nonnegative $\sigma$]\label{lem:CN-wp}
Let Assumption~\ref{ass:data} hold.
Let the discrete energy be
\begin{equation}\label{eq:disc_energy}
\mathcal{E}^n
:=\tfrac12\Big(\|\varepsilon^\frac12\,E_h^{n}\|_{L^2(\Omega)}^2
+\|\mu^{-\frac12}\,B_h^{n}\|_{L^2(\Omega)}^2\Big).
\end{equation}
Given $(E_h^n,B_h^n)\in \mathcal N_h^0\times \mathcal{RT}_h$, problem \eqref{eq:CN} admits a unique solution
$(E_h^{n+1},B_h^{n+1})\in \mathcal N_h^0\times \mathcal{RT}_h$, and the solution depends continuously on the data
$(E_h^n,B_h^n,f^{n+\frac12})$.

Moreover, for the energy \eqref{eq:disc_energy} the \emph{exact balance}
\begin{equation}\label{eq:CN_energy_identity}
\frac{\mathcal{E}^{n+1}-\mathcal{E}^{n}}{\Delta t}
\;+\;\|\sqrt{\sigma}\,E_h^{n+\frac12}\|_{L^2(\Omega)}^2
\;=\;
\big(f^{\,n+\frac12},\,E_h^{n+\frac12}\big) 
\end{equation}
holds for every $n$. In particular, if $f\equiv0$ then the scheme is energy-dissipative for $\sigma\ge0$ (and strictly conservative when $\sigma\equiv0$).

Finally, there exists a constant $C_T>0$ depending only on $T$, the ellipticity bounds of $\varepsilon,\mu$, and $\|\varepsilon\|_{L^\infty}$,$\|\mu^{-1}\|_{L^\infty}$ (but not on $h,\Delta t$), such that for all $m=1,\dots,N_t$,
\begin{equation}\label{eq:CN_stability_eps}
\mathcal{E}^{m}
\;+\;\Delta t\,\sum_{n=0}^{m-1}\|\sqrt{\sigma}\,E_h^{n+\frac12}\|_{L^2(\Omega)}^2
\;\le\;
C_T\Bigg(\,
\mathcal{E}^{0}
\;+\;\sum_{n=0}^{m-1}\Delta t\,\big\|\varepsilon^{-1/2}f^{\,n+\frac12}\big\|_{L^2(\Omega)}^{2}
\Bigg).
\end{equation}
\end{lemma}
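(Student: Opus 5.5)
We argue in three stages: well-posedness of one step, the energy identity, and a discrete Gronwall bound.

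\emph{Well-posedness.} For fixed $n$, \eqref{eq:CN} is a square linear system in the finite-dimensional unknown $(E_h^{n+1},B_h^{n+1})\in\mathcal N_h^0\times\mathcal{RT}_h$. It therefore suffices to show that the homogeneous problem has only the trivial solution, i.e.\ the one with $E_h^n=B_h^n=0$ and $f^{n+\frac12}=0$. In that case $E_h^{n+\frac12}=E^{n+1}_h/2$ and $B_h^{n+\frac12}=B^{n+1}_h/2$.

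Test \eqref{eq:CN_ampere} with $\psi_h=E_h^{n+1}$. Test \eqref{eq:CN_faraday} with $\phi_h=\mu^{-1}B_h^{n+1}$; this needs care, since $\mu^{-1}B_h$ need not lie in $\mathcal{RT}_h$. The way around it is to use $\curl\mathcal N_h^0\subset\mathcal{RT}_h$: \eqref{eq:CN_faraday} holds for all $\phi_h\in\mathcal{RT}_h$, and both $\delta_tB_h^{n+1}$ and $\curl E_h^{n+\frac12}$ lie in $\mathcal{RT}_h$. Hence \eqref{eq:CN_faraday} holds strongly, $\delta_t B_h^{n+1}=-\curl E_h^{n+\frac12}$, and we may test it against any $L^2$ function, in particular $\mu^{-1}B_h^{n+\frac12}$.

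Adding the two tested equations, the curl coupling terms $-(\mu^{-1}B,\curl E)+(\curl E,\mu^{-1}B)$ cancel, leaving
\[
\tfrac{1}{\Delta t}\|\varepsilon^{1/2}E^{n+1}_h\|^2+\tfrac{1}{\Delta t}\|\mu^{-1/2}B^{n+1}_h\|^2+\tfrac12\|\sqrt\sigma E^{n+1}_h\|^2=0.
\]
Uniform ellipticity of $\varepsilon$ and $\mu$ then gives $E_h^{n+1}=0$ and $B_h^{n+1}=0$. Continuous dependence on $(E_h^n,B_h^n,f^{n+\frac12})$ is automatic for an invertible linear map in finite dimensions. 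Alternatively, it follows from the energy estimate below.

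\emph{Energy identity.} Use the same tests at the midpoint: $\psi_h=E_h^{n+\frac12}$ in \eqref{eq:CN_ampere}, and $\mu^{-1}B_h^{n+\frac12}$ in the strong form of \eqref{eq:CN_faraday}. By symmetry of $\varepsilon$,
\[
(\varepsilon\,\delta_tE^{n+1}_h,E^{n+\frac12}_h)=\frac{1}{2\Delta t}\Big(\|\varepsilon^{1/2}E^{n+1}_h\|^2-\|\varepsilon^{1/2}E^n_h\|^2\Big),
\]
and by symmetry of $\mu^{-1}$ the analogous identity holds for $B$. Adding the two equations, the curl terms cancel again, and \eqref{eq:CN_energy_identity} follows exactly. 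When $f\equiv0$ the right-hand side vanishes, so positive semidefiniteness of $\sigma$ gives dissipation, and $\sigma\equiv0$ gives exact conservation.

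\emph{Stability.} Bound the right-hand side of \eqref{eq:CN_energy_identity} by Cauchy--Schwarz and Young:
\[
(f^{n+\frac12},E^{n+\frac12}_h)\le\|\varepsilon^{-1/2}f^{n+\frac12}\|\,\|\varepsilon^{1/2}E^{n+\frac12}_h\|\le\tfrac12\|\varepsilon^{-1/2}f^{n+\frac12}\|^2+\tfrac12(\mathcal E^n+\mathcal E^{n+1}),
\]
using $\|\varepsilon^{1/2}E^{n+\frac12}_h\|^2\le\mathcal E^n+\mathcal E^{n+1}$. Here $\varepsilon^{-1/2}$ is bounded because $\varepsilon$ is uniformly elliptic. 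Multiply by $\Delta t$ and sum from $n=0$ to $m-1$. The resulting inequality carries the implicit term $\tfrac{\Delta t}{2}\mathcal E^m$ on the right-hand side.

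This implicit term is the main technical point. For $\Delta t\le1$ we absorb it into the left-hand side and then apply the discrete Gronwall lemma. This gives \eqref{eq:CN_stability_eps} with $C_T\sim e^{cT}$, for a constant $c$ independent of $h$ and $\Delta t$.

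The case $\Delta t>1$ involves only finitely many steps, since $\Delta t=T/N_t$ forces $N_t<T$. It is handled by a crude per-step bound, or simply by assuming $\Delta t\le\Delta t_0$.

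Finally, the dissipation sum $\Delta t\sum\|\sqrt\sigma E^{n+\frac12}_h\|^2$ is retained on the left-hand side throughout, which yields the stated form of \eqref{eq:CN_stability_eps}.
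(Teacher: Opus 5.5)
Your well-posedness and energy-identity arguments are correct and essentially match the paper's. The only difference is cosmetic. You use that $\delta_t B_h^{n+1}$ and $\curl E_h^{n+\frac12}$ both lie in $\mathcal{RT}_h$ to get the strong identity $\delta_t B_h^{n+1}=-\curl E_h^{n+\frac12}$ (as in the proof of Lemma~\ref{lem:disc_gauss}), and then test with $\mu^{-1}B_h^{n+\frac12}$. The paper instead tests with $P_h(\mu^{-1}B_h^{n+\frac12})$ and removes $P_h$ by orthogonality. Both devices rest on $\curl\mathcal N_h^0\subset\mathcal{RT}_h$.

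The stability step has a genuine loose end. Because you fix the Young weight, your one-step inequality reads $(1-\tfrac{\Delta t}{2})\mathcal E^{n+1}+\Delta t\|\sqrt\sigma E_h^{n+\frac12}\|^2\le(1+\tfrac{\Delta t}{2})\mathcal E^{n}+\tfrac{\Delta t}{2}\|\varepsilon^{-1/2}f^{n+\frac12}\|^2$. For $\Delta t$ near $2$ this gives only the non-uniform factor $\frac{2+\Delta t}{2-\Delta t}$, and for $\Delta t\ge 2$ it gives no information on $\mathcal E^{n+1}$ at all. So the ``crude per-step bound'' you invoke for $\Delta t>1$ cannot be extracted from it, and you do not supply another. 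Assuming $\Delta t\le\Delta t_0$ proves a weaker result, since \eqref{eq:CN_stability_eps} is asserted for every $\Delta t=T/N_t\in(0,T]$ with $C_T$ independent of $\Delta t$.

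The missing idea is to tie the Young weight to $T$. Bound $(f^{n+\frac12},E_h^{n+\frac12})\le\frac{1}{2\gamma}\|\varepsilon^{-1/2}f^{n+\frac12}\|^2+\frac{\gamma}{2}(\mathcal E^n+\mathcal E^{n+1})$ with $\gamma=1/(2T)$. Then $\alpha:=\gamma\Delta t/2=\Delta t/(4T)\le\frac14$ for every admissible step. With $S_n:=\mathcal E^n+\sum_{k<n}\Delta t\|\sqrt\sigma E_h^{k+\frac12}\|^2$, one gets $(1-\alpha)S_{n+1}\le(1+\alpha)S_n+\frac{\Delta t}{2\gamma}\|\varepsilon^{-1/2}f^{n+\frac12}\|^2$. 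Iterating with $\rho=\frac{1+\alpha}{1-\alpha}\le e^{4\alpha}$ gives $\rho^m\le e^{t_m/T}\le e$. This yields \eqref{eq:CN_stability_eps} with $C_T=e\max\{1,2T\}$ and no step-size restriction, which is exactly how the paper argues.

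Your finitely-many-steps idea can also be rescued, but it needs an explicit choice. The step-dependent weight $\gamma=1/\Delta t$ gives $\mathcal E^{n+1}\le 3\mathcal E^n+T\Delta t\,\|\varepsilon^{-1/2}f^{n+\frac12}\|^2$, hence a factor at most $3^{T}$ over the $N_t<T$ steps.
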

\begin{proof}
\emph{Existence and uniqueness.}
Equations~\eqref{eq:CN} define a square linear system. To prove that a solution exists and that it is unique, it is sufficient to show that the system operator is injective.  We use a proof by contradiction.
Let two solutions $(E_1^{n+1}, B_1^{n+1})$ be given for the same data $(E^n_h,B_h^n,f^{n+\frac12})$. Set the differences
\[
\widehat{E}^{n+1} := E_1^{n+1} - E_2^{n+1} , 
\quad 
\widehat{B}^{n+1} := B_1^{n+1} - B_2^{n+1} , 
\]
and note that $\widehat E^{n}=\widehat B^{n}=0$ (the $n$-level values are given and equal). The difference satisfies the homogeneous CN system
for $(\widehat E^{n+1},\widehat B^{n+1})$:
\begin{subequations}
\begin{align}
\Big(\varepsilon\,\frac{\widehat E^{n+1}-\widehat E^{n}}{\Delta t},\,\psi_h\Big)
+(\sigma \widehat E^{n+\frac12},\psi_h)
-(\mu^{-1}\widehat B^{n+\frac12},\mathrm{curl}\,\psi_h)
&=0 \quad \forall\,\psi_h\in \mathcal N_h^0, \label{eq:CN-hom-A}\\
\Big(\frac{\widehat B^{n+1}-\widehat B^{n}}{\Delta t},\,\phi_h\Big)
+(\mathrm{curl}\,\widehat E^{n+\frac12},\phi_h)&=0 \quad \forall\,\phi_h\in \mathcal{RT}_h. 
\label{eq:CN-hom-F}
\end{align}
\end{subequations}
In view of the definition of $L^2$-orthogonal projection $P_h$ (see Proposition~\ref{prop:L2proj-RT} for details), choose the specific test functions $\psi_h=\widehat E^{n+\frac12}:=\tfrac12(\widehat E^{n+1}+\widehat E^{n})=\tfrac12\widehat E^{n+1}$ (since $\widehat E^{n}=0$)
and $\phi_h=P_h\left(\mu^{-1}\widehat B^{n+\frac12}\right):=P_h\left(\mu^{-1}\tfrac12(\widehat B^{n+1}+\widehat B^{n})\right)
=\tfrac12\mu^{-1}\widehat B^{n+1}$ (since $\widehat B^{n}=0$).
Then \eqref{eq:CN-hom-A} gives
\begin{equation}\label{eq:uniq-A}
\Big(\varepsilon\,\frac{\widehat E^{n+1}-0}{\Delta t},\,\tfrac12\widehat E^{n+1}\Big)
+(\sigma \tfrac12\widehat E^{n+1},\tfrac12\widehat E^{n+1})
-(\mu^{-1}\tfrac12\widehat B^{n+1},\mathrm{curl}\,\tfrac12\widehat E^{n+1})=0,
\end{equation}
i.e.,
\begin{equation}\label{eq:uniq-A2}
\frac{1}{2\Delta t}\|\sqrt{\varepsilon}\,\widehat E^{n+1}\|_{L^2(\Omega)}^2
+\frac14\|\sqrt{\sigma}\,\widehat E^{n+1}\|_{L^2(\Omega)}^2
-\frac14(\mu^{-1}\widehat B^{n+1},\mathrm{curl}\,\widehat E^{n+1})=0.
\end{equation}
Similarly, \eqref{eq:CN-hom-F} with $\phi_h=P_h\left(\mu^{-1}\widehat B^{n+\frac12}\right)$ gives
\begin{equation}\label{eq:uniq-F0}
\Big(\frac{\widehat B^{n+1}-0}{\Delta t},\,\mu^{-1}\tfrac12\widehat B^{n+1}\Big)
+(\mathrm{curl}\,\widehat E^{n+\frac12},P_h(\mu^{-1}\widehat B^{n+\frac12}))=0,
\end{equation}
Since $\mathrm{curl}\,\widehat E^{n+\frac12} \in \RT_h$, from the orthogonality property of $P_h$ (see Proposition~\ref{prop:L2proj-RT}),  \eqref{eq:uniq-F0} is equivalent to 
\begin{equation}\label{eq:uniq-F}
\Big(\frac{\widehat B^{n+1}-0}{\Delta t},\,\mu^{-1}\tfrac12\widehat B^{n+1}\Big)
+(\mathrm{curl}\,\widehat E^{n+\frac12},\mu^{-1}\widehat B^{n+\frac12})=0,
\end{equation}
i.e.,
\begin{equation}\label{eq:uniq-F2}
\frac{1}{2\Delta t}\|\sqrt{\mu^{-1}}\,\widehat B^{n+1}\|_{L^2(\Omega)}^2
+(\mathrm{curl}\,\tfrac12\widehat E^{n+1},\mu^{-1}\tfrac12\widehat B^{n+1})=0.
\end{equation}
Hence the second term in \eqref{eq:uniq-F2} cancels the last term in
\eqref{eq:uniq-A2} upon summation. Adding \eqref{eq:uniq-A2} and \eqref{eq:uniq-F2} we obtain
\[
\frac{1}{2\Delta t}\|\sqrt{\varepsilon}\,\widehat E^{n+1}\|^2_{L^2(\Omega)}
+\frac14\|\sqrt{\sigma}\,\widehat E^{n+1}\|^2_{L^2(\Omega)}
+\frac{1}{2\Delta t}\|\sqrt{\mu^{-1}}\,\widehat B^{n+1}\|^2_{L^2(\Omega)}
=0.
\]
Since $\varepsilon,\mu^{-1}$ are uniformly elliptic, all three terms are nonnegative; hence they must vanish. In particular $\widehat E^{n+1}=0$ and $\widehat B^{n+1}=0$, yielding a unique pair $(E_h^{n+1},B_h^{n+1})\in \mathcal N_h^0\times \mathcal{RT}_h$.

\emph{Energy identity.}
We test \eqref{eq:CN_ampere} with $\psi_h=E_h^{n+\frac12} \in \mathcal N_h^0 $ and \eqref{eq:CN_faraday} with
$\phi_h=P_h\left(\mu^{-1}B_h^{n+\frac12}\right) \in \mathcal{RT}_h$. Using the algebraic identities, valid for any symmetric positive definite matrix~$M$ and vectors~$u,v$,
\[
\Big(M\frac{u-v}{\Delta t},\,\frac{u+v}{2}\Big)
=\frac{1}{2\Delta t}\Big(\|\sqrt{M}u\|_{L^2(\Omega)}^2-\|\sqrt{M}v\|_{L^2(\Omega)}^2\Big),
\qquad
(Ma,a)=\|\sqrt{M}a\|^2,
\]
we obtain
\begin{subequations}
\begin{align}
\frac{1}{2\Delta t}\!\Big(\|\sqrt{\varepsilon}E_h^{n+1}\|_{L^2(\Omega)}^2-\|\sqrt{\varepsilon}E_h^{n}\|_{L^2(\Omega)}^2\Big)
+ \|\sqrt{\sigma}E_h^{n+\frac12}\|_{L^2(\Omega)}^2
-(\mu^{-1}B_h^{n+\frac12},\mathrm{curl}\,E_h^{n+\frac12})
&=(f^{n+\frac12},E_h^{n+\frac12}),
\label{eq:en-A}\\
\frac{1}{2\Delta t}\!\Big(\|\sqrt{\mu^{-1}}B_h^{n+1}\|_{L^2(\Omega)}^2-\|\sqrt{\mu^{-1}}B_h^{n}\|_{L^2(\Omega)}^2\Big)
+ (\mathrm{curl}\, E_h^{n+\frac12},\mu^{-1}B_h^{n+\frac12})
&=0,
\label{eq:en-F}
\end{align}
\end{subequations}
where \eqref{eq:en-F} is due to the definition of $P_h$, giving 
$(\mathrm{curl}\, E_h^{n+\frac12},P_h(\mu^{-1}B_h^{n+\frac12})) = (\mathrm{curl}\, E_h^{n+\frac12},\mu^{-1}B_h^{n+\frac12})$.
Summing \eqref{eq:en-A} and \eqref{eq:en-F} yields the \emph{per-step CN energy balance} \eqref{eq:CN_energy_identity}
\begin{equation}\label{eq:en-balance}
\frac{\mathcal{E}^{\,n+1}-\mathcal{E}^{\,n}}{\Delta t}
+\|\sqrt{\sigma}\,E_h^{n+\frac12}\|_{L^2(\Omega)}^2
= (f^{n+\frac12},E_h^{n+\frac12}) . 
\end{equation}
In particular, if $f\equiv0$, then $\mathcal E^{n+1}\le \mathcal E^n$ (and equality when $\sigma\equiv0$).

\emph{Stability for $\sigma\ge0$ (discrete Gr\"onwall's inequality on energy dissipation).}
From \eqref{eq:CN_energy_identity} and the Cauchy--Schwarz/Young inequalities with a parameter $\gamma>0$,
\[
\frac{\mathcal{E}^{n+1}-\mathcal{E}^{n}}{\Delta t}
+\|\sqrt{\sigma}E_h^{n+\frac12}\|_{L^2(\Omega)}^2
\;\le\;
\frac{1}{2\gamma}\|\varepsilon^{-1/2}f^{n+\frac12}\|_{L^2(\Omega)}^2
+\frac{\gamma}{2}\,\|\sqrt{\varepsilon}E_h^{n+\frac12}\|_{L^2(\Omega)}^2.
\]
Using
\(
\|\sqrt\varepsilon E_h^{n+\frac12}\|_{L^2(\Omega)}^2
=\tfrac12\big(\|\sqrt\varepsilon E_h^{n+1}\|_{L^2(\Omega)}^2+\|\sqrt\varepsilon E_h^{n}\|_{L^2(\Omega)}^2\big)
-\tfrac14\|\sqrt\varepsilon(E_h^{n+1}-E_h^{n})\|_{L^2(\Omega)}^2
\le \mathcal E^{n+1}+\mathcal E^{n},
\)
we obtain 
\[
\frac{\mathcal{E}^{n+1}-\mathcal{E}^{n}}{\Delta t}
+\|\sqrt{\sigma}E_h^{n+\frac12}\|_{L^2(\Omega)}^2
\;\le\;
\frac{\gamma}{2}\big(\mathcal E^{n+1}+\mathcal E^{n}\big)
+\frac{1}{2\gamma}\|\varepsilon^{-1/2}f^{n+\frac12}\|_{L^2(\Omega)}^2.
\]
After multiplying by $\Delta t$ and rearranging,
\[
\bigl(1-\tfrac{\gamma\Delta t}{2}\bigr)\,\mathcal E^{n+1}
\;+\;\Delta t\,\|\sqrt{\sigma}E_h^{n+\frac12}\|_{L^2(\Omega)}^2
\;\le\;
\bigl(1+\tfrac{\gamma\Delta t}{2}\bigr)\,\mathcal E^{n}
+\frac{\Delta t}{2\gamma}\,\|\varepsilon^{-1/2}f^{n+\frac12}\|_{L^2(\Omega)}^2.
\]
Let $\alpha:=\gamma\Delta t/2$. Starting from
\[
(1-\alpha)\,\mathcal E^{n+1}
+\Delta t\,\|\sqrt{\sigma}E_h^{n+\frac12}\|_{L^2(\Omega)}^2
\le
(1+\alpha)\,\mathcal E^{n}
+\frac{\Delta t}{2\gamma}\,\|\varepsilon^{-1/2}f^{n+\frac12}\|_{L^2(\Omega)}^2,
\]
we add $\sum_{k=0}^{n-1}\Delta t\,\|\sqrt{\sigma}E_h^{k+\frac12}\|_{L^2(\Omega)}^2$ to both sides and define
\[
S_n:=\mathcal E^{\,n}+\sum_{k=0}^{n-1}\Delta t\,\|\sqrt{\sigma}E_h^{k+\frac12}\|_{L^2(\Omega)}^2
\qquad (S_0=\mathcal E^0).
\]
Since $\sum_{k=0}^{n}\Delta t\,\|\sqrt{\sigma}E_h^{k+\frac12}\|_{L^2(\Omega)}^2\ge0$, we have
\[
(1-\alpha)\,\mathcal E^{n+1}+\sum_{k=0}^{n}\Delta t\,\|\sqrt{\sigma}E_h^{k+\frac12}\|_{L^2(\Omega)}^2
=(1-\alpha)\,S_{n+1}+\alpha\sum_{k=0}^{n}\Delta t\,\|\sqrt{\sigma}E_h^{k+\frac12}\|_{L^2(\Omega)}^2
\ge (1-\alpha)\,S_{n+1},
\]
and clearly
\[
(1+\alpha)\,\mathcal E^{n}+\sum_{k=0}^{n-1}\Delta t\,\|\sqrt{\sigma}E_h^{k+\frac12}\|_{L^2(\Omega)}^2
\le (1+\alpha)\,S_n.
\]
Therefore,
\[
(1-\alpha)\,S_{n+1}
\ \le\
(1+\alpha)\,S_n
+\frac{\Delta t}{2\gamma}\,\|\varepsilon^{-1/2}f^{n+\frac12}\|_{L^2(\Omega)}^2.
\]
We divide by $(1-\alpha)$ and set $\rho:=\frac{1+\alpha}{1-\alpha}$, $c:=\frac{1}{2\gamma(1-\alpha)}$, to get
\[
S_{n+1}\ \le\ \rho\,S_n\ +\ c\,\Delta t\,\|\varepsilon^{-1/2}f^{n+\frac12}\|_{L^2(\Omega)}^2.
\]
We multiply by $\rho^{-(n+1)}$ and sum $n=0,\dots,m-1$; since the telescoping sum
$\sum_{n=0}^{m-1}\big(\rho^{-(n+1)}S_{n+1}-\rho^{-n}S_n\big)$ reduces to $\rho^{-m}S_m-S_0$, we obtain
\[
\rho^{-m}S_m - S_0 \ \le\ c\sum_{n=0}^{m-1}\rho^{-(n+1)}\Delta t\,\|\varepsilon^{-1/2}f^{n+\frac12}\|_{L^2(\Omega)}^2
\ \le\ c\sum_{n=0}^{m-1}\Delta t\,\|\varepsilon^{-1/2}f^{n+\frac12}\|_{L^2(\Omega)}^2.
\]
Hence
\[
S_m\ \le\ \rho^{\,m}\Big(S_0 + c\sum_{n=0}^{m-1}\Delta t\,\|\varepsilon^{-1/2}f^{n+\frac12}\|_{L^2(\Omega)}^2\Big).
\]
By choosing $\gamma:=1/(2T)$ so that $\alpha=\Delta t/(4T)\in(0,\tfrac12),$ we may use the inequality
$\rho=\frac{1+\alpha}{1-\alpha}\le e^{4\alpha}$, which can be verified by a Taylor expansion of $\log\frac{1+\alpha}{1-\alpha}$, whence
$\rho^{\,m}\le e^{4\alpha m}=e^{2\gamma t_m}\le e^{2\gamma T}$.
Also note that $1-\alpha\ge\tfrac12$, so $c\le 1/\gamma$.
Therefore
\[
\mathcal E^{\,m}+\sum_{n=0}^{m-1}\Delta t\,\|\sqrt{\sigma}E_h^{n+\frac12}\|_{L^2(\Omega)}^2
\ \le\
e^{2\gamma T}\Big(\mathcal E^0+\tfrac{1}{\gamma}\sum_{n=0}^{m-1}\Delta t\,\|\varepsilon^{-1/2}f^{n+\frac12}\|_{L^2(\Omega)}^2\Big).
\]
With $\gamma=1/(2T)$ this gives
\[
\mathcal E^{\,m}+\sum_{n=0}^{m-1}\Delta t\,\|\sqrt{\sigma}E_h^{n+\frac12}\|_{L^2(\Omega)}^2
\ \le\
e\,\Big(\mathcal E^0+2T\sum_{n=0}^{m-1}\Delta t\,\|\varepsilon^{-1/2}f^{n+\frac12}\|_{L^2(\Omega)}^2\Big),
\]
which, after absorbing $e$ into $C_T$, is the bound given by~\eqref{eq:CN_stability_eps}.
\end{proof}
Next we study the discrete divergence of $B_h$. We write
\[
\mathrm{div}_h := \mathrm{div}\big|_{\mathcal{RT}_h}: \mathcal{RT}_h \to \mathcal{Q}_h,
\]
and stress that $\mathrm{div}_h$ is \emph{only} applied to functions in $\mathcal{RT}_h$; it is not defined on arbitrary $L^2(\Omega)^3$ fields.

\begin{lemma}[Discrete Gauss law for $B$]
\label{lem:disc_gauss}
Let $\{(E_h^{n},B_h^{n})\}$ solve the fully discrete system
\eqref{eq:CN_ampere} and \eqref{eq:CN_faraday}. 
Then the discrete divergence evolves in $\mathcal{Q}_h$ according to
\begin{equation}\label{eq:disc_div_update}
\delta_t\big(\mathrm{div}_h B_h^{n+1}\big) + \mathrm{div}_h\big(\mathrm{curl}\,E_h^{n+\frac12}\big)=0 \quad\text{in } \mathcal{Q}_h, 
\end{equation}
which implies that $\delta_t(\mathrm{div}_h B_h^{n+1})=0$ in $\mathcal{Q}_h$. In particular, if the initial data satisfy $\mathrm{div}_h B_h^{0}=0$ in $\mathcal{Q}_h$, then $\mathrm{div}_h B_h^{n}=0$ in $\mathcal{Q}_h$ for all $n$.
\end{lemma}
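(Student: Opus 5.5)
The plan is to read the discrete Faraday law \eqref{eq:CN_faraday} as an identity in $\mathcal{RT}_h$ rather than only in variational form. Both $\delta_t B_h^{n+1}=(B_h^{n+1}-B_h^n)/\Delta t$ and $\curl\,E_h^{n+\frac12}$ belong to $\mathcal{RT}_h$. The first is a difference of elements of $\mathcal{RT}_h$. For the second, $E_h^{n+\frac12}\in\mathcal N_h^0$ and the discrete de~Rham complex gives $\curl\,\mathcal N_h^0\subset\mathcal{RT}_h$. Hence $w_h:=\delta_t B_h^{n+1}+\curl\,E_h^{n+\frac12}\in\mathcal{RT}_h$, and \eqref{eq:CN_faraday} says $(w_h,\phi_h)=0$ for all $\phi_h\in\mathcal{RT}_h$. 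Choosing $\phi_h=w_h$ gives $\|w_h\|_{L^2(\Omega)}^2=0$, so the strong identity
\[
\delta_t B_h^{n+1}+\curl\,E_h^{n+\frac12}=0\quad\text{in }\mathcal{RT}_h
\]
holds. No projection $P_h$ or material weight enters here, since the test space and the trial space coincide in this equation.

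Next, I would apply $\divv_h:\mathcal{RT}_h\to\mathcal Q_h$ to this identity. By linearity, $\divv_h(\delta_t B_h^{n+1})=\delta_t(\divv_h B_h^{n+1})$, and the result is exactly \eqref{eq:disc_div_update}. The complex property $\divv(\curl\,\mathcal N_h^0)=0$ then makes the second term vanish, so $\delta_t(\divv_h B_h^{n+1})=0$. Equivalently, $\divv_h B_h^{n+1}=\divv_h B_h^n$ for every $n=0,\dots,N_t-1$. A simple induction on $n$ then shows that $\divv_h B_h^n=\divv_h B_h^0$ for all $n$, which is zero under the initial choice \eqref{eq:initialdiscdata}, that is, $B_h(0)\in Z_h$.

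The only delicate step is the first one: upgrading the weak equation to a strong equality. Everything rests on observing that all terms of \eqref{eq:CN_faraday} lie in the test space $\mathcal{RT}_h$, which depends on the inclusion $\curl\,\mathcal N_h^0\subset\mathcal{RT}_h$. One should also note that $\divv_h B_h^0=0$ holds pointwise in $\mathcal Q_h$, not merely weakly, because $\divv B_h(0)\in\mathcal Q_h$ is orthogonal to all of $\mathcal Q_h$.
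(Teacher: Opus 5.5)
Your proposal is correct and follows essentially the same argument as the paper. The paper also shows that the Faraday residual $\delta_t B_h^{n+1}+\curl\,E_h^{n+\frac12}$ lies in $\mathcal{RT}_h\cap\mathcal{RT}_h^\perp=\{0\}$, then applies $\divv_h$ and uses $\divv(\curl\,\mathcal N_h^0)=0$; your test choice $\phi_h=w_h$ is just the explicit form of that intersection argument.
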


\begin{proof}
Let $r_h := \delta_t B_h^{n+1} + \mathrm{curl}\,E_h^{n+\frac12}$.
From \eqref{eq:CN_faraday}, we have that 
\[
(r_h , \phi_h) = 0, \quad \forall \phi_h \in \mathcal{RT}_h , 
\]
i.e., $r_h \in \mathcal{RT}_h^\perp$ (with respect to the $L^2$ inner product).
Since $B_h^{n},B_h^{n+1}\in \mathcal{RT}_h$, by definition $\delta_t B_h^{n+1}\in \mathcal{RT}_h$.
From the sequence $\mathcal N_h^0 \xrightarrow{\ \curl\ } \mathcal{RT}_h \xrightarrow{\ \divv\ } \mathcal Q_h$, we have $\curl \mathcal N_h^0 \subset \mathcal{RT}_h$.
Therefore $\curl E_h^{n+\frac12}\in \mathcal{RT}_h$ implying $r_h \in \mathcal{RT}_h$.
Thus,
\[
r_h \in \mathcal{RT}_h \cap \mathcal{RT}_h^\perp ,
\]
whence $r_h = 0$ in $\mathcal{RT}_h$.
Applying the divergence operator $\mathrm{div}_h:\mathcal{RT}_h\to Q_h$ to $r_h$,
we get \eqref{eq:disc_div_update} in $Q_h$. Whence $\delta_t(\mathrm{div}_h B_h^{n+1})=0$ in $Q_h$.
Clearly, if $\mathrm{div}_h B_h^0=0$, then $\mathrm{div}_h B_h^n=0$ in $\mathcal{Q}_h$ for all $n$.
\end{proof}

\begin{lemma}[Discrete electric Gauss law for $E$]
\label{lem:fully_disc_gauss_E}
Let $(E_h^{n},B_h^{n})$ solve the fully-discrete system \eqref{eq:CN_ampere} and \eqref{eq:CN_faraday}. Define
the discrete divergence functional
\[
\langle \divv_h(\varepsilon E_h^{n+1}), q_h \rangle_{\mathcal P_h^*,\mathcal P_h}
:=-(\varepsilon E_h^{n+1},\nabla q_h)
\qquad \forall q_h\in \mathcal P_h.
\]
Similarly, define
\[
\langle \divv_h(\sigma E_h^{n+\frac12}-f^{n+\frac12}), q_h \rangle_{\mathcal P_h^*,\mathcal P_h}
:=-(\sigma E_h^{n+\frac12}-f^{n+\frac12},\nabla q_h)
\qquad \forall q_h\in \mathcal P_h.
\]
Then, for all $n$,
\[
\delta_t \divv_h(\varepsilon E_h^{n+1})
+
\divv_h(\sigma E_h^{n+\frac12} - f^{\,n+\frac12})
=0
\qquad\text{in }\mathcal P_h^*.
\]
If moreover,
\[
\divv_h(\sigma E_h^{n+\frac12} - f^{\,n+\frac12})=0
\qquad\text{in }\mathcal P_h^*, \quad \text{for all } n
\]
and
\[
\divv_h(\varepsilon E_h^0)=0
\qquad\text{in }\mathcal P_h^*,
\]
then
\[
\divv_h(\varepsilon E_h^{n})=0
\qquad\text{in }\mathcal P_h^*
\quad\text{for all } n.
\]
\end{lemma}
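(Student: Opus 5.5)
The plan is to test the discrete Amp\`ere equation \eqref{eq:CN_ampere} with discrete gradients, which is the discrete counterpart of taking the divergence of \eqref{eq:max1}. The key structural fact is the first inclusion in the discrete de~Rham complex, $\nabla\mathcal P_h\subset\mathcal N_h^0$. Here $\mathcal P_h$ must be understood with homogeneous boundary values, i.e.\ $\mathcal P_h\subset H_0^1(\Omega)$, consistent with the continuous sequence starting at $H_0^1(\Omega)$. Then the exactness $\curl\nabla q_h=0$ holds for every $q_h\in\mathcal P_h$.

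\emph{Step 1: testing with gradients.} Fix $q_h\in\mathcal P_h$ and take $\psi_h=\nabla q_h\in\mathcal N_h^0$ in \eqref{eq:CN_ampere}. The curl term $(\mu^{-1}B_h^{n+\frac12},\curl\nabla q_h)$ vanishes identically, leaving
\[
(\varepsilon\,\delta_t E_h^{n+1},\nabla q_h)+(\sigma E_h^{n+\frac12}-f^{n+\frac12},\nabla q_h)=0 .
\]
No information about $B_h$ is needed, so the argument is insensitive to the Faraday equation.

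\emph{Step 2: rewriting via the discrete divergences.} Linearity gives $(\varepsilon\,\delta_t E_h^{n+1},\nabla q_h)=\tfrac1{\Delta t}\big[(\varepsilon E_h^{n+1},\nabla q_h)-(\varepsilon E_h^{n},\nabla q_h)\big]$. By the definition of $\divv_h(\varepsilon\,\cdot)$ this equals $-\langle\delta_t\divv_h(\varepsilon E_h^{n+1}),q_h\rangle$. We apply the same sign convention to $\divv_h(\sigma E_h^{n+\frac12}-f^{n+\frac12})$. Multiplying the identity of Step~1 by $-1$ then yields the claimed evolution law, tested against an arbitrary $q_h\in\mathcal P_h$, hence in $\mathcal P_h^*$. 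We should note that the definition of $\divv_h(\varepsilon E_h^{n})$ is used at every level $n$, including $n=0$.

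\emph{Step 3: conservation.} Under the additional hypothesis that the source divergence vanishes, the law reduces to $\divv_h(\varepsilon E_h^{n+1})=\divv_h(\varepsilon E_h^{n})$ in $\mathcal P_h^*$. Induction on $n$, starting from $\divv_h(\varepsilon E_h^0)=0$, gives the conclusion for all $n$.

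There is no serious obstacle. The only point requiring care is justifying $\nabla\mathcal P_h\subset\mathcal N_h^0$ with the correct boundary conditions: gradients of conforming Lagrange functions of degree $k$ lie in the N\'ed\'elec space of the first kind of matching degree, and zero boundary values give zero tangential trace. If $\mathcal P_h$ were meant without boundary conditions, the same proof would work only for $q_h$ in its $H_0^1$ subspace, and I would restrict the statement accordingly.
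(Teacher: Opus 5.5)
Your proof is correct and follows the paper's argument exactly. You test \eqref{eq:CN_ampere} with $\psi_h=\nabla q_h\in\mathcal N_h^0$, use $\curl\nabla q_h=0$, rewrite the result via the definition of $\divv_h$, and induct from $\divv_h(\varepsilon E_h^0)=0$. Your caveat that $\mathcal P_h$ must carry homogeneous boundary values for $\nabla\mathcal P_h\subset\mathcal N_h^0$ to hold is a sensible clarification the paper leaves implicit. It is consistent with the complex starting at $H_0^1(\Omega)$ and with the later use of $I_h:H_0^1(\Omega)\to\mathcal P_h$.
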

\begin{proof}
The proofs follow along similar lines as in \cite[Lemma 3]{HAntil_2025a} -
since $\nabla \mathcal P_h\subset \mathcal N_h^0$ by the discrete de Rham complex, choose
\( \psi_h=\nabla q_h \) for $q_h\in \mathcal P_h$ 
as a test function in \eqref{eq:CN_ampere}, using $\curl(\nabla q_h)=0$
we obtain
\[
(\varepsilon\,\delta_t E_h^{n+1},\nabla q_h)
+(\sigma\, E_h^{n+\frac12} - f^{n+\frac12},\nabla q_h)
=0
\qquad \forall q_h\in \mathcal P_h.
\]
Using the definition of the discrete divergence functional, this is the same as
\[
\langle \delta_t \divv_h(\varepsilon E_h^{n+1}), q_h \rangle_{\mathcal P_h^*,\mathcal P_h}
+
\langle \divv_h(\sigma E_h^{n+\frac12}-f^{n+\frac12}), q_h \rangle_{\mathcal P_h^*,\mathcal P_h}
=0
\qquad \forall q_h \in \mathcal P_h.
\]
which proves
\[
\delta_t \divv_h(\varepsilon E_h^{n+1})
+
\divv_h(\sigma E_h^{n+\frac12} - f^{\,n+\frac12})
=0
\qquad\text{in }\mathcal P_h^*.
\]
for all $n$. If in addition
\[
\divv_h(\sigma E_h^{n+\frac12} - f^{\,n+\frac12})=0 \qquad\text{in }\mathcal P_h^*
\qquad\text{for all } n,
\]
then
\[
\delta_t \divv_h(\varepsilon E_h^{n+1}) = 0 \qquad\text{in }\mathcal P_h^*
\]
Clearly then, if $\divv_h(\varepsilon E_h^0)=0$, then $\divv_h(\varepsilon E_h^n)=0$ in $\mathcal{P}_h^*$ for all $n$.
\end{proof}

\subsection{Convergence of the discrete solution to the continuous solution}

\paragraph{Lifted (in-time) form.}
Let $E_h^{\mathrm{pl}}$ and $B_h^{\mathrm{pl}}$ be the piecewise–linear time lifts and
$E_h^{1/2},B_h^{1/2},f^{1/2}$ the piecewise–constant midpoint lifts.
Since $\partial_t^{\Delta t}E_h=\partial_t E_h^{\mathrm{pl}}$ and
$\partial_t^{\Delta t}B_h=\partial_t B_h^{\mathrm{pl}}$ a.e.\ in $(0,T)$, the CN system \eqref{eq:CN} is equivalently:
\begin{equation}\label{eq:CN-lift}
\begin{aligned}
(\varepsilon\,\partial_t E_h^{\mathrm{pl}}(t),\,\psi_h)
+(\sigma\,E_h^{1/2}(t),\,\psi_h)
-(\mu^{-1} B_h^{1/2}(t),\,\curl\psi_h)
&=(f^{1/2}(t),\,\psi_h), &&\forall\,\psi_h\in \mathcal N_h^0,\\[2mm]
(\partial_t B_h^{\mathrm{pl}}(t),\,\phi_h)
+(\curl E_h^{1/2}(t),\,\phi_h)&=0, &&\forall\,\phi_h\in \mathcal{RT}_h,
\end{aligned}
\end{equation}
holding for a.e.\ $t\in(0,T)$ (equivalently, for each $n$ and a.e.\ $t\in(t^n,t^{n+1})$).

\begin{lemma}\label{lem:fbound}
Under the assumptions of Lemma~\ref{lem:CN-wp}, the following result holds
\[
\sum_{n=0}^{N_t-1}\Delta t\,\big\|\varepsilon^{-1/2} f^{\,n+\frac12}\big\|_{L^2(\Omega)}^2 
\le C \|f\|^2_{L^2(0,T;L^2(\Omega))} 
\]
where the constant $C > 0$ only depends on $\|\varepsilon^{-1}\|_{L^\infty}$.
\end{lemma}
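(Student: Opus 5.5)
The bound follows from two elementary facts: the weight $\varepsilon^{-1/2}$ is pointwise bounded, and the cell average $f^{\,n+\frac12}$ is an $L^2$-contraction in time (Jensen's inequality). The constant will come out as $C=\|\varepsilon^{-1}\|_{L^\infty}$, with no dependence on $\Delta t$ or $h$.

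\emph{Step 1 (remove the weight).} By Assumption~\ref{ass:data}, $\varepsilon(x)$ is symmetric and uniformly positive definite, so $\varepsilon^{-1}(x)$ exists with $\xi^\top\varepsilon^{-1}(x)\xi\le \|\varepsilon^{-1}\|_{L^\infty}|\xi|^2$, and $\|\varepsilon^{-1}\|_{L^\infty}\le \varepsilon_0^{-1}$. Since $|\varepsilon^{-1/2}g|^2=g^\top\varepsilon^{-1}g$ pointwise, we get
\[
\big\|\varepsilon^{-1/2} f^{\,n+\frac12}\big\|_{L^2(\Omega)}^2\le \|\varepsilon^{-1}\|_{L^\infty}\,\big\|f^{\,n+\frac12}\big\|_{L^2(\Omega)}^2 .
\]

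\emph{Step 2 (Jensen/Cauchy--Schwarz in time).} Bochner integrals commute with the norm inequality, i.e. $\|\int g\|\le\int\|g\|$. Applying this and then Cauchy--Schwarz on $(t^n,t^{n+1})$ gives
\[
\big\|f^{\,n+\frac12}\big\|_{L^2(\Omega)}\le \frac{1}{\Delta t}\int_{t^n}^{t^{n+1}}\|f(s)\|_{L^2(\Omega)}\,ds\le \Big(\frac{1}{\Delta t}\int_{t^n}^{t^{n+1}}\|f(s)\|_{L^2(\Omega)}^2\,ds\Big)^{1/2}.
\]
Hence $\Delta t\,\|f^{\,n+\frac12}\|_{L^2(\Omega)}^2\le \int_{t^n}^{t^{n+1}}\|f(s)\|_{L^2(\Omega)}^2\,ds$.

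\emph{Step 3 (sum).} Summing over $n=0,\dots,N_t-1$, the intervals tile $(0,T)$, so the right-hand side sums to $\|f\|_{L^2(0,T;L^2(\Omega))}^2$. Combining with Step 1 gives the claim with $C=\|\varepsilon^{-1}\|_{L^\infty}$.

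The only point needing any care is Step 1: the statement's $\|\varepsilon^{-1}\|_{L^\infty}$ must be justified as finite from uniform ellipticity. The pointwise inequality $|\varepsilon^{-1/2}\xi|^2\le\varepsilon_0^{-1}|\xi|^2$ gives this. The rest is routine.
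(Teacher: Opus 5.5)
Your proof is correct and is essentially the paper's argument: Jensen's inequality for the Bochner cell average on each $I_n$, summation over the intervals tiling $(0,T)$, and a pointwise bound on the weight by $\|\varepsilon^{-1}\|_{L^\infty}$. The only difference is the order of the steps: you remove the weight first, so you never need to pull $\varepsilon^{-1/2}$ inside the time integral, whereas the paper applies Jensen to $g=\varepsilon^{-1/2}f$ and removes the weight at the end; your ordering also makes the constant $C=\|\varepsilon^{-1}\|_{L^\infty}\le\varepsilon_0^{-1}$ explicit.
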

\begin{proof}
Let $g = \varepsilon^{-1/2} f(t)$, and note that \(
f^{n+\frac12}:=\frac{1}{\Delta t}\int_{t^n}^{t^{n+1}}f(s)\,ds
\).
Since the norm is convex, applying Jensen’s inequality for Bochner integrals on $I_n$ \cite[pp. 120]{MR2759829} yields
\[
\big\|\varepsilon^{-1/2} f^{\,n+\frac12}\big\|_{L^2(\Omega)}^2
=\Big\|\frac{1}{\Delta t}\int_{I_n} g(s)\,ds\Big\|_{L^2(\Omega)}^2
\le \frac{1}{\Delta t}\int_{I_n}\|g(s)\|_{L^2(\Omega)}^2\,ds
=\frac{1}{\Delta t}\int_{I_n}\|\varepsilon^{-1/2}f(s)\|_{L^2(\Omega)}^2\,ds.
\]
Multiplying by $\Delta t$ and summing over $n=0,\dots,N_t-1$ gives
\[
\sum_{n=0}^{N_t-1}\Delta t\,\big\|\varepsilon^{-1/2} f^{\,n+\frac12}\big\|_{L^2(\Omega)}^2
\;\le\;\sum_{n=0}^{N_t-1}\int_{I_n}\|\varepsilon^{-1/2}f(t)\|_{L^2(\Omega)}^2\,dt
=\int_0^T\|\varepsilon^{-1/2}f(t)\|_{L^2(\Omega)}^2\,dt.
\]
After using H\"older's inequality, it is easy to see the constant $C$ depends only on $\|\varepsilon^{-1}\|_{L^\infty}$, which completes the proof.
\end{proof}

\begin{lemma}\label{lem:unifboundEBpl}
Let $(E_h^{n},B_h^{n})$ solve the CN scheme \eqref{eq:CN}, and let $E_h^{\mathrm{pl}}$, $B_h^{\mathrm{pl}}$, $E_h^{1/2}$, $B_h^{1/2}$ be the lifts defined above. Moreover, let $f \in L^2(0,T;L^2(\Omega)^3)$, then 
\[
\|E_h^{\mathrm{pl}}\|_{L^\infty(0,T;L^2(\Omega))}+\|B_h^{\mathrm{pl}}\|_{L^\infty(0,T;L^2(\Omega))}\le C,
\qquad
\|E_h^{1/2}\|_{L^2(0,T;L^2(\Omega))}+\|B_h^{1/2}\|_{L^2(0,T;L^2(\Omega))}\le C,
\]
where the constant $C > 0$ is independent of $h$ and $\Delta t$. 
\end{lemma}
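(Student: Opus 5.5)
The plan is to derive both bounds from the discrete stability estimate \eqref{eq:CN_stability_eps} of Lemma~\ref{lem:CN-wp}, combined with Lemma~\ref{lem:fbound} and the uniform ellipticity and boundedness of $\varepsilon,\mu$. No new estimate is needed.

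\emph{Step 1: a uniform bound on the nodal energies.} By Lemma~\ref{lem:fbound}, the forcing sum in \eqref{eq:CN_stability_eps} is bounded by $C\|f\|^2_{L^2(0,T;L^2(\Omega))}$. For the initial energy $\mathcal E^0$, I use $L^2$-stability of the initial projections: $\|E_h(0)\|_{L^2}=\|Q_h^NE_0\|_{L^2}\le\|E_0\|_{L^2}$, and $B_h(0)$ is an $L^2$-orthogonal projection of $B_0$ onto $Z_h$, so $\|B_h(0)\|_{L^2}\le\|B_0\|_{L^2}$. Together with $\varepsilon,\mu^{-1}\in L^\infty$, this gives $\mathcal E^0\le C(\|E_0\|^2+\|B_0\|^2)$, independent of $h$ and $\Delta t$. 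Next, uniform ellipticity gives
\[
\mathcal E^m\ge \tfrac12\big(\varepsilon_0\|E_h^m\|^2+\|\mu\|_{L^\infty}^{-1}\|B_h^m\|^2\big),
\]
where the constant $\|\mu\|_{L^\infty}^{-1}$ comes from $\xi^\top\mu^{-1}\xi\ge|\xi|^2/\|\mu\|_{L^\infty}$ for symmetric positive definite $\mu$. Hence
\[
\max_{0\le m\le N_t}\big(\|E_h^m\|_{L^2}+\|B_h^m\|_{L^2}\big)\le C
\]
uniformly in $h$ and $\Delta t$.

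\emph{Step 2: transfer to the lifts.} On each interval $(t^n,t^{n+1}]$, $E_h^{\mathrm{pl}}(t)$ is a convex combination $(1-\theta)E_h^n+\theta E_h^{n+1}$ with $\theta\in[0,1]$. By the triangle inequality its $L^2$ norm is at most $\max(\|E_h^n\|,\|E_h^{n+1}\|)$, so the $L^\infty(0,T;L^2)$ bound for $E_h^{\mathrm{pl}}$ follows from Step~1; the same argument applies to $B_h^{\mathrm{pl}}$. For the midpoint lifts, $\|E_h^{n+\frac12}\|\le\tfrac12(\|E_h^n\|+\|E_h^{n+1}\|)\le C$. Therefore
\[
\|E_h^{1/2}\|^2_{L^2(0,T;L^2)}=\sum_n\Delta t\,\|E_h^{n+\frac12}\|^2\le C^2T,
\]
and likewise for $B_h^{1/2}$.

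The only point needing care is the uniformity of $\mathcal E^0$ in $h$, i.e., the stability of the initial projections. I expect this to be the main (mild) obstacle, because the choice of $B_h(0)$ in \eqref{eq:initialdiscdata} is stated only through its divergence constraint. I would settle it by invoking the orthogonal-projection choice onto $Z_h$, or alternatively the $L^2$-convergence $B_h(0)\to B_0$ from \cite[Lemma~7 \& 8]{HAntil_2025a}, which implies boundedness for small $h$. Everything else is bookkeeping.
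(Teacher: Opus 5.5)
Your proposal is correct and follows essentially the same route as the paper: the stability estimate \eqref{eq:CN_stability_eps}, combined with Lemma~\ref{lem:fbound} and ellipticity, gives uniform nodal bounds. These transfer to $E_h^{\mathrm{pl}},B_h^{\mathrm{pl}}$ by convexity, and to the midpoint lifts by summing $\Delta t\,\|E_h^{n+\frac12}\|^2$ over $n$, and you additionally justify explicitly two points the paper leaves implicit: the uniform-in-$h$ bound on $\mathcal E^0$ (via $L^2$-contractivity of $Q_h^N$ and of the orthogonal projection onto $Z_h$) and the lower bound $\xi^\top\mu^{-1}\xi\ge|\xi|^2/\|\mu\|_{L^\infty}$.
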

\begin{proof}
By Lemma~\ref{lem:CN-wp} (energy identity and stability) and Lemma~\ref{lem:fbound} we have
\[
\mathcal E^m+\Delta t\sum_{n=0}^{m-1}\|\sqrt{\sigma}\,E_h^{n+1/2}\|_{L^2(\Omega)}^2
\;\le\;
C_T\Big(\mathcal E^0+\sum_{n=0}^{m-1}\Delta t\,\|\varepsilon^{-1/2}f^{n+1/2}\|_{L^2(\Omega)}^2\Big)
\;\le\;C \quad \forall~ m,
\]
with $C$ independent of $h,\Delta t$. 
By uniform ellipticity of $\varepsilon$ and $\mu^{-1}$, the discrete energy $\mathcal{E}^m$ controls $\|E_h^m\|_{L^2(\Omega)}^2 + \|B_h^m\|_{L^2(\Omega)}^2.$ Hence, for every node $t^m$,
\[
\|E_h^m\|_{L^2(\Omega)}^2 + \|B_h^m\|_{L^2(\Omega)}^2 \le C,
\]
so that $\sup_m \|E_h^m\|_{L^2(\Omega)}+\sup_m \|B_h^m\|_{L^2(\Omega)}\le C$ (with a possibly larger $C$).

Now, on each interval $(t^n,t^{n+1}]$, note that the piecewise linear reconstructions satisfy
\[
E_h^{\mathrm{pl}}(t)=(1-\theta)E_h^n+\theta E_h^{n+1},\qquad 
B_h^{\mathrm{pl}}(t)=(1-\theta)B_h^n+\theta B_h^{n+1},
\quad \theta=\frac{t-t^n}{\Delta t}\in(0,1].
\]
By convexity of the norm,
\[
\|E_h^{\mathrm{pl}}(t)\|_{L^2(\Omega)}
\le (1-\theta)\|E_h^n\|_{L^2(\Omega)}+\theta\|E_h^{n+1}\|_{L^2(\Omega)}
\le \max\{\|E_h^n\|_{L^2(\Omega)},\|E_h^{n+1}\|_{L^2(\Omega)}\},
\]
and similarly for $B_h^{\mathrm{pl}}$. Taking the supremum in $t$ gives
\[
\|E_h^{\mathrm{pl}}\|_{L^\infty(0,T;L^2(\Omega))} \le \sup_{0\le m\le N_t}\|E_h^m\|_{L^2(\Omega)}\le C,
\qquad
\|B_h^{\mathrm{pl}}\|_{L^\infty(0,T;L^2(\Omega))} \le \sup_{0\le m\le N_t}\|B_h^m\|_{L^2(\Omega)}\le C.
\]
For the midpoint lift $E_h^{1/2}(t)=E_h^{n+1/2}:=\tfrac12(E_h^{n+1}+E_h^n)$ on $(t^n,t^{n+1}]$,
\begin{multline*}
\|E_h^{1/2}\|_{L^2(0,T;L^2(\Omega))}^2
=\sum_{n=0}^{N_t-1}\Delta t\,\big\|E_h^{n+1/2}\big\|_{L^2(\Omega)}^2
\le \sum_{n=0}^{N_t-1}\Delta t\,\frac{\|E_h^{n+1}\|_{L^2(\Omega)}^2+\|E_h^{n}\|_{L^2(\Omega)}^2}{2}\\
\le T\,\sup_{0\le m\le N_t}\|E_h^m\|_{L^2(\Omega)}^2 \le C.
\end{multline*}
Thus,
\[
\|E_h^{\mathrm{pl}}\|_{L^\infty(0,T;L^2(\Omega))}+\|B_h^{\mathrm{pl}}\|_{L^\infty(0,T;L^2(\Omega))}\le C,
\qquad
\|E_h^{1/2}\|_{L^2(0,T;L^2(\Omega))} + \|B_h^{1/2}\|_{L^2(0,T;L^2(\Omega))}\le C.
\]
The proof is complete.
\end{proof}

\begin{lemma}[Midpoint and piecewise–linear reconstructions have the same limit]
\label{lem:pl-vs-midpoint}
Let $(E_h^{n},B_h^{n})$ solve the CN scheme \eqref{eq:CN}, and let
$E_h^{\mathrm{pl}}, B_h^{\mathrm{pl}}, E_h^{1/2}, B_h^{1/2}$ be the lifts defined above.
Let $f \in L^2(0,T;L^2(\Omega)^3)$. 
Then there exist functions $ E, B\in L^\infty(0,T;L^2(\Omega)^3)$ and a subsequence (not relabeled) such that
\[
E_h^{\mathrm{pl}}\stackrel{*}{\rightharpoonup}\, E,\qquad
B_h^{\mathrm{pl}}\stackrel{*}{\rightharpoonup}\, B
\quad\text{in }L^\infty(0,T;L^2(\Omega)^3).
\]
Moreover, the midpoint reconstructions converge to the same limits in the following sense:
for every $\Phi\in L^1(0,T;L^2(\Omega)^3)$,
\[
\int_0^T \!\big(E_h^{1/2}(t),\Phi(t)\big)\,dt \;\longrightarrow\; 
\int_0^T \!\big( E(t),\Phi(t)\big)\,dt,
\quad
\int_0^T \!\big(B_h^{1/2}(t),\Phi(t)\big)\,dt \;\longrightarrow\; 
\int_0^T \!\big( B(t),\Phi(t)\big)\,dt.
\]
In other words, $E_h^{1/2}\to E$ and $B_h^{1/2}\to B$ in the duality with
$L^1(0,T;L^2(\Omega)^3)$.
\end{lemma}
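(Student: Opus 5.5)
The argument rests on the uniform bounds of Lemma~\ref{lem:unifboundEBpl} together with an elementary comparison of the two reconstructions on each time cell.

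\emph{Extraction of limits.} Since $L^\infty(0,T;L^2(\Omega)^3)$ is the dual of the separable space $L^1(0,T;L^2(\Omega)^3)$, bounded sets are weak-$*$ sequentially compact (Banach--Alaoglu). Lemma~\ref{lem:unifboundEBpl} bounds $E_h^{\mathrm{pl}}$ and $B_h^{\mathrm{pl}}$ uniformly in $h$ and $\Delta t$ in this space. Hence we can extract a common subsequence along which $E_h^{\mathrm{pl}}\stackrel{*}{\rightharpoonup}E$ and $B_h^{\mathrm{pl}}\stackrel{*}{\rightharpoonup}B$.

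\emph{Identification of the midpoint limits.} On $(t^n,t^{n+1}]$, with $\theta=(t-t^n)/\Delta t$, we have
\[
E_h^{1/2}(t)-E_h^{\mathrm{pl}}(t)=\Big(\tfrac12-\theta\Big)\big(E_h^{n+1}-E_h^n\big).
\]
The difference $E_h^{n+1}-E_h^n$ need not be small in $L^2$ at this regularity. It is, however, uniformly bounded, since $\|E_h^{n+1}-E_h^n\|_{L^2}\le 2\sup_m\|E_h^m\|_{L^2}\le C$. The useful observation is that the weight $\tfrac12-\theta$ has zero mean on each cell.

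For $\Phi\in L^1(0,T;L^2(\Omega)^3)$, let $\bar\Phi$ denote its cell average on $I_n$. Then
\[
\int_{I_n}\Big(\tfrac12-\theta\Big)\big(E_h^{n+1}-E_h^n,\Phi(t)\big)\,dt
=\int_{I_n}\Big(\tfrac12-\theta\Big)\big(E_h^{n+1}-E_h^n,\Phi(t)-\bar\Phi\big)\,dt .
\]
Summing over $n$ gives
\[
\Big|\int_0^T\big(E_h^{1/2}-E_h^{\mathrm{pl}},\Phi\big)\,dt\Big|
\le C\,\|\Phi-\Pi_{\Delta t}\Phi\|_{L^1(0,T;L^2(\Omega))},
\]
where $\Pi_{\Delta t}$ is the piecewise-constant time averaging operator.

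It remains to show that the right-hand side tends to $0$ as $\Delta t\to0$. The operators $\Pi_{\Delta t}$ are uniformly bounded on $L^1(0,T;X)$ with norm at most $1$. For continuous $\Phi$ (or step functions) the convergence $\Pi_{\Delta t}\Phi\to\Phi$ follows from uniform continuity. A density argument then extends it to all of $L^1(0,T;L^2(\Omega)^3)$. An alternative route is to approximate $\Phi$ by $\Phi_\delta\in C([0,T];L^2)$ and use the bound $\sup_t\|E_h^{1/2}-E_h^{\mathrm{pl}}\|\le C$ to control the error term. Combining this with the weak-$*$ convergence of $E_h^{\mathrm{pl}}$ yields $\int_0^T(E_h^{1/2},\Phi)\,dt\to\int_0^T(E,\Phi)\,dt$. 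The argument for $B$ is identical.

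\emph{Main obstacle.} The difficulty is that the difference $E_h^{1/2}-E_h^{\mathrm{pl}}$ is only bounded, not small, at this regularity. This is the point where the zero-mean trick together with the density argument is essential. A bound on $\delta_t E_h$ in $L^2$ is unavailable, as one only has dual-norm bounds, so a direct $O(\Delta t)$ estimate cannot be used. The convergence statement also tacitly requires that $\Delta t\to0$ along the subsequence.
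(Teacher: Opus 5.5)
Your proof is correct and follows essentially the same route as the paper. Your zero-mean cancellation of $(\tfrac12-\theta)$ on each cell is equivalent to the paper's identity $\int_0^T(B_h^{1/2},\Phi)\,dt=\int_0^T(B_h^{\mathrm{pl}},\Pi_{\Delta t}\Phi)\,dt$, which encodes the fact that the midpoint value is the cell average of the affine lift. Both arguments then conclude via the uniform $L^\infty(0,T;L^2)$ bound together with $\Pi_{\Delta t}\Phi\to\Phi$ in $L^1(0,T;L^2)$ (the paper's Lemma~\ref{lem:time-average-convergence}). Your remarks on the separability of the predual and on needing $\Delta t\to0$ along the subsequence are details the paper leaves implicit.
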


\begin{proof}
We detail the argument for $B$; the proof for $E$ is identical.

\smallskip\noindent\emph{Step 1 (choose a subsequence for the piecewise–linear lifts).}
By Lemma~\ref{lem:unifboundEBpl}, $\{B_h^{\mathrm{pl}}\}$ is bounded in
$L^\infty(0,T;L^2(\Omega)^3)$. By the Banach–Alaoglu theorem there exists
$ B\in L^\infty(0,T;L^2(\Omega)^3)$ and a subsequence (not relabeled) such that
$B_h^{\mathrm{pl}}\stackrel{*}{\rightharpoonup} B$ in $L^\infty(0,T;L^2(\Omega)^3)$.

\smallskip\noindent\emph{Step 2 (midpoint equals the cell average of the piecewise–linear lift).}
On each interval $I_n=(t^n,t^{n+1}]$, $B_h^{\mathrm{pl}}$ is affine in $t$, hence
\[
\frac{1}{\Delta t}\int_{t^n}^{t^{n+1}} B_h^{\mathrm{pl}}(s)\,ds
= \frac{B_h^{n+1}+B_h^n}{2}=B_h^{n+1/2}.
\]
Define $(\mathcal M_{\Delta t}v)(t):=\frac{1}{\Delta t}\int_{t^n}^{t^{n+1}}v(s)\,ds$ for $t\in I_n$.
Then
\begin{equation}\label{eq:Mh-id}
B_h^{1/2}=\mathcal M_{\Delta t}\big(B_h^{\mathrm{pl}}\big)\quad\text{on }(0,T].
\end{equation}

\smallskip\noindent\emph{Step 3 (duality identity with averaged test functions).}
Let $\Phi\in L^1(0,T;L^2(\Omega)^3)$ and set
\(
(\Pi_{\Delta t}\Phi)(s):=\frac{1}{\Delta t}\int_{t^n}^{t^{n+1}}\Phi(t)\,dt
\) for $s\in I_n$.
Using \eqref{eq:Mh-id} and Fubini's theorem (which holds since $B_h^{\mathrm{pl}}\in L^\infty(0,T; L^2(\Omega))$ and
$\Phi\in L^1(0,T;L^2(\Omega))$), we obtain
\[
\int_0^T \big(B_h^{1/2}(t),\Phi(t)\big)\,dt
=\sum_{n=0}^{N_t-1}\int_{I_n}\Big(\frac{1}{\Delta t}\int_{I_n} B_h^{\mathrm{pl}}(s)\,ds,\ \Phi(t)\Big)\,dt
=\int_0^T \big(B_h^{\mathrm{pl}}(s),(\Pi_{\Delta t}\Phi)(s)\big)\,ds.
\]

\smallskip\noindent\emph{Step 4 (pass to the limit against $L^1$ test functions).}
It is standard that $\Pi_{\Delta t}\Phi\to\Phi$ in $L^1(0,T;L^2(\Omega)^3)$ as $\Delta t\to0$, see  Lemma~\ref{lem:time-average-convergence}.
Using $B_h^{\mathrm{pl}}\stackrel{*}{\rightharpoonup} B$ in $L^\infty(0,T;L^2(\Omega))$, we get $\forall\,\Phi\in L^1(0,T;L^2(\Omega)^3)$
\[
\int_0^T \big(B_h^{1/2}(t),\Phi(t) \big)\,dt
=\int_0^T \big(B_h^{\mathrm{pl}}(s),\Pi_{\Delta t}\Phi(s)\big)\,dt
\;\longrightarrow\; \int_0^T \big( B(t),\Phi(t)\big) \,dt.
\]
This proves the asserted convergence in the duality with $L^1(0,T;L^2(\Omega))$.
\end{proof}

\begin{theorem}[Convergence of the fully discrete CN scheme]\label{thm:FD-convergence}
Let $(E_h^{n},B_h^{n})_{n=0}^{N_t}$ solve the Crank--Nicolson scheme \eqref{eq:CN}, and let
$E_h^{\mathrm{pl}},B_h^{\mathrm{pl}},E_h^{1/2},B_h^{1/2}$ be the piecewise linear / midpoint lifts.
Let $f \in L^2(0,T;L^2(\Omega)^3)$. Then the limit from Lemma~\ref{lem:pl-vs-midpoint}
is the unique solution to \eqref{eq:MaxWeak} in the sense of Definition~\ref{def:weak_soln}. 
\end{theorem}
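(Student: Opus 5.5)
The plan is the standard compactness/density argument: pass to the limit in the lifted CN system \eqref{eq:CN-lift} tested against space–time test functions built from interpolants of smooth fields, identify the limit as a weak solution, and then use uniqueness from Theorem~\ref{thm:main} to conclude that the whole sequence converges. The argument runs in five steps.

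\emph{Step 1: test functions.} Fix $\psi\in H_0(\curl;\Omega)$ smooth enough (e.g.\ in a dense subset such as $C_c^\infty(\Omega)^3$), $\phi\in C^\infty(\overline\Omega)^3$, and a scalar $\eta\in C^1([0,T])$ with $\eta(T)=0$. Let $\psi_h\in\mathcal N_h^0$ and $\phi_h\in\mathcal{RT}_h$ be the canonical commuting interpolants of $\psi$ and $\phi$. Then $\psi_h\to\psi$ in $H(\curl)$ and $\phi_h\to\phi$ in $L^2$. For $\phi$ we only need $L^2$ convergence, since in the Faraday equation $\curl$ falls on $E_h$.

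\emph{Step 2: integrate by parts in time.} Multiply \eqref{eq:CN-lift} by $\eta(t)$ and integrate over $(0,T)$. Since $E_h^{\rm pl}$ is continuous and piecewise affine, this gives
\[
-\int_0^T(\varepsilon E_h^{\rm pl},\psi_h)\eta'\,dt-(\varepsilon E_h^0,\psi_h)\eta(0)+\int_0^T\big[(\sigma E_h^{1/2},\psi_h)-(\mu^{-1}B_h^{1/2},\curl\psi_h)\big]\eta\,dt=\int_0^T(f^{1/2},\psi_h)\eta\,dt,
\]
and analogously for $B$.

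\emph{Step 3: pass to the limit.} Each term is a pairing of a weakly converging sequence with a strongly converging one.
\begin{itemize}
\item The $E_h^{\rm pl}$ term converges by weak-$*$ convergence against $\varepsilon\psi_h\eta'$, which converges strongly in $L^1(0,T;L^2)$.
\item The midpoint terms converge by Lemma~\ref{lem:pl-vs-midpoint}, applied with $\Phi=\sigma\psi\eta$ and $\Phi=\mu^{-1}\curl\psi\,\eta$. The error from replacing $\psi$ by $\psi_h$ is controlled by the uniform $L^2$ bounds of Lemma~\ref{lem:unifboundEBpl}.
\item $f^{1/2}\to f$ in $L^2(0,T;L^2)$ by Lemma~\ref{lem:time-average-convergence}.
\item The initial data converge, $E_h^0\to E_0$ and $B_h^0\to B_0$, by \eqref{eq:initialdiscdata}.
\end{itemize}
The Faraday term $(\curl E_h^{1/2},\phi_h)$ is the delicate one, since we have no bound on $\curl E_h$. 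I would first rewrite it using the commuting-diagram property. Concretely, test with $\phi_h=\Pi^{RT}_h\phi$ and move the curl onto $\phi$ discretely; this is exact for $\phi\in\curl$-regular fields with zero tangential trace of $E_h$, giving $(\curl E_h,\phi_h)=(E_h,\curl\phi)+(\curl E_h,\phi_h-\phi)$. The remainder is the main obstacle. It can be avoided by testing instead with $\phi_h=P_h\phi$, the $L^2$ projection of Proposition~\ref{prop:L2proj-RT}: since $\curl E_h\in\mathcal{RT}_h$, we get $(\curl E_h,P_h\phi)=(\curl E_h,\phi)=(E_h,\curl\phi)$, using $E_h\times\nu=0$. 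Moreover $P_h\phi\to\phi$ in $L^2$, so everything passes to the limit. The same projection trick is what was used in Lemma~\ref{lem:CN-wp}.

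\emph{Step 4: identify the limit.} The result is the weak-in-time formulation of \eqref{eq:MaxWeak} with initial data, for all $\eta$ and all $\psi,\phi$ in dense subsets of $H_0(\curl)$ and $H(\curl)$. Taking $\eta\in C_c^\infty(0,T)$ shows that $\varepsilon\partial_tE=f-\sigma E+\curl(\mu^{-1}B)$ in $L^2(0,T;H_0(\curl)^*)$, and similarly for $\partial_tB$. Hence $E$ and $B$ have the $H^1(0,T;\cdot^*)$ regularity and are continuous into the dual spaces. Comparing with general $\eta$ recovers $E(0)=E_0$ and $B(0)=B_0$.

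\emph{Step 5: conclude.} Continuity into $L^2$, i.e.\ membership in $C([0,T];L^2)$, then follows from the Lions–Magenes-type argument or the energy identity, exactly as in the proof of Theorem~\ref{thm:main} in \cite{HAntil_2025a}. By uniqueness in Theorem~\ref{thm:main} the limit is \emph{the} weak solution, and since every subsequence has a further subsequence with this same limit, the whole sequence converges.
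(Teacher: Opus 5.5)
Your proposal is correct and follows essentially the same route as the paper: compactness from Lemma~\ref{lem:pl-vs-midpoint}, integration by parts in time of the lifted system against a scalar cutoff, and the same key device for Faraday's law, namely the $L^2$ projection $P_h$ onto $\mathcal{RT}_h$, so that $(\curl E_h^{1/2},P_h\Phi)=(\curl E_h^{1/2},\Phi)=(E_h^{1/2},\curl\Phi)$; the limit is then identified and uniqueness upgrades subsequence convergence to convergence of the whole sequence. Your minor deviations are harmless: you test Amp\`ere's law with canonical interpolants of smooth $\psi$ plus a density argument instead of the Riesz projection $R_h$ of Proposition~\ref{prop:Riesz-Hcurl}, and you allow $\eta(0)\neq 0$ so that the initial conditions appear directly.
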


\begin{proof}
\emph{Step 1: Compactness.}
By Lemma~\ref{lem:pl-vs-midpoint}, we have that up to subsequence 
\[
E_h^{\mathrm{pl}}\stackrel{*}{\rightharpoonup} E,\qquad
B_h^{\mathrm{pl}}\stackrel{*}{\rightharpoonup} B
\quad\text{in }L^\infty(0,T;L^2(\Omega)^3),
\]
and, moreover,
\[
E_h^{1/2}\rightharpoonup  E,\qquad
B_h^{1/2}\rightharpoonup  B
\quad\text{in duality with } L^1(0,T;L^2(\Omega)^3).
\]

\medskip
\emph{Step 2: Discrete integration by parts in time (with smooth cutoff).}
Let $\eta\in C_c^\infty(0,T)$ and set $\eta^{n+1/2}:=\frac{1}{\Delta t}\int_{t^n}^{t^{n+1}}\eta(t)\,dt$.
For any time-independent discrete $v_h\in L^2(\Omega)^3$,
\begin{equation}\label{eq:disc-ibp0}
\sum_{n=0}^{N_t-1}\Delta t\,(\delta_t u^{n+1},v_h)\,\eta^{n+1/2}
\;=\;-\int_0^T (u_h^{\mathrm{pl}}(t),v_h)\,\eta'(t)\,dt,
\end{equation}
because $u_h^{\mathrm{pl}}$ is affine on each $I_n$ and $\eta^{n+1/2}$ is the cell average.
See Lemma~\ref{lem:disc-IBP} for more details regarding Equation~\eqref{eq:disc-ibp0}. 

\medskip
\emph{Step 3: Limit equation for Faraday's law.}
Fix $\Phi\in H(\curl;\Omega)$ and let $\phi_h:=P_h\Phi\in \mathcal{RT}_h$, where $P_h:L^2(\Omega)^3\to \mathcal{RT}_h$
is the $L^2$-stable RT projector (see Proposition~\ref{prop:L2proj-RT}) with $P_h\Phi\to\Phi$ in $L^2$.
Multiply \eqref{eq:CN_faraday} by $\eta^{n+1/2}\Delta t$ and sum over $n$:
\[
\sum_{n} \Delta t\,(\delta_t B_h^{n+1},\phi_h)\,\eta^{n+1/2}
\;+\;\sum_{n}\Delta t\,(\curl E_h^{n+1/2},\phi_h)\,\eta^{n+1/2}=0.
\]
Use \eqref{eq:disc-ibp0} for the first sum and the midpoint lift for the second:
\[
-\int_0^T (B_h^{\mathrm{pl}}(t),\phi_h)\,\eta'(t)\,dt
\;+\;\int_0^T (\curl E_h^{1/2}(t),\phi_h)\,\eta(t)\,dt=0.
\]
Since $\curl E_h^{1/2}(t)\in \mathcal{RT}_h$ and $P_h$ is the $L^2$ projector onto $\mathcal{RT}_h$, we have
$(\curl E_h^{1/2},\phi_h)=(\curl E_h^{1/2},\Phi)$. Next we will pass through the limit $h,\Delta t\to0$, term by term.

\smallskip\noindent\emph{Term 1: $-\displaystyle\int_0^T(B_h^{\mathrm{pl}},\phi_h)\eta'$}.
Add and subtract $\Phi$:
\[
\int_0^T (B_h^{\mathrm{pl}},\phi_h)\,\eta'
=\int_0^T \big(B_h^{\mathrm{pl}},\Phi\big)\,\eta'
+\int_0^T \big(B_h^{\mathrm{pl}},\phi_h-\Phi\big)\,\eta'=:I_{1,h}+I_{2,h}.
\]
Since $B_h^{\mathrm{pl}}\stackrel{*}{\rightharpoonup} B$ in $L^\infty(0,T;L^2(\Omega))$, due to
Lemma~\ref{lem:pl-vs-midpoint}, and
$\eta'\,\Phi\in L^1(0,T;L^2(\Omega))$, we have
\[
I_{1,h}\;\longrightarrow\;\int_0^T \big( B,\Phi\big)\,\eta'.
\]
For the remainder, use the uniform bound $\|B_h^{\mathrm{pl}}\|_{L^\infty(0,T;L^2(\Omega))}\le C$ and
$P_h\Phi\to\Phi$ in $L^2(\Omega)^3$:
\[
|I_{2,h}|
\le \|B_h^{\mathrm{pl}}\|_{L^\infty(0,T;L^2(\Omega))}\,\|\eta'\|_{L^1(0,T)}\,\|\phi_h-\Phi\|_{L^2(\Omega)}
\;\xrightarrow[h\to0]{}\;0.
\]
Hence
\[
-\int_0^T (B^{\mathrm{pl}}_h,\phi_h)\,\eta'
\;\longrightarrow\;
-\int_0^T ( B,\Phi)\,\eta'.
\]

\smallskip\noindent\emph{Term 2: $\displaystyle\int_0^T (E_h^{1/2},\curl\Phi)\,\eta$}.
Since $\eta\,\curl\Phi\in L^1(0,T;L^2(\Omega)^3)$ and $E_h^{1/2}\rightharpoonup  E$ (due to 
Lemma~\ref{lem:pl-vs-midpoint}) in the
duality with $L^1(0,T;L^2(\Omega))$, we directly obtain
\[
\int_0^T \big(E_h^{1/2},\curl\Phi\big)\,\eta
\;\longrightarrow\;
\int_0^T \big( E,\curl\Phi\big)\,\eta.
\]

Combining the two limits yields
\[
\begin{aligned}
-\int_0^T \big( B,\Phi\big)\,\eta'
&=- \int_0^T \big( E,\curl\Phi\big)\,\eta , 
\qquad \forall \Phi \in  H(\mathrm{curl};\Omega) \, , \forall \eta \in C_c^\infty(0,T) .
\end{aligned}
\]
The remaining steps involving identification of the time derivative via distributional testing, extension by density of the test space, and a standard uniqueness argument to upgrade subsequence convergence to full convergence, closely mimic the proof of~\cite[Theorem~3]{HAntil_2025a} and are omitted for brevity.

\medskip
\emph{Step 4: Limit equation for Amp\`ere's law.}
Fix $\Psi\in H_0(\curl;\Omega)$ and set $\psi_h:=R_h\Psi\in \mathcal N_h^0$, where
$R_h:H_0(\curl)\to \mathcal N_h^0$ is the $L^2$-stable N\'ed\'elec projector
(Proposition~\ref{prop:Riesz-Hcurl}) with $R_h\Psi\to\Psi$ in $L^2(\Omega)^3$ and
$\curl R_h\Psi\to\curl\Psi$ in $L^2(\Omega)^3$.
Multiply \eqref{eq:CN_ampere} by $\eta^{n+1/2}\Delta t$ and sum over $n$:
\[
\sum_{n}\Delta t\,\langle \varepsilon\,\delta_t E_h^{n+1},\psi_h\rangle\,\eta^{n+1/2}
+ \int_0^T (\sigma E_h^{1/2},\psi_h)\,\eta\,dt
- \int_0^T (\mu^{-1}B_h^{1/2},\curl\psi_h)\,\eta\,dt
= \int_0^T (f^{1/2},\psi_h)\,\eta\,dt.
\]
Apply \eqref{eq:disc-ibp0} to the first sum (with $u=E_h$ and test $\varepsilon\psi_h$) to get
\[
-\int_0^T (\varepsilon E_h^{\mathrm{pl}},\psi_h)\,\eta'(t)\,dt
+ \int_0^T \!\Big[(\sigma E_h^{1/2},\psi_h)
-(\mu^{-1}B_h^{1/2},\curl\psi_h)\Big]\eta\,dt
= \int_0^T (f^{1/2},\psi_h)\,\eta\,dt.
\]
Remaining steps, after using the properties of $R_h$ from Proposition~\ref{prop:Riesz-Hcurl}, 
closely mimic \cite[Theorem~3]{HAntil_2025a} and are omitted for brevity.
\end{proof}

\paragraph{Convergence of the discrete Gauss laws.}
\begin{lemma}[Strong convergence of $\divv B_h^n$]
\label{lem:divBhn-strong-zero-upd}
Let $(E_h^n,B_h^n)$ solve \eqref{eq:CN} and let the assumptions of 
Lemma~\ref{lem:disc_gauss} and Theorem~\ref{thm:FD-convergence} hold.
If moreover
\[
(\divv B_h^0,q_h)=0\qquad \forall q_h\in\mathcal Q_h,
\]
then
\[
\divv B_h^{\mathrm{pl}} \equiv 0 \quad \text{in } L^\infty(0,T;L^2(\Omega)).
\]
In particular,
\[
\divv B_h^{\mathrm{pl}}\to 0 \quad \text{strongly in } L^\infty(0,T;L^2(\Omega)).
\]
and
\[
\divv B = 0
\]
in the sense of distributions on \(\Omega\times(0,T)\).
\end{lemma}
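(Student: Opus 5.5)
The plan is to combine the exact discrete Gauss law of Lemma~\ref{lem:disc_gauss} with the fact that $\divv$ maps $\mathcal{RT}_h$ into $\mathcal Q_h$. This makes the discrete divergence an honest pointwise divergence, and it vanishes identically.

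\emph{Step 1: discrete divergence equals true divergence.} By the discrete de~Rham complex, $\divv\,\mathcal{RT}_h\subset\mathcal Q_h$. For $B_h^0\in\mathcal{RT}_h$, the hypothesis $(\divv B_h^0,q_h)=0$ for all $q_h\in\mathcal Q_h$ can therefore be tested with $q_h=\divv B_h^0$. This gives $\|\divv B_h^0\|_{L^2(\Omega)}^2=0$, i.e.\ $\divv B_h^0=0$ in $L^2(\Omega)$, not merely in $\mathcal Q_h$.

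\emph{Step 2: propagation in time.} Lemma~\ref{lem:disc_gauss} gives $\delta_t(\mathrm{div}_h B_h^{n+1})=0$ in $\mathcal Q_h$. More precisely, $r_h=0$ in $\mathcal{RT}_h$ and $\divv\curl E_h^{n+\frac12}=0$ pointwise. Hence $\divv B_h^{n+1}=\divv B_h^n$ as elements of $\mathcal Q_h\subset L^2(\Omega)$, and by induction $\divv B_h^n=0$ in $L^2(\Omega)$ for all $n$. On each $(t^n,t^{n+1}]$ we have $B_h^{\mathrm{pl}}(t)=(1-\theta)B_h^n+\theta B_h^{n+1}$, and divergence is linear. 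Therefore $\divv B_h^{\mathrm{pl}}(t)=0$ for every $t$, so $\divv B_h^{\mathrm{pl}}\equiv0$ in $L^\infty(0,T;L^2(\Omega))$. The strong convergence to $0$ is then trivial.

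\emph{Step 3: the limit.} This is the only step needing care, since $B$ is obtained only as a weak-$*$ limit and $\divv$ does not commute with weak limits directly. The plan is to pass to the limit in the distributional formulation. Fix $\varphi\in C_c^\infty(\Omega)$ and $\eta\in C_c^\infty(0,T)$. Because $B_h^{\mathrm{pl}}(t)\in H(\divv;\Omega)$ and $\varphi$ has compact support, integration by parts gives
\[
\int_0^T\big(B_h^{\mathrm{pl}}(t),\nabla\varphi\big)\eta(t)\,dt=-\int_0^T\big(\divv B_h^{\mathrm{pl}}(t),\varphi\big)\eta(t)\,dt=0 .
\]
The function $\eta\nabla\varphi$ lies in $L^1(0,T;L^2(\Omega)^3)$. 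By the weak-$*$ convergence $B_h^{\mathrm{pl}}\stackrel{*}{\rightharpoonup}B$ from Lemma~\ref{lem:pl-vs-midpoint} (along the subsequence of Theorem~\ref{thm:FD-convergence}), the left-hand side tends to $\int_0^T(B,\nabla\varphi)\eta\,dt$. Thus $\int_0^T(B,\nabla\varphi)\eta\,dt=0$ for all such $\varphi,\eta$. Since tensor products $\varphi(x)\eta(t)$ are dense in $C_c^\infty(\Omega\times(0,T))$, this yields $\divv B=0$ in $\mathcal D'(\Omega\times(0,T))$.

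Because the limit is unique by Theorem~\ref{thm:FD-convergence}, the conclusion holds for the whole sequence, not only the subsequence.
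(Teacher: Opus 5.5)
Your proposal is correct and takes essentially the same route as the paper. Both use $\divv\mathcal{RT}_h\subset\mathcal Q_h$, Lemma~\ref{lem:disc_gauss} and the initial condition to get $\divv B_h^{\mathrm{pl}}\equiv 0$ on each time cell; the paper omits the final limit passage by citing its earlier work, and your Step~3 supplies that standard argument: test against $\eta\nabla\varphi\in L^1(0,T;L^2(\Omega)^3)$ and use the weak-$*$ convergence from Lemma~\ref{lem:pl-vs-midpoint}.
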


\begin{proof}
For all $n$, since \(B_h^n\in\mathcal{RT}_h\), we have
$\divv B_h^n\in\mathcal Q_h.$
By Lemma~\ref{lem:disc_gauss} we have that
\[
\delta_t(\mathrm{div}_h B_h^{n+1})=0
\]
By the definition of the piecewise linear reconstruction $B_h^{\mathrm{pl}}$ this is equivalent to 
\[
\partial_t(\divv B_h^{\mathrm{pl}}(t),q_h)=0\qquad \forall q_h\in\mathcal Q_h \quad \text{ for } t\in (t^n, t^{n+1}].
\]
and using the initial data then implies
\[
(\divv B_h^{\mathrm{pl}},q_h)=0\qquad \forall q_h\in\mathcal Q_h \qquad\text{for all } n.
\]
The rest of the proof follows similarly as in \cite[Lemma 5]{HAntil_2025a} and is omitted for brevity. 
\end{proof}

\begin{lemma}[Convergence of the discrete electric Gauss law]
\label{lem:fully-disc-gaussE-conv}
Let the assumptions of Lemma~\ref{lem:fully_disc_gauss_E} and Theorem~\ref{thm:FD-convergence} hold and $I_h : H_0^1(\Omega)\to \mathcal P_h$ be an interpolation operator and define 
\[
\langle \divv_h(v), q_h \rangle_{\mathcal P_h^*,\mathcal P_h}
:=-(v,\nabla q_h)
\qquad \forall q_h\in \mathcal P_h.
\]
Then for every $\phi\in C_c^\infty(\Omega)$ and $\eta\in C_c^\infty(0,T)$,
\[
\int_0^T \eta'(t)\,\langle \divv_h(\varepsilon E_h^{\mathrm{pl}}), I_h\phi\rangle\,dt
-
\int_0^T \eta(t)\,\langle \divv_h(\sigma E_h^{1/2}(t)-f^{1/2}(t)), I_h\phi\rangle\,dt
=0.
\]
Passing to the limit $h\to0$ and $\Delta t \to0$ yields
\[
\partial_t \divv(\varepsilon E)+\divv(\sigma E-f)=0
\qquad\text{in }\mathcal D'(\Omega\times(0,T)).
\]
\end{lemma}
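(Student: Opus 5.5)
The plan has two parts: first derive the discrete identity directly from Lemma~\ref{lem:fully_disc_gauss_E}, then pass to the limit term by term using the weak convergences of Lemma~\ref{lem:pl-vs-midpoint}.

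\emph{Discrete identity.} Fix $\phi\in C_c^\infty(\Omega)$ and $\eta\in C_c^\infty(0,T)$, and set $q_h:=I_h\phi\in\mathcal P_h$. This function is time independent. By the de Rham complex, $\nabla q_h\in\mathcal N_h^0$, so Lemma~\ref{lem:fully_disc_gauss_E} gives, for each $n$,
\[
(\varepsilon\,\delta_t E_h^{n+1},\nabla q_h)+(\sigma E_h^{n+\frac12}-f^{n+\frac12},\nabla q_h)=0 .
\]
We multiply by $\Delta t\,\eta^{n+1/2}$ and sum over $n$. The discrete integration by parts \eqref{eq:disc-ibp0}, applied with $u=E_h$ and $v_h=\varepsilon\nabla q_h$, converts the first sum into $-\int_0^T(\varepsilon E_h^{\mathrm{pl}},\nabla q_h)\eta'\,dt$. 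The second sum equals $\int_0^T(\sigma E_h^{1/2}-f^{1/2},\nabla q_h)\eta\,dt$. This holds because $E_h^{1/2}$ and $f^{1/2}$ are piecewise constant in time, and $\eta^{n+1/2}\Delta t=\int_{I_n}\eta\,dt$. Rewriting both terms through the definition $\langle\divv_h v,q_h\rangle=-(v,\nabla q_h)$ yields exactly the displayed identity, with the signs as stated.

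\emph{Passage to the limit.} We use that $I_h\phi\to\phi$ in $H^1_0(\Omega)$, so $\nabla I_h\phi\to\nabla\phi$ strongly in $L^2(\Omega)^3$. This requires choosing $I_h$ as, for example, a Scott--Zhang or Lagrange interpolant, using the smoothness of $\phi$. The three terms are handled as follows.
\begin{itemize}
\item For the first term we write $\nabla I_h\phi=\nabla\phi+(\nabla I_h\phi-\nabla\phi)$. The weak-$*$ convergence $E_h^{\mathrm{pl}}\stackrel{*}{\rightharpoonup}E$ in $L^\infty(0,T;L^2)$ is tested against $\eta'\varepsilon\nabla\phi\in L^1(0,T;L^2)$, using the symmetry of $\varepsilon$. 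The remainder is bounded by $C\|\eta'\|_{L^1}\|\varepsilon\|_{L^\infty}\|\nabla I_h\phi-\nabla\phi\|_{L^2}\to0$, using the uniform bound of Lemma~\ref{lem:unifboundEBpl}.
\item The $\sigma E_h^{1/2}$ term is treated in the same way. Here we use the convergence of the midpoint lift in duality with $L^1(0,T;L^2)$ from Lemma~\ref{lem:pl-vs-midpoint}, together with the uniform $L^2(0,T;L^2)$ bound to control the remainder.
\item For the $f^{1/2}$ term, the cell average satisfies $f^{1/2}=\Pi_{\Delta t}f\to f$ strongly in $L^2(0,T;L^2)$. This follows from the same averaging argument as Lemma~\ref{lem:time-average-convergence}, and is in any case bounded by Lemma~\ref{lem:fbound}. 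Pairing it with the strongly convergent $\eta\nabla I_h\phi$ therefore passes to the limit.
\end{itemize}

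The limit identity is
\[
-\int_0^T\eta'(\varepsilon E,\nabla\phi)\,dt+\int_0^T\eta\,(\sigma E-f,\nabla\phi)\,dt=0
\]
for all $\phi$ and $\eta$. This is the distributional statement $\partial_t\divv(\varepsilon E)+\divv(\sigma E-f)=0$ on tensor-product test functions $\eta\otimes\phi$. Since finite sums of such products are dense in $C_c^\infty(\Omega\times(0,T))$ in the topology that matters here, the identity extends to $\mathcal D'(\Omega\times(0,T))$. Convergence of the full sequence, rather than a subsequence, follows from the uniqueness statement in Theorem~\ref{thm:FD-convergence}.

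\emph{Main obstacle.} I expect the delicate points to be the careful sign and time-index bookkeeping in the discrete integration by parts, in particular the compact support of $\eta$, which kills the boundary terms, and the strong convergence of $\nabla I_h\phi$. The latter is essential because every state sequence converges only weakly, so each product must contain one strongly convergent factor.
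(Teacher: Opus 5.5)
Your proposal is correct and follows the paper's route: test the CN Amp\`ere equation with $\nabla I_h\phi\in\mathcal N_h^0$, multiply by $\Delta t\,\eta^{n+1/2}$, sum, and use the discrete integration by parts to obtain the identity, then pass to the limit by pairing the weak(-$*$) convergences of Lemma~\ref{lem:pl-vs-midpoint} with the strongly convergent $\nabla I_h\phi$. The paper omits the limit passage and defers to its earlier work; your term-by-term argument, together with the tensor-product density step that reaches $\mathcal D'(\Omega\times(0,T))$, supplies that omitted step correctly.
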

\begin{proof} 
Let $\phi\in C_c^\infty(\Omega)$. Recalling that $I_h\phi \to \phi \quad \text{in } H_0^1(\Omega),
\quad\text{we have that }\quad \nabla I_h\phi \to \nabla\phi \text{ in }L^2(\Omega)^3.$ And since $\nabla \mathcal P_h\subset \mathcal N_h^0$, we can choose \(\psi_h=\nabla I_h\phi \)
as a test function in \eqref{eq:CN_ampere}. 
Multiplying \eqref{eq:CN_ampere} by $\eta^{n+1/2}\Delta t$, summing over $n$ and applying \eqref{eq:disc-ibp0} to the first sum (with $u=E_h$ and test $\psi_h=\nabla I_h\phi$)
we obtain, for $t$ a.e.\ in $(0,T)$, (because
\(\curl(\nabla I_h\phi)=0,\)):
\[
-\int_0^T (\varepsilon E_h^{\mathrm{pl}},\nabla I_h\phi)\,\eta'(t)\,dt
+ \int_0^T \!\Big[(\sigma E_h^{1/2} - f^{1/2},\nabla I_h\phi)\Big]\eta\,dt
= 0.
\]
The rest of the proof follows along similar lines as in \cite[Lemma 6]{HAntil_2025a} with $E_h^{\mathrm{pl}}$ instead of $E_h$ and is omitted for brevity.
\end{proof}

\section{Optimal control of the time-dependent Maxwell's equations}
\label{sec:ocp}

Next we analyze the optimal control problem.
Throughout the remainder, we adopt Assumption~\ref{ass:data} on the problem coefficients.
Additionally, to simplify the notation, we consider
$
 \Omega_\text{src} = \Omega_\text{obs} = \Omega_\text{ctrl} = \Omega.
$
Our analysis remains valid when $\Omega_\text{src}$, $\Omega_\text{obs}$,  and $\Omega_\text{ctrl}$ are proper subsets of~$\Omega$.

\subsection{State equation and the control-to-state map}
Given a control $z\in Z$, the state $(E,B)$ is defined as the solution to \eqref{eq:state-ocp} according to Definition~\ref{def:weak_soln}. We state the well-posedness next.

\begin{lemma}[Well-posedness and bounded control-to-state map]\label{lem:S-bounded}
Under Assumption~\ref{ass:data}, for every $z\in Z$ there exists a unique weak solution 
$(E,B)$ of \eqref{eq:state-ocp} in the sense of Definition~\ref{def:weak_soln}
(with $f := I_{\rm src}+\mathrm{curl}\,z$). Moreover,
\[
\begin{aligned}
\| \partial_t E \|_{L^2(0,T;H_0(\mathrm{curl};\Omega)^*)}  
&+ \| \partial_t B \|_{L^2(0,T;H(\curl; \Omega)^*)} 
+  \|E\|_{C([0,T];L^2(\Omega)^3)} + \|B\|_{C([0,T];L^2(\Omega))} \\
&\;\le\; C\Big( \|I_{\rm src}\|_{L^2(0,T;L^2(\Omega))} + \|z\|_{Z} + \|E_0\|_{L^2(\Omega)}+\|B_0\|_{L^2(\Omega)}\Big),
\end{aligned}
\]
with a constant $C>0$ independent of $z$. Consequently, the control-to-state map
\[
S:Z\to Y,\qquad S(z)=(E,B),
\]
is \emph{affine} and bounded. Writing $S(z)=S_0 z + (E^{\rm src},B^{\rm src})$, the operator
$S_0:Z\to Y$ is linear and bounded. Here $S_0$ corresponds to $I_{\rm src} \equiv 0$.  
\end{lemma}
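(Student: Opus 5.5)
The proof reduces to Theorem~\ref{thm:main} by treating the control as part of the right-hand side. For $z\in Z=L^2(0,T;H(\curl;\Omega))$ we have $\curl z\in L^2(0,T;L^2(\Omega)^3)$, with
\[
\|\curl z\|_{L^2(0,T;L^2(\Omega))}^2=\int_0^T\|\curl z(t)\|_{L^2(\Omega)}^2\,dt\le \alpha_2^{-1}\|z\|_Z^2 .
\]
Measurability of $t\mapsto \curl z(t)$ is immediate because $\curl:H(\curl;\Omega)\to L^2(\Omega)^3$ is bounded and linear. Extending $I_{\rm src}$ by zero outside $\Omega_{\rm src}$ (here $\Omega_{\rm src}=\Omega$ anyway), the datum $f:=I_{\rm src}+\curl z$ lies in $L^2(0,T;L^2(\Omega)^3)$ and satisfies
\[
\|f\|_{L^2(0,T;L^2(\Omega))}\le \|I_{\rm src}\|_{L^2(0,T;L^2(\Omega))}+\alpha_2^{-1/2}\|z\|_Z .
\]
Assumption~\ref{ass:data} therefore holds with this $f$. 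Theorem~\ref{thm:main} then gives existence and uniqueness of a weak solution in the sense of Definition~\ref{def:weak_soln}. Inserting the bound on $f$ into \eqref{eq:apriori_bound} yields the stated estimate, with $C$ replaced by $C\max\{1,\alpha_2^{-1/2}\}$, which does not depend on $z$.

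The affine structure comes from linearity and uniqueness. Let $(E^{\rm src},B^{\rm src})$ be the weak solution with data $(I_{\rm src},E_0,B_0)$ and $z=0$. Let $S_0z$ be the weak solution with data $(\curl z,0,0)$. Since \eqref{eq:MaxWeak} is linear in $(E,B,f)$ and in the initial data, the sum $S_0z+(E^{\rm src},B^{\rm src})$ is a weak solution for $(f,E_0,B_0)$. By uniqueness it equals $S(z)$. The same uniqueness argument shows that $S_0$ is linear: for $z_1,z_2\in Z$ and scalars $a,b$, the combination $aS_0z_1+bS_0z_2$ solves the problem with source $\curl(az_1+bz_2)$ and zero initial data, so it must equal $S_0(az_1+bz_2)$.

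For boundedness of $S_0$, apply the a priori estimate with zero initial data and zero source, which gives
\[
\|S_0z\|_{C([0,T];L^2)^2}\le C\|z\|_Z .
\]
The embedding $C([0,T];L^2(\Omega)^3)\hookrightarrow L^2(0,T;L^2(\Omega)^3)$ has norm $\sqrt T$, and restricting to $\Omega_{\rm obs}$ does not increase norms. Hence $\|S_0z\|_Y\le C\sqrt T\|z\|_Z$, and $S=S_0+(E^{\rm src},B^{\rm src})$ is affine and bounded as a map $Z\to Y$.

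There is no genuine obstacle. The only point needing care is the measurability and $L^2$-in-time integrability of $\curl z$, together with the fact that the constant obtained depends on $\alpha_2$ through the definition of $\|\cdot\|_Z$; both are handled above. For the magnetic Gauss law one can note that the divergence condition on $B_0$ is unaffected by the control, so Definition~\ref{def:weak_soln} applies verbatim.
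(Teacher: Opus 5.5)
Your proposal is correct and follows the same route as the paper. You set $f:=I_{\rm src}+\mathrm{curl}\,z\in L^2(0,T;L^2(\Omega)^3)$, invoke Theorem~\ref{thm:main} with the triangle inequality, and get linearity of $S_0$ from linearity of the state system; you also spell out what the paper leaves implicit, namely $\|\mathrm{curl}\,z\|_{L^2(0,T;L^2(\Omega))}\le\alpha_2^{-1/2}\|z\|_Z$ (so $C$ depends on $\alpha_2$), the uniqueness argument behind the affine splitting with zero initial data in $S_0$, and the embedding $C([0,T];L^2(\Omega)^3)\hookrightarrow L^2(0,T;L^2(\Omega)^3)$ into $Y$.
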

\begin{proof}
Set $f:=I_{\rm src}+\mathrm{curl}\,z \in L^2(0,T;L^2(\Omega)^3)$ for $z\in Z$.
The a priori estimate follows from Theorem~\ref{thm:main} after using 
$\|f\|_{L^2(0,T;L^2(\Omega)^3)} \le \|I_{\rm src}\|_{L^2(0,T;L^2(\Omega)^3)} + \|\mathrm{curl}\,z\|_{L^2(0,T;L^2(\Omega)^3)}$, we immediately obtain the stated bound and boundedness of $S$.
Linearity of $S_0$ follows from the linearity of the state system in the forcing $\mathrm{curl}\,z$.
\end{proof}

\begin{remark}
\rm 
    To incorporate $\varepsilon$ and $\mu^{-1}$ in the objective function $J$ in \eqref{eq:intro_obj}, we equip $Y$ with the weighted scalar product 
    \[
    ((E,B), e,b))_Y := (\varepsilon E, e) + (\mu^{-1} B, b) \qquad \forall (E,B), (e,b) \in Y,
    \]
    which has the corresponding weighted norm
    \[
    \|(E,B)\|_Y := \sqrt{ (\varepsilon E, E) + (\mu^{-1} B, B)} \qquad \forall (E,B) \in Y.
    \]    
    Due to the assumptions on $\varepsilon$ and $\mu$, 
    $(\cdot, \cdot)_Y$ indeed defines an inner product on $Y$ and the bound in Lemma~\ref{lem:S-bounded} also holds using this weighted norm.
\end{remark}

\subsection{The reduced optimal control problem}
Having defined the control-to-state map $S(z)$, we formulate a reduced optimal control problem.
Given the target fields $(E_d,B_d)\in Y$ and $\alpha_1,\alpha_2>0$, we consider 
\begin{equation}\label{eq:reduced}
\min_{z\in Z}\;
\left\{ \mathcal{J}(z)
:= \frac12 \| S(z) - (E_d,B_d) \|_Y^2
 + \frac{\alpha_1}{2} \| z \|_{L^2(0,T;L^2(\Omega))}^2
 + \frac{\alpha_2}{2} \| \mathrm{curl}\,z \|_{L^2(0,T;L^2(\Omega))}^2 \right\} .
\end{equation}
  
\begin{theorem}[Existence and uniqueness of an optimal control]\label{thm:exist-unique-ocp}
Under Assumption~\ref{ass:data}, problem \eqref{eq:reduced} admits a unique minimizer $z^*\in Z$.
\end{theorem}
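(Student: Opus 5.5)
The plan is to apply the direct method of the calculus of variations in the Hilbert space $Z$, and then to obtain uniqueness from strict convexity.

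First, note that $Z=L^2(0,T;H(\curl;\Omega))$ with the inner product inducing $\|z\|_Z^2=\alpha_1\|z\|^2_{L^2(0,T;L^2)}+\alpha_2\|\curl z\|^2_{L^2(0,T;L^2)}$ is a Hilbert space. This norm is equivalent to the standard one because $\alpha_1,\alpha_2>0$. Hence the regularization term in \eqref{eq:reduced} equals $\tfrac12\|z\|_Z^2$.

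By Lemma~\ref{lem:S-bounded}, we can write $S(z)=S_0z+(E^{\rm src},B^{\rm src})$ with $S_0:Z\to Y$ linear and bounded. The boundedness into $Y$ uses the $C([0,T];L^2)$ bound and $T<\infty$, together with the equivalence of the weighted $Y$-norm from the Remark. Therefore
\[
\mathcal J(z)=\tfrac12\|S_0z-w\|_Y^2+\tfrac12\|z\|_Z^2,\qquad w:=(E_d,B_d)-(E^{\rm src},B^{\rm src})\in Y,
\]
which is a continuous quadratic functional on $Z$.

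\textbf{Existence.} The functional satisfies $\mathcal J\ge 0$ and $\mathcal J(z)\ge\tfrac12\|z\|_Z^2$, so it is coercive. Take a minimizing sequence $\{z_k\}$. Coercivity makes it bounded in $Z$, so a subsequence converges weakly, $z_k\rightharpoonup z^*$ in $Z$. Since $S_0$ is linear and bounded, it is weak–weak continuous, giving $S_0z_k-w\rightharpoonup S_0z^*-w$ in $Y$. Both terms of $\mathcal J$ are squared Hilbert norms and hence weakly lower semicontinuous. Consequently $\mathcal J(z^*)\le\liminf\mathcal J(z_k)=\inf\mathcal J$, and $z^*$ is a minimizer. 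Equivalently, one could argue that $\mathcal J$ is convex and continuous, hence weakly lsc, via Mazur's lemma.

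\textbf{Uniqueness.} The map $z\mapsto\tfrac12\|S_0z-w\|_Y^2$ is convex, being a convex function composed with an affine map. The map $z\mapsto\tfrac12\|z\|_Z^2$ is strictly convex, and in fact $1$-strongly convex, since for $\lambda\in(0,1)$
\[
\|\lambda z_1+(1-\lambda)z_2\|_Z^2=\lambda\|z_1\|_Z^2+(1-\lambda)\|z_2\|_Z^2-\lambda(1-\lambda)\|z_1-z_2\|_Z^2 .
\]
So $\mathcal J$ is strictly convex. If $z_1\neq z_2$ were both minimizers, then their midpoint would give a strictly smaller value, which is a contradiction.

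\textbf{Main obstacle.} The only point needing care is that the regularization controls the full $Z$-norm, including $\curl z$. This is what gives coercivity in $Z$ and not merely in $L^2$, and it is what makes weak limits stay in $Z$. It is guaranteed by $\alpha_2>0$; with $\alpha_2=0$ the argument would break down, because $S_0$ is not defined on $L^2$ controls in this curl-lifted form.

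The other ingredient is the affine and bounded structure of $S$, which is supplied by Lemma~\ref{lem:S-bounded}. Nothing beyond it is needed.
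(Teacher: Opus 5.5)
Your proposal is correct and follows essentially the same route as the paper. Both use the direct method: coercivity from the full $Z$-norm regularization gives weak compactness of a minimizing sequence, the bounded linear $S_0$ is weakly continuous, and the two quadratic terms are weakly lower semicontinuous; uniqueness then follows from strict convexity guaranteed by $\alpha_1,\alpha_2>0$. Your rewriting $\mathcal J(z)=\tfrac12\|S_0z-w\|_Y^2+\tfrac12\|z\|_Z^2$ and the explicit strong-convexity identity only make precise steps that the paper states briefly.
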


\begin{proof}
The result follows immediately from the direct method in the calculus of variations.
Let $\{z_n\}\subset Z$ be a minimizing sequence. Set $(E_n,B_n):=S(z_n)$.
By Lemma~\ref{lem:S-bounded} and the definition of $\mathcal{J}$,
\[
\lim_{n \rightarrow \infty} \mathcal{J}(z_n) \;=\; \inf_{z} \mathcal{J}(z) \le \mathcal{J}(0) <\infty,
\]
hence (up to a subsequence) $z_n \rightharpoonup z^*$ in $Z$. By boundedness and linearity
of $S_0$, we have $S_0 z_n \rightharpoonup S_0 z^*$ in $Y$; since $S$ is affine,
$S(z_n) \rightharpoonup S(z^*)$ in $Y$.

The functional $\mathcal{J}$ is the sum of: (i) a convex continuous quadratic in $Y$ composed
with the affine map $S$, hence weakly lower semicontinuous in $z$; (ii) the strictly convex,
coercive quadratic form
\[
\frac{\alpha_1}{2}\|z\|_{L^2(0,T;L^2(\Omega))}^2+\frac{\alpha_2}{2}\|\mathrm{curl}\,z\|_{L^2(0,T;L^2(\Omega))}^2,
\]
which is weakly lower semicontinuous on $Z$. Therefore,
\[
\mathcal{J}(z^*) \;\le\; \liminf_{n\to\infty} \mathcal{J}(z_n)
 = \inf_{z\in Z} \mathcal{J}(z).
\]
Thus $z^*$ is a minimizer. Uniqueness follows from strict convexity of $\mathcal{J}$ on $Z$,
which in turn is guaranteed by $\alpha_1, \alpha_2>0$.
\end{proof}

\subsection{Adjoint and characterization of $S_0^*$}
\label{sec:adjoint}
We now characterize the Hilbert adjoint $S_0^*:Y\to Z^*$ without assuming any extra spatial regularity on the adjoint variables. Let $(q_E,q_B)\in Y$ be arbitrary. For each $w\in Z$, let $(\delta E,\delta B)=S_0 w$ denote the solution of \eqref{eq:state-ocp} with $z$ replaced by $w$ and homogeneous initial data.

\begin{definition}[Weak solution to adjoint]
\label{def:adjoint_veryweak}
A pair $(p_E,p_B)$ is called an adjoint for $(q_E,q_B)\in Y$ if
\[
\begin{aligned}
p_E &\in H^1\!\big(0,T;H_0(\mathrm{curl};\Omega)^*\big)\cap C\big([0,T];L^2(\Omega)^3\big), \\
p_B &\in H^1\!\big(0,T;H(\curl;\Omega)^*\big)\cap C\big([0,T];L^2(\Omega)^3\big) ,
\end{aligned}
\]
$p_E(T)=0$, $p_B(T)=0$ in $L^2(\Omega)^3$ such that
\begin{equation}\label{eq:adjoint_identity}
\begin{aligned}
\langle -\varepsilon \partial_t \, p_E , \, \psi \rangle_{H_0(\curl;\Omega)^*,H_0(\curl;\Omega)} + (p_E , \sigma \psi) 
+( \mathrm{curl}\,\psi,\,p_B )
&=  (q_E,\psi) \quad \forall \psi \in H_0(\mathrm{curl};\Omega)) \\
\langle -\partial_t p_B,\, \phi\rangle_{H(\curl;\Omega)^*,H(\curl;\Omega)}
- ( \mu^{-1}\phi,\, \mathrm{curl}\, p_E ) 
&= (q_B,\phi) \quad \forall \phi \in H(\curl;\Omega).
\end{aligned}
\end{equation}
\end{definition}
Notice that existence and uniqueness of solution to the adjoint equation \eqref{eq:adjoint_identity}
follows by the same argument as in Theorem~\ref{thm:main}. 

\begin{proposition}[Characterization of $S_0^*$]
\label{prop:S0star}
Let
$S_0:Z\to Y$ be the linear control-to-state map that assigns to
$w\in Z$ the unique weak solution $(\delta E,\delta B)=S_0 w$ of
\[
\begin{cases}
\varepsilon\,\partial_t \delta E - \mathrm{curl}\,(\mu^{-1}\delta B)+\sigma \delta E=\mathrm{curl}\,w
&\text{in }H_0(\mathrm{curl};\Omega)^* \mbox{ a.e. } t \in (0,T) ,\\[2pt]
\partial_t \delta B+\mathrm{curl}\,\delta E=0
&\text{in }H(\curl;\Omega)^* \mbox{ a.e. } t \in (0,T),\\[2pt]
\delta E(0)=0,\quad \delta B(0)=0 \quad \mbox{in } L^2(\Omega)^3 .
\end{cases}
\]
For any $(q_E,q_B)\in Y$, let $(p_E,p_B)$ be the (weak) adjoint 
from Definition~\ref{def:adjoint_veryweak} with terminal conditions $p_E(T)=0$, $p_B(T)=0$. 
Then, for every $w\in Z$ with $(\delta E,\delta B)=S_0 w$,
\begin{equation}\label{eq:S0star_identity}
\int_0^T\!\big[(q_E,\delta E)+(q_B,\delta B)\big]\,dt
=\int_0^T ( p_E,\,\mathrm{curl}\,w )\,dt.
\end{equation}
Equivalently,
\[
S_0^*(q_E,q_B)=\mathcal{C}^* p_E \in Z^*, \qquad
\text{where }\ \mathcal{C}:Z\to L^2(0,T;L^2(\Omega)^3),\ \ \mathcal{C}z:=\mathrm{curl}\,z,
\]
and $\mathcal{C}^*:L^2(0,T;L^2(\Omega)^3)\to Z^*$ is given by
\(\ \langle \mathcal{C}^*v, z\rangle_{Z^*,Z} = \int_0^T (v,\mathrm{curl}\,z)\) dt.
\end{proposition}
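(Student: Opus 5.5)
The plan is the classical duality (Lagrange) argument: test the adjoint system with the linearized state, test the state system with the adjoint, integrate over $(0,T)$, and cancel all coupling terms using symmetry of $\varepsilon,\sigma,\mu^{-1}$. Since all variables have only $H^1(0,T;V^*)\cap C([0,T];L^2)$ regularity, we cannot insert $\delta E$ directly into the adjoint equation pointwise in time as a strong function. The natural test spaces are still right, however. The adjoint $E$-equation is tested with $\psi=\delta E(t)$, and the adjoint $B$-equation with $\phi=\delta B(t)$. The state equations are tested with $p_E(t)\in H_0(\curl)$ and $p_B(t)\in H(\curl)$. This requires a.e. in time that $\delta E,p_E\in H_0(\curl;\Omega)$ and $\delta B,p_B\in H(\curl;\Omega)$, which the energy class does not provide.

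\emph{Step 1 (regularization).} I would first prove \eqref{eq:S0star_identity} for smooth data and then pass to the limit. Concretely, take $w$ and $(q_E,q_B)$ in a dense subspace, for instance functions in $C^1([0,T])$ with values that make the solutions strong. Alternatively, I would use the Galerkin/CN discrete solutions of Section~\ref{sec:forward-num}, where all fields lie in $\mathcal N_h^0\times\mathcal{RT}_h\subset H_0(\curl)\times H(\divv)$. At the discrete level the computation is exact and uses only the discrete analogues. The semi-discrete Galerkin version (continuous in time) is the cleanest choice. There, $\delta E_h,p_{E,h}\in H^1(0,T;\mathcal N_h^0)$ and $\delta B_h,p_{B,h}\in H^1(0,T;\mathcal{RT}_h)$, and the adjoint Faraday coupling uses $\curl\mathcal N_h^0\subset\mathcal{RT}_h$ exactly as in Lemma~\ref{lem:CN-wp}.

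\emph{Step 2 (the identity for regular solutions).} Sum the adjoint equations tested with $(\delta E,\delta B)$ and integrate in time:
\[
\int_0^T\big[\langle-\varepsilon\partial_t p_E,\delta E\rangle-\langle\partial_t p_B,\delta B\rangle+(p_E,\sigma\delta E)+(\curl\delta E,p_B)-(\mu^{-1}\delta B,\curl p_E)\big]dt=\int_0^T[(q_E,\delta E)+(q_B,\delta B)]dt.
\]
Next, integrate by parts in time using the product rule $\frac{d}{dt}(\varepsilon p_E,\delta E)=\langle\varepsilon\partial_tp_E,\delta E\rangle+\langle\varepsilon\partial_t\delta E,p_E\rangle$. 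This rule is valid in the Gelfand-triple setting and holds trivially for Galerkin functions. The boundary terms vanish because $\delta E(0)=\delta B(0)=0$ and $p_E(T)=p_B(T)=0$. The left side becomes
\[
\int_0^T\big[\langle\varepsilon\partial_t\delta E,p_E\rangle+(\sigma\delta E,p_E)-(\mu^{-1}\delta B,\curl p_E)+\langle\partial_t\delta B,p_B\rangle+(\curl\delta E,p_B)\big]dt.
\]
By the state equations tested with $p_E$ and $p_B$, this equals $\int_0^T(\curl w,p_E)\,dt+0$. That is exactly \eqref{eq:S0star_identity}.

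\emph{Step 3 (passage to the limit).} Both sides of \eqref{eq:S0star_identity} are continuous in the relevant data. By Lemma~\ref{lem:S-bounded}, $w\mapsto\delta E,\delta B$ is continuous $Z\to Y$. The analogous a priori bound of Theorem~\ref{thm:main}, applied backward in time to the adjoint, shows that $(q_E,q_B)\mapsto p_E$ is continuous $Y\to L^2(0,T;L^2)$. With Galerkin approximations, Theorem~\ref{thm:FD-convergence} (and its adjoint counterpart) gives weak convergence of the discrete states and adjoints. The right side is $\int(\curl w,p_{E,h})$ with $\curl w$ fixed in $L^2$, and the left side is $\int(q_E,\delta E_h)+(q_B,\delta B_h)$ with $q$ fixed. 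So both sides pass to the limit under weak convergence, and no strong convergence is needed. There is one subtlety: the discrete state must be driven by $\curl w$ projected onto $\mathcal N_h^0$, i.e.\ tested by $(\curl w,\psi_h)$. This is consistent with how the forward scheme treats $f$.

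Finally, \eqref{eq:S0star_identity} reads $\big((q_E,q_B),S_0w\big)_{L^2}=\langle\mathcal C^*p_E,w\rangle_{Z^*,Z}$, which gives $S_0^*(q_E,q_B)=\mathcal C^*p_E$. If the weighted product of $Y$ is used instead, $q$ is simply replaced by $(\varepsilon q_E,\mu^{-1}q_B)$. The main obstacle is Step 1, namely justifying the time integration by parts and the mixed pairings at weak regularity. My first choice is the Galerkin route, since it reuses the existing convergence theory rather than requiring a new regularity result.
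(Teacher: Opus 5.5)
Your proposal is correct and is essentially the paper's proof. Both arguments use Galerkin approximations of the state and adjoint, derive the discrete duality identity by cross-testing the two systems and integrating by parts in time (the boundary terms vanish since $\delta E_N(0)=\delta B_N(0)=0$ and $p_E^N(T)=p_B^N(T)=0$), and then pass to the limit using only weak convergence of $\delta E_N,\delta B_N,p_E^N$ against the fixed functions $q_E,q_B,\curl w$. The only difference is that you use the concrete pair $\mathcal N_h^0\times\mathcal{RT}_h$, whose inclusion $\curl\mathcal N_h^0\subset\mathcal{RT}_h$ underpins the uniform energy bounds, instead of the paper's abstract spaces $Y_N\subset H_0(\curl;\Omega)$, $X_N\subset H(\curl;\Omega)$.
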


\begin{proof}
\emph{Step 1 (Galerkin spaces).}
Let $\{Y_N\}_{N\in\mathbb{N}}\subset H_0(\mathrm{curl};\Omega)$
and $\{X_N\}_{N\in\mathbb{N}}\subset H(\mathrm{curl};\Omega)$ be increasing, finite-dimensional
subspaces such that $\overline{\bigcup_N Y_N}^{\,L^2}=L^2(\Omega)^3$
and $\overline{\bigcup_N X_N}^{\,L^2}=L^2(\Omega)^3$.
We assume the corresponding $L^2$-orthogonal projections
$\Pi_N':L^2(\Omega)^3\to Y_N$ and $\Pi_N:L^2(\Omega)^3\to X_N$
satisfy $\Pi_N' v\to v$ and $\Pi_N v\to v$ in $L^2(\Omega)^3$ for every $v\in L^2(\Omega)^3$.

\smallskip
\emph{Step 2 (State Galerkin problem).}
Given $w\in Z$, define $(\delta E_N,\delta B_N)\in L^2(0,T;Y_N)\times L^2(0,T;X_N)$ with
$\delta E_N(0)=0$, $\delta B_N(0)=0$, as the unique solution of
\begin{equation}\label{eq:state_disc}
\begin{aligned}
\langle \varepsilon \partial_t \delta E_N, \psi\rangle + (\sigma \delta E_N,\psi)
- (\mu^{-1}\delta B_N, \mathrm{curl}\,\psi) &= (\mathrm{curl}\,w,\psi) &&\forall \psi\in Y_N,\\
\langle \partial_t \delta B_N,\phi\rangle + (\mathrm{curl}\,\delta E_N,\phi) &= 0 &&\forall \phi\in X_N.
\end{aligned}
\end{equation}
By standard ODE theory in finite dimensions and the a priori energy estimate already proved for the forward problem in \cite[Theorem~2]{HAntil_2025a}, we have the uniform bounds
\[
\delta E_N \ \text{bounded in } L^\infty(0,T;L^2(\Omega)),\qquad
\delta B_N \ \text{bounded in } L^\infty(0,T;L^2(\Omega)).
\]
Moreover (up to subsequences), 
\[
\delta E_N \overset{*}{\rightharpoonup} \delta E \text{ in } L^\infty(0,T;L^2(\Omega)),\quad
\delta B_N \overset{*}{\rightharpoonup} \delta B \text{ in } L^\infty(0,T;L^2(\Omega)),
\]
and, shown in \cite[Theorem~2]{HAntil_2025a}, $(\delta E,\delta B)$ is the 
unique weak solution $S_0 w$.

\smallskip
\emph{Step 3 (Adjoint Galerkin problem).}
For the given $(q_E,q_B)\in Y$, define $(p_E^N,p_B^N)\in L^2(0,T;Y_N)\times L^2(0,T;X_N)$ with
terminal conditions $p_E^N(T)=0$, $p_B^N(T)=0$ by
\begin{equation}\label{eq:adjoint_disc}
\begin{aligned}
\langle -\varepsilon \partial_t p_E^N,\psi\rangle + (\sigma p_E^N,\psi) 
+ (\mathrm{curl}\,\psi,\,p_B^N) &= (q_E,\psi) &&\forall \psi\in Y_N,\\
\langle -\partial_t p_B^N,\phi\rangle - ( \mu^{-1}\phi,\,\mathrm{curl}\, p_E^N ) &= (q_B,\phi) &&\forall \phi\in X_N.
\end{aligned}
\end{equation}
Again, finite dimensional ODE theory and the adjoint energy estimate yield the uniform bounds
\[
p_E^N,\,p_B^N \ \text{bounded in } L^\infty(0,T;L^2(\Omega)).  
\]
The proof goes along the lines of the forward problem as shown in \cite[Theorem~2]{HAntil_2025a}. 
Consequently (up to subsequences),
\[
p_E^N \overset{*}{\rightharpoonup} p_E \text{ in } L^\infty(0,T;L^2(\Omega)),\qquad
p_B^N \overset{*}{\rightharpoonup} p_B \text{ in } L^\infty(0,T;L^2(\Omega)),
\]
and the limit $(p_E,p_B)$ is the unique weak adjoint from 
Definition~\ref{def:adjoint_veryweak}.

\emph{Step 4 (Discrete duality identity).}
Fix $N\in\mathbb{N}$. 
Take in the discrete adjoint system~\eqref{eq:adjoint_disc} the test functions
$\psi=\delta E_N(t)\in Y_N \subset H_0(\mathrm{curl};\Omega)$ and
$\phi=\delta B_N(t)\in X_N \subset H(\curl;\Omega)$.
Integrating over~$(0,T)$ and integrating by parts in time
(using $p_E^N(T)=p_B^N(T)=0$ and $\delta E_N(0)=\delta B_N(0)=0$), we get
\begin{align*}
\int_0^T\!\!\big[(q_E,\delta E_N)+(q_B,\delta B_N)\big]\,dt
&=\int_0^T\!\!\Big(
\langle \varepsilon \partial_t \delta E_N, p_E^N\rangle 
+ (\sigma \delta E_N,p_E^N)
+ (\mathrm{curl}\,\delta E_N,\,p_B^N) \\
&\hspace{7.0em} 
+ \langle \partial_t \delta B_N, p_B^N\rangle
- (\mu^{-1} \delta B_N,\, \mathrm{curl}\,  p_E^N)\Big)\,dt. \tag{$\star$}
\end{align*}

Next, take in the discrete state increment system \eqref{eq:state_disc} the test
functions $\psi=p_E^N(t)\in Y_N\subset H_0(\mathrm{curl};\Omega)$ and 
$\phi=p_B^N(t)\in X_N\subset H(\mathrm{curl};\Omega)$, and integrate over $(0,T)$:
\begin{align*}
\int_0^T\!\!\Big(
\langle \varepsilon \partial_t \delta E_N, p_E^N\rangle 
+ (\sigma \delta E_N,p_E^N)
- (\mu^{-1}\delta B_N, \mathrm{curl}\,p_E^N)
+ \langle \partial_t \delta B_N,  p_B^N\rangle 
+ (\mathrm{curl}\,\delta E_N,p_B^N)
\Big)\,dt \\
=\int_0^T (\mathrm{curl}\,w,\,p_E^N)\,dt. \tag{$\star\star$}
\end{align*}
From \((\star\star)\) and \((\star)\), 
we obtain the discrete duality identity
\[
\int_0^T\!\!\big[(q_E,\delta E_N)+(q_B,\delta B_N)\big]\,dt
=\int_0^T (\mathrm{curl}\,w,\,p_E^N)\,dt.
\tag{$\ddagger$}
\]
This is the key identity used in the passage to the limit to identify $S_0^*$.

\smallskip
\emph{Step 5 (Pass $N\to\infty$).}
Since $\delta E_N\rightharpoonup\delta E$ and $\delta B_N\rightharpoonup\delta B$ weakly in $L^2(0,T;L^2(\Omega))$
and $q_E,q_B\in L^2(0,T;L^2(\Omega))$, the left-hand side of \((\ddagger)\) converges to
\(\int_0^T[(q_E,\delta E)+(q_B,\delta B)]\,dt\) and right-hand side converges to 
$\int_0^T (\mathrm{curl}\, w, p_E) \, dt$. 
Hence we have:
\[
\int_0^T\!\!\big[(q_E,\delta E)+(q_B,\delta B)\big]\,dt
=\int_0^T (\mathrm{curl}\,w,\,p_E)\,dt.
\]
which is exactly \eqref{eq:S0star_identity}.

\smallskip
\emph{Step 6 (Identification of $S_0^*$).}
By definition of the Hilbert adjoint,
\[
\begin{aligned}
\langle S_0^*(q_E,q_B),\,w\rangle_{Z^*,Z}
&=\int_0^T\!\!\big[(q_E,\delta E)+(q_B,\delta B)\big]\,dt \\
&=\int_0^T (p_E,\mathrm{curl}\,w)\,dt
= \langle \mathcal{C}^* p_E , w \rangle_{Z^*,Z} .
\end{aligned}
\]
Thus $S_0^*(q_E,q_B)=\mathcal{C}^*p_E$ as a functional in $Z^*$.
\end{proof}

\subsection{First–order optimality conditions}
\label{sec:foc}

\begin{theorem}[Optimality system]
\label{thm:optimality_system_new}
Let $\bar z\in Z$ be the unique minimizer from Theorem~\ref{thm:exist-unique-ocp}, and let $(\bar E,\bar B)=S\bar z$.
Then there exists a unique adjoint $(\bar p_E,\bar p_B)$ in the sense of Definition~\ref{def:adjoint_veryweak} with data
$(q_E,q_B)=(\varepsilon(\bar E-E_d),\ \mu^{-1}(\bar B-B_d))$ such that the variational optimality condition
\begin{equation}\label{eq:var_opt_new}
\alpha_1\int_0^T (\bar z,w)\,dt
+\alpha_2\int_0^T (\mathrm{curl}\,\bar z,\mathrm{curl}\,w)\,dt
+\int_0^T (\bar p_E,\mathrm{curl}\,w)\,dt
=0\qquad\forall\,w\in Z
\end{equation}
holds. Equivalently, in $Z^*$,
\begin{equation}\label{eq:Euler_in_Zstar_new}
\alpha_1\,\bar z\;+\;\mathcal{C}^*\!\big(\alpha_2\,\mathcal{C}\bar z+\bar p_E\big)=0 \quad\text{in } Z^* 
\qquad \text{with }\ \mathcal{C}z:=\mathrm{curl}\,z,\ \ \langle \mathcal{C}^*v,w\rangle_{Z^*,Z}= \int_0^T (v,\mathrm{curl}\,w) . 
\end{equation}
\end{theorem}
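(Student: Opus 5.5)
The plan is to compute the Gâteaux derivative of the reduced functional $\mathcal J$ from \eqref{eq:reduced} at $\bar z$. We then convert the state-dependent part of that derivative into an adjoint expression using Proposition~\ref{prop:S0star}. Finally, we invoke the minimality of $\bar z$ on the whole space $Z$, which is unconstrained, so the derivative must vanish in every direction.

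\emph{Step 1: differentiating $\mathcal J$.} Fix $w\in Z$ and $\tau\in\mathbb R$. Since $S$ is affine with linear part $S_0$ (Lemma~\ref{lem:S-bounded}), we have $S(\bar z+\tau w)=S(\bar z)+\tau S_0 w$. Therefore $\mathcal J(\bar z+\tau w)$ is a quadratic polynomial in $\tau$, and its derivative at $\tau=0$ equals
\[
\big(S\bar z-(E_d,B_d),\,S_0w\big)_Y+\alpha_1\int_0^T(\bar z,w)\,dt+\alpha_2\int_0^T(\mathrm{curl}\,\bar z,\mathrm{curl}\,w)\,dt .
\]
We write $(\delta E,\delta B)=S_0w$. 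By the weighted inner product in the Remark, the first term equals $\int_0^T[(\varepsilon(\bar E-E_d),\delta E)+(\mu^{-1}(\bar B-B_d),\delta B)]\,dt$. Because $\bar z$ minimizes $\mathcal J$ over $Z$, the function $\tau\mapsto\mathcal J(\bar z+\tau w)$ has a minimum at $\tau=0$, so this derivative is zero for every $w\in Z$.

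\emph{Step 2: adjoint representation.} Set $q_E:=\varepsilon(\bar E-E_d)$ and $q_B:=\mu^{-1}(\bar B-B_d)$. First we check that $(q_E,q_B)\in Y$. Lemma~\ref{lem:S-bounded} gives $\bar E,\bar B\in C([0,T];L^2(\Omega)^3)\subset L^2(0,T;L^2(\Omega)^3)$, the targets satisfy $E_d,B_d\in L^2(0,T;L^2(\Omega)^3)$, and $\varepsilon,\mu^{-1}\in L^\infty$. Hence $q_E,q_B\in L^2(0,T;L^2(\Omega)^3)$. Existence and uniqueness of the adjoint $(\bar p_E,\bar p_B)$ in the sense of Definition~\ref{def:adjoint_veryweak} then follow as noted after that definition: the problem is the forward problem of Theorem~\ref{thm:main} after the time reversal $t\mapsto T-t$ with $L^2$ forcing. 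Identity \eqref{eq:S0star_identity} of Proposition~\ref{prop:S0star} now replaces the tracking term by $\int_0^T(\bar p_E,\mathrm{curl}\,w)\,dt$. This yields \eqref{eq:var_opt_new}.

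\emph{Step 3: the $Z^*$ form.} By definition of $\mathcal C^*$, the pairing satisfies $\langle\mathcal C^*(\alpha_2\mathcal C\bar z+\bar p_E),w\rangle_{Z^*,Z}=\alpha_2\int_0^T(\mathrm{curl}\,\bar z,\mathrm{curl}\,w)\,dt+\int_0^T(\bar p_E,\mathrm{curl}\,w)\,dt$. We identify $\alpha_1\bar z$ with the functional $w\mapsto\alpha_1\int_0^T(\bar z,w)\,dt$. With these identifications, \eqref{eq:var_opt_new} is exactly \eqref{eq:Euler_in_Zstar_new}.

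The only delicate point is the adjoint step, namely verifying that Proposition~\ref{prop:S0star} applies to the data $(q_E,q_B)$. This reduces to the membership $(q_E,q_B)\in Y$ checked in Step 2 and to well-posedness of the adjoint, which was already settled after Definition~\ref{def:adjoint_veryweak}. Everything else is algebra that follows from $S$ being affine.
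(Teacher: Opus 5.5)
Your proposal is correct and follows the paper's proof essentially step for step. You differentiate $\mathcal J$ using that $S$ is affine, trade the tracking term for $\int_0^T(\bar p_E,\mathrm{curl}\,w)\,dt$ via Proposition~\ref{prop:S0star}, set the derivative to zero at the unconstrained minimizer, and read off the $Z^*$ form; your explicit check that $(q_E,q_B)\in Y$ is a small addition the paper leaves implicit.

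One minor imprecision concerns adjoint well-posedness. After $t\mapsto T-t$ and the substitution $p_B\mapsto-\mu p_B$, the adjoint carries the source $-\mu q_B$ in the Faraday-type equation, which Theorem~\ref{thm:main} as stated does not include. So you are really relying, exactly as the paper does, on the remark that the same Galerkin/energy argument applies.
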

\begin{proof}
\emph{Step 1: Fréchet derivative of the reduced functional.}
Given $\bar z\in Z$ and a direction $w\in Z$, let $(\delta E,\delta B):=S_0 w$ denote the state increment.
By chain rule and linearity of $S_0$ we obtain
\[
\mathcal{J}'(\bar z)[w]
=\int_0^T\!\big( (\varepsilon(\bar E-E_d),\delta E)+(\mu^{-1}(\bar B-B_d),\delta B)  \big)\,dt
+\alpha_1\int_0^T (\bar z,w)\,dt
+\alpha_2\int_0^T (\mathrm{curl}\,\bar z,\mathrm{curl}\,w)\,dt.
\]

\emph{Step 2: Duality reduction via the adjoint.}
Let $(\bar p_E,\bar p_B)$ be the adjoint associated with $(q_E,q_B)=(\varepsilon(\bar E-E_d),\ \mu^{-1}(\bar B-B_d))$
in the weak sense of \eqref{eq:adjoint_identity}. By Proposition~\ref{prop:S0star} (characterization of $S_0^*$),
for every $w\in Z$,
\[
 \int_0^T\!\big( (\varepsilon(\bar E-E_d),\delta E)+(\mu^{-1}(\bar B-B_d),\delta B)\big)\,dt 
=\int_0^T (\bar p_E,\mathrm{curl}\,w)\,dt .
\]
Therefore
\[
\mathcal{J}'(\bar z)[w]
=\alpha_1\int_0^T (\bar z,w)\,dt
+\alpha_2\int_0^T (\mathrm{curl}\,\bar z,\mathrm{curl}\,w)\,dt
+\int_0^T (\bar p_E,\mathrm{curl}\,w)\,dt .
\]

\emph{Step 3: First–order condition.}
Optimality of $\bar z$ gives $\mathcal{J}'(\bar z)[w]=0$ for all $w\in Z$, which is precisely
\eqref{eq:var_opt_new}. Using the definition of the Hilbert adjoint $\mathcal{C}^* : L^2(0,T;L^2(\Omega)^3)\to Z^*$,
\[
\int_0^T (\bar p_E,\mathrm{curl}\,w)\,dt=\langle \mathcal{C}^*\bar p_E,\,w\rangle_{Z^*,Z},
\qquad
\int_0^T (\mathrm{curl}\,\bar z,\mathrm{curl}\,w)\,dt
=\langle \mathcal{C}^*\mathcal{C}\bar z,\,w\rangle_{Z^*,Z},
\]
hence \eqref{eq:var_opt_new} is equivalent to the operator equation \eqref{eq:Euler_in_Zstar_new} in $Z^*$.

\emph{Sufficiency (and uniqueness).}
Since $\mathcal{J}$ is strictly convex and coercive on the Hilbert space $Z$, 
the first–order condition is also sufficient, so the solution to \eqref{eq:var_opt_new}–\eqref{eq:Euler_in_Zstar_new}
is the unique global minimizer~$\bar z$.
\end{proof}

\section{Fully discrete optimal control problem}
\label{sec:discrete-ocp}

We now develop and analyze a structure-preserving discretization for the solution of the optimal control problem.

\paragraph{Discrete control space and norms.}
We choose the control in the N\'ed\'elec space, piecewise constant in time.
Let
\[
  \mathcal Z_h \;:=\; \mathcal N_h, \qquad
  \mathcal Z_{h,\Delta t}
  \;:=\;\bigl\{\, z_h^{1/2}:\ (0,T]\to \mathcal Z_h \;\text{right–continuous, }
  z_h^{1/2}(t)=z_h^{n+\frac12}\in \mathcal Z_h \text{ on } (t^n,t^{n+1}] \,\bigr\}.
\]
For $z_h^{1/2}\in\mathcal Z_{h,\Delta t}$ we use the midpoint (Bochner) norms
\[
\|z_h^{1/2}\|_{L^2(0,T;L^2(\Omega))}^2 := \sum_{n=0}^{N_t-1}\Delta t\,\|z_h^{n+\frac12}\|_{L^2(\Omega)}^2,
\quad
\|\curl z_h^{1/2}\|_{L^2(0,T;L^2(\Omega))}^2 := \sum_{n=0}^{N_t-1}\Delta t\,\|\curl z_h^{n+\frac12}\|_{L^2(\Omega)}^2.
\]
We write $\|z_h^{1/2}\|_{Z}^2 := \alpha_1 \|z_h^{1/2}\|_{L^2(0,T;L^2(\Omega))}^2 + \alpha_2 \|\curl z_h^{1/2}\|_{L^2(0,T;L^2(\Omega))}^2$.

\paragraph{Discrete state with control.}
Given $z_h^{1/2}\in\mathcal Z_{h,\Delta t}$, we solve, for $n=0,\dots,N_t-1$, the CN step
\begin{subequations}\label{eq:CN-ctrl}
\begin{align}
(\varepsilon\,\delta_t E_h^{n+1},\psi_h)
+(\sigma\, E_h^{n+\frac12},\psi_h)
-(\mu^{-1} B_h^{n+\frac12},\curl\,\psi_h)
&=(I_{\rm src}^{\,n+\frac12}+\curl z_h^{n+\frac12},\psi_h),
\quad \forall\,\psi_h\in \mathcal N_h^0,\\
(\delta_t B_h^{n+1},\phi_h)
+(\curl\, E_h^{n+\frac12},\phi_h)&=0,
\qquad\qquad\qquad\qquad\qquad\ \ \ \forall\,\phi_h\in  \mathcal{RT}_h,
\end{align}
\end{subequations}
with initial data \eqref{eq:initialdiscdata}. Existence/uniqueness and the energy identity of Lemma~\ref{lem:CN-wp} hold verbatim with $f^{\,n+\frac12}=I_{\rm src}^{\,n+\frac12}+\curl z_h^{n+\frac12}$.

\paragraph{Discrete cost functional.}
Let the desired fields be given as midpoint lifts $(E_d^{1/2},B_d^{1/2})$. The discrete
reduced cost reads
\begin{equation}\label{eq:Jhd}
\mathcal J_{h,\Delta t}(z_h^{1/2})
:= \frac12\sum_{n=0}^{N_t-1}\Delta t\,
 \Big(\, \|\sqrt{\varepsilon} (E_h^{n+\frac12}-E_d^{n+\frac12})\|_{L^2(\Omega)}^2
      + \|\sqrt{\mu^{-1}}(B_h^{n+\frac12}-B_d^{n+\frac12})\|_{L^2(\Omega)}^2 \,\Big) 
+ \frac12\,\|z_h^{1/2}\|_{Z}^2,
\end{equation}
where $(E_h^{n+\frac12},B_h^{n+\frac12}) \in \mathcal N_h^0 \times   \mathcal{RT}_h$ is the CN–state \eqref{eq:CN-ctrl} driven by $z_h^{1/2}\in\mathcal Z_{h,\Delta t}$.

\begin{theorem}[Existence and uniqueness of a fully discrete optimal control]\label{thm:disc-exist}
Under Assumption~\ref{ass:data} and $\alpha_1,\alpha_2>0$, the problem
\[
\min_{z_h^{1/2}\in\mathcal Z_{h,\Delta t}} \ \mathcal J_{h,\Delta t}(z_h^{1/2})
\]
admits a unique minimizer $\bar z_h^{1/2}\in\mathcal Z_{h,\Delta t}$.
\end{theorem}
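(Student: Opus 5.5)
The argument is finite-dimensional: we show that $\mathcal J_{h,\Delta t}$ is a strictly convex, coercive, continuous quadratic functional on a finite-dimensional space. Here $\mathcal Z_{h,\Delta t}$ is isomorphic to $(\mathcal N_h)^{N_t}$ through $z_h^{1/2}\mapsto (z_h^{n+\frac12})_{n=0}^{N_t-1}$. We equip it with the inner product
\[
\sum_{n=0}^{N_t-1}\Delta t\big[\alpha_1(z^{n+\frac12},w^{n+\frac12})+\alpha_2(\curl z^{n+\frac12},\curl w^{n+\frac12})\big].
\]
Since $\alpha_1>0$, this is a genuine inner product whose norm is exactly $\|\cdot\|_Z$.

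\textbf{Step 1: the discrete control-to-state map is affine and continuous.} By Lemma~\ref{lem:CN-wp}, each CN step \eqref{eq:CN-ctrl} is uniquely solvable and depends linearly and continuously on $(E_h^n,B_h^n,f^{n+\frac12})$. Composing the $N_t$ steps shows that the map $z_h^{1/2}\mapsto (E_h^{n},B_h^{n})_{n=0}^{N_t}$ is affine, of the form $S_{h,\Delta t}z_h^{1/2}=S^0_{h,\Delta t}z_h^{1/2}+(E_h^{\rm src},B_h^{\rm src})$. The part $(E_h^{\rm src},B_h^{\rm src})$ is the response to $I_{\rm src}$ and the initial data \eqref{eq:initialdiscdata}, and $S^0_{h,\Delta t}$ is linear, being the response to $\curl z_h^{n+\frac12}$ with zero initial data and zero source. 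Boundedness is automatic in finite dimensions, but the stability estimate \eqref{eq:CN_stability_eps} together with the bound $\sum_n\Delta t\|\curl z_h^{n+\frac12}\|^2\le\alpha_2^{-1}\|z_h^{1/2}\|_Z^2$ also gives an explicit bound. The midpoint values $E_h^{n+\frac12},B_h^{n+\frac12}$ are linear combinations of nodal values, so they depend affinely on the control as well.

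\textbf{Step 2: structure of $\mathcal J_{h,\Delta t}$.} Write the tracking term in \eqref{eq:Jhd} as $\tfrac12\|A z-b\|_{W}^2$. Here $A$ maps the control to the midpoint sequence $(E_h^{n+\frac12},B_h^{n+\frac12})_n$ through $S^0_{h,\Delta t}$, and $W$ is the space of such sequences, with the weighted norm $\sum_n\Delta t(\|\sqrt\varepsilon\,\cdot\|^2+\|\sqrt{\mu^{-1}}\,\cdot\|^2)$, which is a norm by Assumption~\ref{ass:data}. The tracking term is convex and continuous. The second term $\tfrac12\|z\|_Z^2$ is the squared norm of the Hilbert structure above, hence strictly convex (in fact $1$-strongly convex) and coercive. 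Consequently $\mathcal J_{h,\Delta t}$ is a continuous, strongly convex quadratic functional with $\mathcal J_{h,\Delta t}(z)\ge\tfrac12\|z\|_Z^2$.

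\textbf{Step 3: existence and uniqueness.} Coercivity confines the minimization to the sublevel set $\{\mathcal J_{h,\Delta t}\le\mathcal J_{h,\Delta t}(0)\}$, which is bounded and closed in a finite-dimensional space and therefore compact. Weierstrass's theorem then yields a minimizer. Uniqueness follows from strict convexity: for two minimizers, their midpoint would give a strictly smaller value. Equivalently, the normal equations $(A^*A+I)z=A^*b$ in the $Z$-inner product have an invertible, positive definite operator.

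The only real obstacle is Step 1: checking that composing the CN steps yields an affine map with well-defined inverses at each step. This is already settled by Lemma~\ref{lem:CN-wp}, so I expect no genuine difficulty.
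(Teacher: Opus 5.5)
Your proposal is correct and follows essentially the same route as the paper: the CN state depends affinely on the control by Lemma~\ref{lem:CN-wp}, the $\tfrac12\|z_h^{1/2}\|_Z^2$ term gives strict convexity and coercivity, and finite-dimensional convex analysis then yields existence and uniqueness. You spell out details the paper leaves implicit, namely the $Z$-inner product, compactness of the sublevel set $\{\mathcal J_{h,\Delta t}\le\mathcal J_{h,\Delta t}(0)\}$, and the strict-convexity/normal-equation argument for uniqueness.
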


\begin{proof}
$\mathcal J_{h,\Delta t}$ is a strictly convex quadratic functional of $z_h^{1/2}$ in a finite–dimensional space, and the CN state depends \emph{affinely} on $z_h^{1/2}$ via $I_{\rm src}^{\,n+\frac12}+\curl z_h^{n+\frac12}$. Coercivity follows from the $\|z_h^{1/2}\|_{Z}^2$–term. Standard finite–dimensional convex analysis yields existence and uniqueness.
\end{proof}

\subsection{Fully discrete optimality system}

To characterize $\bar z_h^{1/2}$ we introduce the discrete adjoint running \emph{backward} in time. Define backward difference and midpoints
\[
\delta_t^- p^{n}:=\frac{p^{n}-p^{n+1}}{\Delta t},\qquad
p^{n+\frac12}:=\tfrac12(p^{n}+p^{n+1}).
\]

\begin{theorem}[Fully discrete first–order optimality system]\label{thm:disc-opt}
Let $\bar z_h^{1/2}\in\mathcal Z_{h,\Delta t}$ be the unique minimizer of $\mathcal J_{h,\Delta t}$, and let $(\bar E_h^n,\bar B_h^n)$ be the associated CN state \eqref{eq:CN-ctrl}. Then there exist unique adjoint sequences $(\bar p_{E,h}^n,\bar p_{B,h}^n)\in \mathcal N_h^0\times \mathcal{RT}_h$, $n=0,\dots,N_t$, with terminal data
\[
\bar p_{E,h}^{N_t}=0\in \mathcal N_h^0,\qquad \bar p_{B,h}^{N_t}=0\in \mathcal{RT}_h,
\]
solving, backward for $n=N_t-1,\dots,0$,
\begin{subequations}\label{eq:disc-adj}
\begin{align}
(\varepsilon\,\delta_t^- \bar p_{E,h}^{n},\psi_h)
+(\sigma\,\bar p_{E,h}^{n+\frac12},\psi_h)
+(\curl\,\psi_h,\bar p_{B,h}^{n+\frac12})
&=(\varepsilon(\bar E_h^{n+\frac12}-E_d^{n+\frac12}),\psi_h),
&&\forall\,\psi_h\in \mathcal N_h^0,\\
(\delta_t^- \bar p_{B,h}^{n},\phi_h)
-(\mu^{-1}\phi_h,\curl\,\bar p_{E,h}^{n+\frac12})
&=(\mu^{-1}(\bar B_h^{n+\frac12}-B_d^{n+\frac12}),\phi_h),
&&\forall\,\phi_h\in \mathcal{RT}_h,
\end{align}
\end{subequations}
and the discrete stationarity condition
\begin{equation}\label{eq:disc-grad}
\alpha_1\sum_{n=0}^{N_t-1}\Delta t\,(\bar z_h^{n+\frac12},w_h^{n+\frac12})
+\alpha_2\sum_{n=0}^{N_t-1}\Delta t\,(\curl \bar z_h^{n+\frac12},\curl w_h^{n+\frac12})
+\sum_{n=0}^{N_t-1}\Delta t\,(\bar p_{E,h}^{n+\frac12},\curl w_h^{n+\frac12})=0
\end{equation}
for all $w_h^{1/2}\in \mathcal Z_{h,\Delta t}$. Equivalently, time–cellwise,
\begin{equation}\label{eq:disc-pointwise-opt}
\alpha_1\,\bar z_h^{n+\frac12} \;+\; \mathcal{C}_h^*\!\big(\alpha_2\,\curl \bar z_h^{n+\frac12}+P_h\,  \bar p_{E,h}^{n+\frac12}\big)=0
\quad\text{in } \mathcal{N}_h,
\qquad n=0,\dots,N_t-1,
\end{equation}
where $\mathcal{C}_h^*:\mathcal{RT}_h\to \mathcal{N}_h$ is the (discrete) adjoint of curl defined by
\((\mathcal{C}_h^* q_h, w_h)=(q_h,\curl w_h)\) for all $q_h\in \mathcal{RT}_h$, $w_h\in \mathcal N_h^0$. 
\end{theorem}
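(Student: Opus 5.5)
The proof is a discrete Lagrangian/adjoint computation: derive the adjoint by a discrete summation-by-parts duality identity, mirroring the continuous argument of Proposition~\ref{prop:S0star}.

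\emph{Step 1: well-posedness of the adjoint.} The backward scheme \eqref{eq:disc-adj} is, at each step $n$, a square linear system in $(\bar p_{E,h}^n,\bar p_{B,h}^n)$ given $(\bar p_{E,h}^{n+1},\bar p_{B,h}^{n+1})$. Injectivity follows exactly as in the existence part of Lemma~\ref{lem:CN-wp}. Take the difference of two solutions with equal $(n+1)$-data. Test the first equation with $\psi_h=\hat p_E^{n+\frac12}$ and the second with $\phi_h=P_h(\mu^{-1}\hat p_B^{n+\frac12})$ (the second equation is rewritten through $\mu^{-1}$ to match). The curl couplings cancel by the orthogonality of $P_h$ and $\curl\mathcal N_h^0\subset\RT_h$, leaving a sum of nonnegative energy terms equal to zero. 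Hence existence and uniqueness hold step by step, starting from the terminal data $0$.

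\emph{Step 2: derivative of the reduced cost.} Since the CN state depends affinely on $z_h^{1/2}$, $\mathcal J_{h,\Delta t}$ is a finite-dimensional quadratic. Its directional derivative at $\bar z_h^{1/2}$ in direction $w_h^{1/2}$ is
\[
\sum_n\Delta t\big[(\varepsilon(\bar E_h^{n+\frac12}-E_d^{n+\frac12}),\delta E_h^{n+\frac12})+(\mu^{-1}(\bar B_h^{n+\frac12}-B_d^{n+\frac12}),\delta B_h^{n+\frac12})\big]+\alpha_1\sum_n\Delta t(\bar z_h^{n+\frac12},w_h^{n+\frac12})+\alpha_2\sum_n\Delta t(\curl\bar z_h^{n+\frac12},\curl w_h^{n+\frac12}).
\]
Here $(\delta E_h^n,\delta B_h^n)$ solves \eqref{eq:CN-ctrl} with $I_{\rm src}=0$, zero initial data and forcing $\curl w_h^{n+\frac12}$. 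By Theorem~\ref{thm:disc-exist} the minimizer is interior (the problem is unconstrained), so this derivative vanishes for every $w_h^{1/2}$.

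\emph{Step 3: discrete duality identity, the main step.}
\begin{itemize}
\item Test the adjoint equations \eqref{eq:disc-adj} with $\psi_h=\delta E_h^{n+\frac12}$ and $\phi_h=\delta B_h^{n+\frac12}$, multiply by $\Delta t$, and sum over $n$.
\item Test the increment equations with $\psi_h=\bar p_{E,h}^{n+\frac12}$ and $\phi_h=\bar p_{B,h}^{n+\frac12}$, multiply by $\Delta t$, and sum over $n$.
\item The $\sigma$ terms match by symmetry of $\sigma$.
\item The curl couplings match term by term: $(\curl\delta E_h^{n+\frac12},\bar p_{B,h}^{n+\frac12})$ appears in both sums, and so does $(\mu^{-1}\delta B_h^{n+\frac12},\curl\bar p_{E,h}^{n+\frac12})$, using symmetry of $\mu^{-1}$.
\item The time-difference terms require the CN summation-by-parts identity
\[
\sum_{n=0}^{N_t-1}\Delta t\,(M\delta_t u^{n+1},p^{n+\frac12})+\sum_{n=0}^{N_t-1}\Delta t\,(M u^{n+\frac12},\delta_t^-p^{n})=(Mu^{N_t},p^{N_t})-(Mu^0,p^0).
\]
This is verified by expanding both sums: the cross terms $(Mu^{n+1},p^n)$ and $(Mu^n,p^{n+1})$ cancel pairwise, and what remains telescopes. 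The boundary terms vanish because $u^0=0$ and $p^{N_t}=0$.
\end{itemize}
This is where I expect the main care to be needed. The midpoint pairing is exactly what makes the CN scheme adjoint-consistent, so sign conventions for $\delta_t^-$ must be tracked carefully; apply the identity with $M=\varepsilon$ and with $M=I$. Comparing the two summed identities yields
\[
\sum_n\Delta t\big[(\varepsilon(\bar E_h^{n+\frac12}-E_d^{n+\frac12}),\delta E_h^{n+\frac12})+(\mu^{-1}(\bar B_h^{n+\frac12}-B_d^{n+\frac12}),\delta B_h^{n+\frac12})\big]=\sum_n\Delta t\,(\bar p_{E,h}^{n+\frac12},\curl w_h^{n+\frac12}).
\]
Inserting this into Step 2 gives \eqref{eq:disc-grad}.

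\emph{Step 4: cellwise form.} Choose $w_h^{1/2}$ supported on a single time cell $(t^n,t^{n+1}]$ to localize \eqref{eq:disc-grad}. Since $\curl w_h\in\RT_h$, we have $(\bar p_{E,h}^{n+\frac12},\curl w_h)=(P_h\bar p_{E,h}^{n+\frac12},\curl w_h)$ by Proposition~\ref{prop:L2proj-RT}. By the definition of $\mathcal C_h^*$ (Riesz representation in $\mathcal N_h$ with the $L^2$ inner product), this is exactly \eqref{eq:disc-pointwise-opt}.
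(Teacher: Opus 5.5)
Your route (sensitivity of the reduced cost plus a discrete duality identity, modeled on Step~4 of Proposition~\ref{prop:S0star}) is a sound and more explicit alternative to the paper's proof. The paper only sketches a discrete Lagrangian built on \eqref{eq:CN-lift} and Lemma~\ref{lem:disc-IBP}. A Crank--Nicolson summation-by-parts identity is exactly the tool needed; Lemma~\ref{lem:disc-IBP}, with a time-independent $v_h$ and a cutoff vanishing at $0$ and $T$, does not supply it directly. However, two of your steps fail as written.

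\emph{The summation-by-parts identity has the wrong sign.} Expanding one step of your left-hand side gives
\[
\Delta t\,(M\delta_t u^{n+1},p^{n+\frac12})+\Delta t\,(Mu^{n+\frac12},\delta_t^-p^{n})=(Mu^{n+1},p^{n})-(Mu^{n},p^{n+1}).
\]
So the diagonal terms cancel, the cross terms survive, and nothing telescopes. For example, $N_t=1$, $M=1$, $u^0=0$, $u^1=1$, $p^0=1$, $p^1=0$ gives $1$ on the left and $0$ on the right. The correct identity, consistent with $\delta_t^-p^n\approx-\partial_t p$, is
\[
\sum_{n=0}^{N_t-1}\Delta t\,(M\delta_t u^{n+1},p^{n+\frac12})-\sum_{n=0}^{N_t-1}\Delta t\,(Mu^{n+\frac12},\delta_t^-p^{n})=(Mu^{N_t},p^{N_t})-(Mu^0,p^0).
\]
Your verbal description (cross terms cancel, the rest telescopes) is true only for this version. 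Taken at face value, your sign makes the time-difference terms of the two summed identities negatives of each other, so the comparison does not close. With the corrected sign, $u^0=0$ and $p^{N_t}=0$ make them equal. The remaining terms then match as you say, and \eqref{eq:disc-grad} follows.

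\emph{The test function in Step~1 does not produce the cancellation.} In the adjoint's second equation, $\mu^{-1}$ acts on the test function, not on the unknown. Testing with $\phi_h=P_h(\mu^{-1}\hat p_B^{n+\frac12})$ therefore yields $-(\mu^{-1}P_h(\mu^{-1}\hat p_B^{n+\frac12}),\curl\hat p_E^{n+\frac12})$. This does not cancel $(\curl\hat p_E^{n+\frac12},\hat p_B^{n+\frac12})$ from the first equation unless $\mu\equiv1$. The $P_h$ trick works in Lemma~\ref{lem:CN-wp} because there $\mu^{-1}$ multiplies the unknown $B$, and both Faraday terms have first argument in $\RT_h$.

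You can repair Step~1 in either of two ways.
\begin{enumerate}
\item[(a)] Test with $\phi_h=\mathcal M_h\hat p_B^{n+\frac12}$, where $\mathcal M_h:\RT_h\to\RT_h$ is defined by $(\mu^{-1}\mathcal M_h q_h,v_h)=(q_h,v_h)$ for all $v_h\in\RT_h$. Since $\curl\hat p_E^{n+\frac12}\in\RT_h$, the coupling becomes $-(\hat p_B^{n+\frac12},\curl\hat p_E^{n+\frac12})$ and cancels. Moreover $(\hat p_B,\mathcal M_h\hat p_B)=\|\mu^{-1/2}\mathcal M_h\hat p_B\|^2$ is definite.
\item[(b)] More simply, the part of one backward adjoint step acting on $(\bar p_{E,h}^n,\bar p_{B,h}^n)$ is the forward CN step form with trial and test arguments swapped (using symmetry of $\varepsilon$ and $\sigma$). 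Its matrix is the transpose of the forward one, hence invertible by Lemma~\ref{lem:CN-wp}.
\end{enumerate}

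Steps~2 and~4 are fine. In Step~4 you rightly take $\mathcal C_h^*$ to be tested against all of $\mathcal N_h$, which is what localization over $\mathcal Z_h=\mathcal N_h$ requires.
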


\begin{proof}
Set up the discrete Lagrangian using the lifted-in-time form \eqref{eq:CN-lift} with forcing
$I_{\rm src}^{1/2}+\curl z_h^{1/2}$ and employ the discrete IBP Lemma~\ref{lem:disc-IBP}. Stationarity
with respect to $E_h^{\mathrm{pl}},B_h^{\mathrm{pl}}$ gives the backward CN adjoint \eqref{eq:disc-adj}, and
stationarity with respect to $z_h^{1/2}$ yields \eqref{eq:disc-grad}–\eqref{eq:disc-pointwise-opt}.
Uniqueness of the adjoint follows as for the forward CN step. 
\end{proof}

\subsection{Convergence of fully discrete optimal controls}

Let $S:Z\to Y$ denote the continuous control-to-state map from Lemma~\ref{lem:S-bounded},
and let $S_{h,\Delta t}:\mathcal Z_{h,\Delta t}\to Y$ denote the map sending $z_h^{1/2}$ to the lifted CN state
$(E_h^{1/2},B_h^{1/2})$ (viewed as elements of $L^2(0,T;L^2(\Omega)^3)$ via the midpoint lift).

\begin{lemma}[Consistency of the discrete control-to-state map]\label{lem:consistency-S}
Suppose $z\in Z$ and let $z_h^{1/2}\in\mathcal Z_{h,\Delta t}$ satisfy
\[
z_h^{1/2}\rightharpoonup z \quad\text{in }L^2(0,T;L^2(\Omega)),\qquad
\curl z_h^{1/2}\rightharpoonup \curl z \quad\text{in }L^2(0,T;L^2(\Omega)),
\]
and $I_{\rm src}^{1/2}\to I_{\rm src}$ in $L^2(0,T;L^2(\Omega))$. Then,
up to a subsequence, the associated CN states satisfy
\[
E_h^{1/2}\to E,\quad B_h^{1/2}\to B \quad\text{in the duality with }L^1(0,T;L^2(\Omega)),
\]
and the lifted piecewise–linear states $E_h^{\mathrm{pl}}\stackrel{*}{\rightharpoonup} E$,
$B_h^{\mathrm{pl}}\stackrel{*}{\rightharpoonup} B$ in $L^\infty(0,T;L^2(\Omega))$, where $(E,B)=S(z)$ is the unique weak solution of \eqref{eq:state-ocp}.
\end{lemma}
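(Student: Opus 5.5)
The proof reuses the compactness-and-identification argument of Lemma~\ref{lem:pl-vs-midpoint} and Theorem~\ref{thm:FD-convergence}, now with a $(h,\Delta t)$-dependent forcing $f_{h}^{1/2}:=I_{\rm src}^{1/2}+\curl z_h^{1/2}$.

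\emph{Uniform bounds.} Weakly convergent sequences are bounded, so $\|f_h^{1/2}\|_{L^2(0,T;L^2(\Omega))}\le C$. Note that $\curl z_h^{n+\frac12}$ is already constant on each cell, so it equals its own cell average. Applying the stability estimate \eqref{eq:CN_stability_eps} of Lemma~\ref{lem:CN-wp} with $f^{n+\frac12}=I_{\rm src}^{n+\frac12}+\curl z_h^{n+\frac12}$, together with $\mathcal E^0\le C$ (from the $L^2$ convergence of the initial data), we obtain the bounds of Lemma~\ref{lem:unifboundEBpl} uniformly in $h,\Delta t$. Lemma~\ref{lem:pl-vs-midpoint} then applies verbatim, since it only uses those bounds. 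It yields a subsequence with $E_h^{\mathrm{pl}}\stackrel{*}{\rightharpoonup}E$ and $B_h^{\mathrm{pl}}\stackrel{*}{\rightharpoonup}B$ in $L^\infty(0,T;L^2(\Omega)^3)$, and the midpoint lifts converge to the same limits in duality with $L^1(0,T;L^2(\Omega)^3)$.

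\emph{Identifying the limit.} We repeat Steps~2--4 of the proof of Theorem~\ref{thm:FD-convergence}. Faraday's law does not involve the forcing, so that argument carries over unchanged. For Amp\`ere's law, fix $\Psi\in H_0(\curl;\Omega)$, $\psi_h=R_h\Psi$ and $\eta\in C_c^\infty(0,T)$. After discrete integration by parts \eqref{eq:disc-ibp0}, every term except the right-hand side is treated as before. The only new term is
\[
\int_0^T (I_{\rm src}^{1/2}+\curl z_h^{1/2},R_h\Psi)\,\eta\,dt .
\]
We split it as $(f_h^{1/2},\Psi)\eta+(f_h^{1/2},R_h\Psi-\Psi)\eta$.
\begin{itemize}
\item The first part converges to $\int_0^T (I_{\rm src}+\curl z,\Psi)\eta\,dt$. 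This uses the strong convergence of $I_{\rm src}^{1/2}$ and the weak convergence of $\curl z_h^{1/2}$ in $L^2(0,T;L^2(\Omega))$, tested against $\eta\Psi\in L^2(0,T;L^2(\Omega)^3)$.
\item The second part is bounded by $\|f_h^{1/2}\|_{L^2L^2}\|\eta\|_{L^2}\|R_h\Psi-\Psi\|_{L^2}\to0$.
\end{itemize}
This weak-times-strong pairing is the one genuinely new point, and the splitting above is what makes it work: weak convergence of the forcing alone suffices because the test function converges strongly. The initial conditions are recovered exactly as in \cite[Theorem~3]{HAntil_2025a}, using $E_h(0)\to E_0$ and $B_h(0)\to B_0$.

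\emph{Conclusion.} The limit $(E,B)$ is therefore a weak solution of \eqref{eq:state-ocp} with $f=I_{\rm src}+\curl z$. By Lemma~\ref{lem:S-bounded} (equivalently Theorem~\ref{thm:main}) this weak solution is unique, so $(E,B)=S(z)$. Because the limit does not depend on the chosen subsequence, every subsequence has a further subsequence with the same limit, which gives the stated convergence. I expect the main obstacle to be the time-derivative identification and the recovery of the initial data; I would handle both exactly as in the omitted steps of Theorem~\ref{thm:FD-convergence}, taking $\eta\in C^\infty([0,T])$ with $\eta(T)=0$ so that the boundary term $(E_h(0),\psi_h)\eta(0)$ picks up the initial data.
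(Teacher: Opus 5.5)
Your proposal is correct and follows the same route as the paper. You bound the forcing $f_h^{1/2}=I_{\rm src}^{1/2}+\curl z_h^{1/2}$ uniformly through the CN stability estimate, obtain compactness from Lemma~\ref{lem:pl-vs-midpoint}, pass to the limit as in Theorem~\ref{thm:FD-convergence} (the only new ingredient being the weak--strong pairing of $f_h^{1/2}$ with $R_h\Psi\,\eta$), and identify the limit with $S(z)$ by uniqueness; you simply make explicit what the paper's three-line proof leaves implicit, and your sub-subsequence remark even gives convergence of the whole sequence, slightly more than the lemma claims.
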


\begin{proof}
Apply Theorem~\ref{thm:FD-convergence} with the (varying) right–hand side
$f_h^{1/2}:=I_{\rm src}^{1/2}+\curl z_h^{1/2}$, which is bounded in $L^2(0,T;L^2(\Omega))$ and converges weakly to $f:=I_{\rm src}+\curl z$.
The argument in Lemma~\ref{lem:pl-vs-midpoint} is linear and passes to weakly convergent sources.
Uniqueness of the weak limit identifies the limit with $S(z)$.
\end{proof}

\begin{theorem}
\label{thm:disc-ctrl-conv-operator}
Let Assumption~\ref{ass:data} hold. Let $\bar z\in Z$ be the unique minimizer of the
continuous reduced problem \eqref{eq:reduced} with associated state $(\bar E,\bar B)=S(\bar z)$
and adjoint $(\bar p_E,\bar p_B)$ (Definition~\ref{def:adjoint_veryweak}). Let
$\bar z_{h,\Delta t}^{1/2}\in \mathcal Z_{h,\Delta t}:=L^2(0,T;\mathcal N_h^0)$ be the unique minimizer of
the fully discrete reduced functional $\mathcal J_{h,\Delta t}$ (built with the CN state solver,
midpoint lifts, and discrete tracking). Then, as $h,\Delta t\to 0$ (up to a not–relabeled subsequence),
\[
\bar z_{h,\Delta t}^{1/2}\rightharpoonup \bar z \quad\text{in } L^2(0,T;L^2(\Omega)^3),
\qquad
\curl \bar z_{h,\Delta t}^{1/2}\rightharpoonup \curl \bar z \quad\text{in } L^2(0,T;L^2(\Omega)^3).
\]
Moreover, if $\alpha_1>0$ and $\alpha_2>0$, then the convergence is strong in $Z$:
\[
\bar z_{h,\Delta t}^{1/2}\to \bar z \ \text{ in }L^2(0,T;L^2(\Omega)),
\qquad
\curl \bar z_{h,\Delta t}^{1/2}\to \curl \bar z \ \text{ in }L^2(0,T;L^2(\Omega)).
\]
\end{theorem}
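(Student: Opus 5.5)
The plan is the standard $\Gamma$-convergence-type argument for discretized optimal control problems, which yields weak convergence first and then upgrades it to strong convergence via convergence of the norms.

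\emph{Step 1: uniform bound.} I compare $\mathcal J_{h,\Delta t}(\bar z_{h,\Delta t}^{1/2})\le \mathcal J_{h,\Delta t}(0)$. The right-hand side is bounded uniformly in $h,\Delta t$ by Lemma~\ref{lem:unifboundEBpl}, with $f=I_{\rm src}$ and Lemma~\ref{lem:fbound}, together with the $\ell^2$-boundedness of the midpoint averages $E_d^{1/2},B_d^{1/2}$ (Jensen, as in Lemma~\ref{lem:fbound}). Hence $\|\bar z_{h,\Delta t}^{1/2}\|_Z\le C$. Extracting a subsequence gives $\bar z_{h,\Delta t}^{1/2}\rightharpoonup z^*$ and $\curl\bar z_{h,\Delta t}^{1/2}\rightharpoonup \chi$ in $L^2(0,T;L^2(\Omega)^3)$. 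Testing against $\Phi\in C_c^\infty(\Omega\times(0,T))^3$ and integrating $\curl$ by parts identifies $\chi=\curl z^*$, so $z^*\in Z$. Lemma~\ref{lem:consistency-S} then gives weak convergence of the midpoint states to $S(z^*)$.

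\emph{Step 2: liminf inequality.} The tracking term is a continuous convex functional of $(E_h^{1/2},B_h^{1/2})$, and $E_d^{1/2}\to E_d$ strongly in $L^2$. It is therefore weakly lower semicontinuous, and with the norm term we get $\mathcal J(z^*)\le\liminf \mathcal J_{h,\Delta t}(\bar z_{h,\Delta t}^{1/2})$.

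\emph{Step 3: recovery sequence.} This is the main obstacle. I need $\tilde z_{h,\Delta t}\in\mathcal Z_{h,\Delta t}$ with $\tilde z\to\bar z$ strongly in $Z$ and $\mathcal J_{h,\Delta t}(\tilde z)\to\mathcal J(\bar z)$. I would take the time cell average of the $H(\curl)$-Ritz/Riesz projection $R_h$ of Proposition~\ref{prop:Riesz-Hcurl} (on $\mathcal N_h$ without boundary condition), i.e.\ $\tilde z^{n+1/2}=R_h\big(\tfrac1{\Delta t}\int_{I_n}\bar z\big)$. Strong convergence in $Z$ follows from the convergence of cell averages (Lemma~\ref{lem:time-average-convergence}), from $R_h\to I$ strongly in $H(\curl)$ by density, and from uniform boundedness of $R_h$. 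Convergence of the tracking term needs \emph{strong} convergence of the discrete states, which is the delicate point, since Theorem~\ref{thm:FD-convergence} only gives weak convergence. I would first try the energy method: apply the discrete energy identity \eqref{eq:CN_energy_identity} to the difference between the CN state and a projection of the exact state. If that proves cumbersome, the alternative is to expand the quadratic tracking term and exploit the affine structure. Writing $S_{h,\Delta t}(\tilde z)=S_{h,\Delta t}^0\tilde z+(\text{source part})$, the cost becomes $\tfrac12\|S_{h}^0 \tilde z\|^2$ plus cross terms. Then $\|S^0_h\tilde z\|^2=(\tilde z, (S_h^0)^*S_h^0\tilde z)$, and weak convergence of $S_h^{0,*}S^0_h\tilde z$ (the discrete adjoint driven by weakly convergent data, treated like Lemma~\ref{lem:consistency-S} for the backward scheme \eqref{eq:disc-adj}) paired with strongly converging $\tilde z$ gives convergence. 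The analogous identity handles the source parts. Either way, $\limsup\mathcal J_{h,\Delta t}(\tilde z)\le\mathcal J(\bar z)$.

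\emph{Step 4: conclusion.} Chaining the estimates,
\[
\mathcal J(z^*)\le\liminf\mathcal J_{h,\Delta t}(\bar z^{1/2}_{h,\Delta t})\le\limsup\mathcal J_{h,\Delta t}(\bar z_{h,\Delta t}^{1/2})\le\limsup \mathcal J_{h,\Delta t}(\tilde z)=\mathcal J(\bar z),
\]
so $z^*=\bar z$ by uniqueness (Theorem~\ref{thm:exist-unique-ocp}). Moreover all inequalities are equalities. Because the tracking part and each of the two norm parts are separately weakly lsc, equality of the sum forces $\lim\|\bar z^{1/2}_{h,\Delta t}\|_{L^2}=\|\bar z\|_{L^2}$ and $\lim\|\curl\bar z^{1/2}_{h,\Delta t}\|_{L^2}=\|\curl\bar z\|_{L^2}$ (here $\alpha_1,\alpha_2>0$ is used). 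Weak convergence plus convergence of norms in a Hilbert space yields strong convergence in $Z$. Uniqueness of the limit also upgrades the result to the whole sequence.
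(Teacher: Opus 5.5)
Your route differs from the paper's. The paper never compares optimal values. It works with the Euler equations \eqref{eq:opt-cont-operator} and \eqref{eq:opt-disc-operator}: it gets the uniform bound by testing the discrete one with $\bar z^{1/2}_{h,\Delta t}$, and passes to the limit in it against time-averaged N\'ed\'elec interpolants $w_{h,\Delta t}$ of smooth test controls. It then identifies the weak limit by uniqueness for \eqref{eq:opt-cont-operator}, and gets norm convergence by testing the discrete equation with $\bar z^{1/2}_{h,\Delta t}$ and the continuous one with $w=\bar z$.

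Your $\Gamma$-convergence scheme (comparison with $\mathcal J_{h,\Delta t}(0)$, liminf, recovery sequence, squeeze of the three separately lower semicontinuous parts) is a different argument. It needs neither the discrete optimality system of Theorem~\ref{thm:disc-opt} nor the continuous first-order condition, and it gives convergence of the optimal values for free. You also correctly build the recovery sequence in $\mathcal N_h$ rather than $\mathcal N_h^0$. This matters: with $\mathcal N_h^0$, as the statement literally writes, every weak limit would lie in $L^2(0,T;H_0(\curl;\Omega))$, whereas $\bar z$ generally has nonzero tangential trace.

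Both routes, however, rest on the same ingredient: strong convergence in $Y$ of the discrete states when controls, sources and initial data converge strongly. The paper needs it for $S_{0,h,\Delta t}w_{h,\Delta t}$ and $y^{\rm src}_{h,\Delta t}$; you need it for $S_{h,\Delta t}(\tilde z)$.

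Your proposal does not yet establish that ingredient.

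\emph{Option (a).} It cannot be carried out directly for weak solutions. The residual from inserting a projection of $(E,B)$ into the CN scheme involves time-truncation errors and $\curl E$ in $L^2$, which $L^2$ data do not provide. You would first have to regularize the data and then use uniform stability.

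\emph{Option (b).} It is sound in principle: summation by parts shows \eqref{eq:disc-adj} is the exact transpose of CN with midpoint tracking. But it needs a convergence theory for the discrete adjoint with merely weakly convergent data, which the paper does not supply. Moreover, for the source part the duality identity picks up the term $(\varepsilon p_{E,h}^0,E_h^0)+(p_{B,h}^0,B_h^0)$. So you would also need $p_h^0\rightharpoonup p(0)$ in $L^2$ and a version of Proposition~\ref{prop:S0star} with nonzero initial data.

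\emph{A cheaper route.} Apply \eqref{eq:CN_energy_identity} to the discrete state itself: sum it up to each node $t^m$, multiply by $\Delta t$, and sum over $m$. Use $\|\sqrt\varepsilon E_h^{n+\frac12}\|^2\le\frac12(\|\sqrt\varepsilon E_h^n\|^2+\|\sqrt\varepsilon E_h^{n+1}\|^2)$, and likewise for $B_h$. With $f^{1/2}_{h,\Delta t}=I^{1/2}_{\rm src}+\curl\tilde z$, this gives
\[
\tfrac12\|S_{h,\Delta t}(\tilde z)\|_Y^2\le \big(T+\tfrac{\Delta t}{2}\big)\mathcal E^0+\int_0^T w_{\Delta t}\big[(f^{1/2}_{h,\Delta t},E_h^{1/2})-\|\sqrt\sigma E_h^{1/2}\|^2\big]\,dt,\qquad w_{\Delta t}=T-t^n \text{ on } I_n .
\]
Three facts then finish the job: strong--weak convergence of the forcing term, weak lower semicontinuity of the dissipation, and the time-integrated continuous energy identity $\tfrac12\|y\|_Y^2=T\mathcal E(0)+\int_0^T(T-t)[(f,E)-\|\sqrt\sigma E\|^2]\,dt$. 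Together they yield $\limsup\|S_{h,\Delta t}(\tilde z)\|_Y\le\|S(\bar z)\|_Y$, hence strong convergence, with no adjoint at all. The time integration is essential: \eqref{eq:disc-bal} as printed puts the terminal energy where the space-time norm $\|\cdot\|_Y$ should be, so it cannot be used verbatim.
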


\begin{proof}
\textbf{Step 1 (continuous and discrete optimality equations).}
Let $S_0:Z\to Y$ denote the linear control–to–state map and $S(z)=S_0 z + y^{\rm src}$
with $y^{\rm src}:=(E^{\rm src},B^{\rm src})$. Set $\mathcal C z:=\curl z$ and define the adjoint
$\mathcal C^*$ by $\langle \mathcal C^* v,w\rangle_{Z^*,Z}=\int_0^T (v,\curl w)\,dt$.
For $\mathcal J(z)=\tfrac12\|S(z)-y_d\|_Y^2+\tfrac{\alpha_1}{2}\|z\|_{L^2(\Omega)}^2
+\tfrac{\alpha_2}{2}\|\curl z\|_{L^2(\Omega)}^2$, its derivative is
\[
\mathcal J'(z)[w]=\big(S_0 z + y^{\rm src}-y_d,\, S_0 w\big)_Y
+ \alpha_1 (z,w) + \alpha_2 (\curl z,\curl w).
\]
Hence the continuous optimality condition at $\bar z$ reads
\begin{equation}\label{eq:opt-cont-operator}
\big(S_0 \bar z + y^{\rm src}-y_d,\, S_0 w\big)_Y
+ \alpha_1 (\bar z,w) + \alpha_2 (\curl \bar z,\curl w)=0
\qquad\forall\, w\in Z.
\end{equation}
Equivalently, $H\bar z=r$ in $Z^*$ with
\[
H:=S_0^*\mathcal{W}S_0+\alpha_1 I+\alpha_2\mathcal C^*\mathcal C \quad\text{(bounded, symmetric, coercive)}, 
\qquad r:=S_0^*\mathcal{W}(y_d-y^{\rm src}).
\]
where $\mathcal{W}:= (\varepsilon, \mu^{-1})$ represents the weight tensors' tuple.

Let $S_{0,h,\Delta t}:\mathcal Z_{h,\Delta t}\to Y$ be the discrete linear control–to–state map
(CN scheme, midpoint lift) and $y^{\rm src}_{h,\Delta t}$ the discrete source state.
Then the discrete Euler equation is
\begin{equation}\label{eq:opt-disc-operator}
\big(S_{0,h,\Delta t}\bar z_{h,\Delta t}^{1/2}+y^{\rm src}_{h,\Delta t}-\Pi_{\Delta t}y_d,\,
S_{0,h,\Delta t} w_{h,\Delta t}\big)_Y
+ \alpha_1(\bar z_{h,\Delta t}^{1/2},w_{h,\Delta t})
+ \alpha_2(\curl \bar z_{h,\Delta t}^{1/2},\curl w_{h,\Delta t})=0
\end{equation}
for all $w_{h,\Delta t}\in \mathcal Z_{h,\Delta t}$. This is $H_{h,\Delta t}\bar z_{h,\Delta t}^{1/2}=r_{h,\Delta t}$
in $\mathcal Z_{h,\Delta t}^*$ with
\[
H_{h,\Delta t}:=S_{0,h,\Delta t}^* \mathcal{W} S_{0,h,\Delta t}+\alpha_1 I+\alpha_2\mathcal C^*\mathcal C, 
\qquad
r_{h,\Delta t}:=S_{0,h,\Delta t}^* \mathcal{W} (\Pi_{\Delta t}y_d-y^{\rm src}_{h,\Delta t}),
\]
uniformly bounded and coercive.

\medskip
\textbf{Step 2 (uniform control bounds and weak compactness).}
Testing \eqref{eq:opt-disc-operator} with $w_{h,\Delta t}=\bar z_{h,\Delta t}^{1/2}$ gives
\[
\|S_{0,h,\Delta t}\bar z_{h,\Delta t}^{1/2}\|_Y^2
+ \alpha_1\|\bar z_{h,\Delta t}^{1/2}\|_{L^2(\Omega)}^2
+ \alpha_2\|\curl \bar z_{h,\Delta t}^{1/2}\|_{L^2(\Omega)}^2
= \big(\Pi_{\Delta t}y_d-y^{\rm src}_{h,\Delta t},\,S_{0,h,\Delta t}\bar z_{h,\Delta t}^{1/2}\big)_Y.
\]
By Cauchy--Schwarz/Young and stability of the discrete solver (Lemma~\ref{lem:unifboundEBpl}),
\[
\frac12\|S_{0,h,\Delta t}\bar z_{h,\Delta t}^{1/2}\|_Y^2
+ \alpha_1\|\bar z_{h,\Delta t}^{1/2}\|_{L^2(\Omega)}^2
+ \alpha_2\|\curl \bar z_{h,\Delta t}^{1/2}\|_{L^2(\Omega)}^2
\ \le\ \frac12\|\Pi_{\Delta t}y_d-y^{\rm src}_{h,\Delta t}\|_Y^2
\ \le\ C,
\]
and so there exists $\tilde z\in Z$ and a (not relabeled) subsequence such that
\begin{equation}\label{eq:weak-lims}
\bar z_{h,\Delta t}^{1/2}\rightharpoonup \tilde z \text{ in }L^2(0,T;L^2(\Omega)), 
\qquad
\curl \bar z_{h,\Delta t}^{1/2}\rightharpoonup \curl \tilde z \text{ in }L^2(0,T;L^2(\Omega)).
\end{equation}

\medskip
\textbf{Step 3 (limit in the optimality equation).}
Fix $w\in Z$ with $w\in C_c^\infty(0,T;H^1(\Omega)^3 \cap H(\curl;\Omega))$ (dense in $Z$). 
Let $I_h^N$ be the standard first–order N\'ed\'elec interpolant on $H^1(\Omega)^3\cap H(\curl;\Omega)$, and define
\[
w_{h,\Delta t}(t):=\frac{1}{\Delta t}\int_{I_n} I_h^N w(s)\,ds,\qquad t\in I_n.
\]
Then
\begin{equation}\label{eq:wh-strong}
w_{h,\Delta t}\to w \text{ in }L^2(0,T;L^2(\Omega)),
\qquad
\curl w_{h,\Delta t}\to \curl w \text{ in }L^2(0,T;L^2(\Omega)).
\end{equation}
By the consistency of the discrete forward map (Lemma~\ref{lem:pl-vs-midpoint}, Theorem~\ref{thm:FD-convergence}),
\[
S_{0,h,\Delta t}\bar z_{h,\Delta t}^{1/2}\rightharpoonup S_0\tilde z \text{ in }Y,
\qquad
S_{0,h,\Delta t} w_{h,\Delta t}\to S_0 w \text{ in }Y.
\]
Next, we show the above stated strong convergence. 
Let the discrete forcing be $f_{h,\Delta t}^{1/2}:=\Pi_{\Delta t}I_{\rm src}+\curl w_{h,\Delta t}$ and the
continuous one $f:=I_{\rm src}+\curl w$. Then
$f_{h,\Delta t}^{1/2}\to f$ strongly in $L^2(0,T;L^2(\Omega)^3)$.
Denote the corresponding discrete state (midpoint lift) by
$y_{h,\Delta t}:=S_{0,h,\Delta t} w_{h,\Delta t}=(E_{h,\Delta t}^{1/2},B_{h,\Delta t}^{1/2})$
and the continuous state by $y:=S_0 w=(E,B)$.

By Lemma~\ref{lem:CN-wp} (summed in time) we have the discrete energy balance
\begin{equation}\label{eq:disc-bal}
\|y_{h,\Delta t}\|_Y^2 \;+\; \int_0^T \|\sqrt{\sigma}\,E_{h,\Delta t}^{1/2}\|_{L^2(\Omega)}^2\,dt
\;=\; \|y_{h,\Delta t}(0)\|_Y^2 \;+\; \int_0^T (f_{h,\Delta t}^{1/2},\,E_{h,\Delta t}^{1/2})\,dt,
\end{equation}
while the continuous solution $y=(E,B)=S_0 w$ satisfies
\begin{equation}\label{eq:cont-bal}
\|y\|_Y^2 \;+\; \int_0^T \|\sqrt{\sigma}\,E\|_{L^2(\Omega)}^2\,dt
\;=\; \|y(0)\|_Y^2 \;+\; \int_0^T (f,\,E)\,dt .
\end{equation}
Since $y_{h,\Delta t}(0)\to y(0)$ in $Y$, $f_{h,\Delta t}^{1/2}\to f$ in $L^2(0,T;L^2(\Omega)^3)$, and
$E_{h,\Delta t}^{1/2}\rightharpoonup E$ weakly in $L^2(0,T;L^2(\Omega)^3)$. Then
\[
\int_0^T (f_{h,\Delta t}^{1/2},\,E_{h,\Delta t}^{1/2})\,dt \;\longrightarrow\; \int_0^T (f,\,E)\,dt
\quad\text{(strong--weak convergence),}
\]
and by weak lower semicontinuity,
\[
\liminf_{h,\Delta t}\int_0^T \|\sqrt{\sigma}\,E_{h,\Delta t}^{1/2}\|_{L^2(\Omega)}^2\,dt
\;\ge\; \int_0^T \|\sqrt{\sigma}\,E\|_{L^2(\Omega)}^2\,dt .
\]
Taking $\limsup$ in \eqref{eq:disc-bal} and using the previous two limits yields
\[
\limsup_{h,\Delta t}\|y_{h,\Delta t}\|_Y^2
\;\le\; \|y(0)\|_Y^2 + \int_0^T (f,\,E)\,dt \;-\; \liminf_{h,\Delta t}\!\int_0^T \|\sqrt{\sigma}\,E_{h,\Delta t}^{1/2}\|_{L^2(\Omega)}^2\,dt
\;\le\; \|y\|_Y^2,
\]
where the last inequality uses \eqref{eq:cont-bal}. This proves
\[
\limsup_{h,\Delta t}\|y_{h,\Delta t}\|_Y^2 \;\le\; \|y\|_Y^2.
\]
Together with weak lower semicontinuity,
$\|y\|_Y^2 \le \liminf_{h,\Delta t}\|y_{h,\Delta t}\|_Y^2$, we conclude
$\|y_{h,\Delta t}\|_Y \to \|y\|_Y$, and hence
\[
y_{h,\Delta t}\ \to\ y \quad\text{strongly in }Y.
\]
Therefore \(S_{0,h,\Delta t} w_{h,\Delta t}\to S_0 w\) in \(Y\).

Moreover, $y^{\rm src}_{h,\Delta t}\to y^{\rm src}$ and $\Pi_{\Delta t}y_d\to y_d$ in $Y$.
Passing to the limit in \eqref{eq:opt-disc-operator} with the test $w_{h,\Delta t}$ yields
\[
\big(S_0\tilde z+y^{\rm src}-y_d,\,S_0 w\big)_Y
+\alpha_1(\tilde z,w)+\alpha_2(\curl\tilde z,\curl w)=0
\qquad\forall\, w\in C_c^\infty(0,T;H^1(\Omega)^3\cap H(\curl;\Omega)).
\]
By density, the identity holds for all $w\in Z$, and by uniqueness of the solution
to \eqref{eq:opt-cont-operator}, we conclude $\tilde z=\bar z$.

\medskip
\textbf{Step 4 (strong convergence).}
Set
\[
u_{h,\Delta t}:=S_{0,h,\Delta t}\,\bar z_{h,\Delta t}^{1/2}\in Y,
\qquad
a_{h,\Delta t}:=\Pi_{\Delta t}y_d - y^{\rm src}_{h,\Delta t}\in Y .
\]
Testing the discrete optimality equation \eqref{eq:opt-disc-operator} with
$w_{h,\Delta t}=\bar z_{h,\Delta t}^{1/2}$ yields
\begin{equation}\label{eq:key-id-disc}
\|u_{h,\Delta t}\|_Y^2
+ \alpha_1\|\bar z_{h,\Delta t}^{1/2}\|_{L^2(\Omega)}^2
+ \alpha_2\|\curl \bar z_{h,\Delta t}^{1/2}\|_{L^2(\Omega)}^2
\;=\; (a_{h,\Delta t},\,u_{h,\Delta t})_Y .
\end{equation}
Equivalently,
\begin{equation}\label{eq:key-id}
\alpha_1\|\bar z_{h,\Delta t}^{1/2}\|_{L^2(\Omega)}^2
+ \alpha_2\|\curl \bar z_{h,\Delta t}^{1/2}\|_{L^2(\Omega)}^2
\;=\; (a_{h,\Delta t},\,u_{h,\Delta t})_Y - \|u_{h,\Delta t}\|_Y^2 .
\end{equation}

By the compactness/consistency already established,
\[
\bar z_{h,\Delta t}^{1/2}\rightharpoonup \bar z \text{ in }L^2,\qquad
u_{h,\Delta t}=S_{0,h,\Delta t}\bar z_{h,\Delta t}^{1/2}\rightharpoonup S_0\bar z \text{ in }Y,
\qquad
a_{h,\Delta t}\to y_d-y^{\rm src}\ \text{ in }Y.
\]
Hence
\[
(a_{h,\Delta t},u_{h,\Delta t})_Y \longrightarrow (y_d-y^{\rm src},\,S_0\bar z)_Y,
\qquad
\liminf_{h,\Delta t}\|u_{h,\Delta t}\|_Y^2 \;\ge\; \|S_0\bar z\|_Y^2 .
\]
Taking $\limsup$ in \eqref{eq:key-id} and using the elementary inequality
$\limsup(x_n-y_n)\le \limsup x_n - \liminf y_n$ gives
\begin{equation}\label{eq:limsup-pre}
\limsup_{h,\Delta t}\Big[\alpha_1\|\bar z_{h,\Delta t}^{1/2}\|_{L^2(\Omega)}^2
+ \alpha_2\|\curl \bar z_{h,\Delta t}^{1/2}\|_{L^2(\Omega)}^2\Big]
\;\le\; (y_d-y^{\rm src},\,S_0\bar z)_Y - \|S_0\bar z\|_Y^2 .
\end{equation}
Evaluate the continuous optimality equation \eqref{eq:opt-cont-operator} at $w=\bar z$:
\[
(S_0\bar z+y^{\rm src}-y_d,\,S_0\bar z)_Y
+\alpha_1\|\bar z\|_{L^2(\Omega)}^2+\alpha_2\|\curl\bar z\|_{L^2(\Omega)}^2=0,
\]
i.e.,
\[
(y_d-y^{\rm src},\,S_0\bar z)_Y
=\|S_0\bar z\|_Y^2+\alpha_1\|\bar z\|_{L^2(\Omega)}^2+\alpha_2\|\curl\bar z\|_{L^2(\Omega)}^2.
\]
Insert this into \eqref{eq:limsup-pre} to obtain
\[
\limsup_{h,\Delta t\to0}\ \Big[\,\alpha_1\|\bar z_{h,\Delta t}^{1/2}\|_{L^2(\Omega)}^2
+\alpha_2\|\curl \bar z_{h,\Delta t}^{1/2}\|_{L^2(\Omega)}^2\Big]
\ \le\ \alpha_1\|\bar z\|_{L^2(\Omega)}^2+\alpha_2\|\curl \bar z\|_{L^2(\Omega)}^2,
\]
which is the desired estimate.
Together with \eqref{eq:weak-lims} and lower semicontinuity, this implies equality of the norms,
hence strong convergence in the corresponding components. If $\alpha_1>0$ and $\alpha_2 >0$ then we immediately obtain the strong convergence.
\end{proof}

\section{Numerical examples}
\label{sec:numerics}

\label{section:numerics}
We present three numerical examples to demonstrate our structure-preserving optimal control scheme.
Our approach is to solve equation \eqref{eq:CN} forward in time for the states $(E_h^n, B_h^n)$, then solve equation \eqref{eq:disc-adj} backward in time for the adjoint states $(\bar p_{E,h}^n,\bar p_{B,h}^n)$, for all $n$, and compute the discrete gradient in \eqref{eq:disc-grad}. 
We use a quasi-Newton method based on the limited-memory BFGS secant approximation to update the control and iterate until the gradient norm is less than a set value or a maximum number of iterations is exceeded.
We perform our numerical experiments using two high-performance computing frameworks: \emph{MrHyDE: A framework for solving Multi-resolution Hybridized Differential Equations} \cite{mrhyde_user}, to solve Maxwell's equations and compute adjoint sensitivities and gradients, and the \emph{Rapid Optimization Library (ROL)} \cite{ROL2022ICCOPT}, to solve the optimization problem.

\subsection{Example 1}
		We consider as our first example a 1D problem modeled with a pseudo-3D domain with periodic boundary condition surfaces parallel to the $xz$ and $yz$ planes, respectively.
        The domain ($z$-dimension) is given by: 
    $$ \Omega = (-40\times 10^{-6} \,\text{m}\;,\; 40\times 10^{-6} \,\text{m})$$ 
    and subdivided into source, control, and observation regions, as shown in Figure \ref{fig:1D-domain}. This is similar to the experiment from Figure \ref{fig:split_vs_nosplit}. We partition $\Omega$ with 396 equally spaced intervals.
	\begin{figure}[h!]
	    \centering
        \begin{tikzpicture}
            \coordinate (A) at (-6,0);
            \coordinate (B) at (6,0);
            \coordinate (C1) at (-1.8,0);
            \coordinate (C2) at (-0.6,0);
            \coordinate (D1) at (0.6,0);
            \coordinate (D2) at (1.8,0);
            
            \draw[very thick] (A) -- (C1) 
                node[midway, above, sloped, font=\small] {observation};
            \draw[blue, very thick] (C1) -- (C2) 
                node[midway, below, sloped, font=\small] {control};
            \draw[blue, very thick] (D1) -- (D2) 
                node[midway, below, sloped, font=\small] {control};
            \draw[red, very thick] (C2) -- (D1) 
                node[midway, above, sloped, font=\small] {source};
            \draw[very thick] (D2) -- (B) 
                node[midway, above, sloped, font=\small] {observation};
                
            \fill (A) circle (2pt) node[below] {$-40\cdot 10^{-6}$};
            \fill (B) circle (2pt) node[below] {$40\cdot 10^{-6}$};
            \fill[blue] (C1) circle (2pt);
            \fill[red] (C2) circle (2pt);
            \fill[red] (D1) circle (2pt);
            \fill[blue] (D2) circle (2pt);
        \end{tikzpicture}
        \caption{1D setup, where the source numerically models an infinite current sheet in the $xy$ plane, in the middle of the `source' region.  }
	    \label{fig:1D-domain}
	\end{figure}

    We take the time interval $(0,T) = (0, 200\cdot 10^{-15} \,\text{s})$ and use 400 time steps ($\Delta t = 0.5\cdot 10^{-15} \,\text{s}$).
    We assume the electromagnetic waves travel in a vacuum.  Throughout, the electric permittivity and magnetic permeability constants are given by $$\varepsilon =8.854187817\cdot 10^{-12} \; \mathrm{farads}/\text{m}  \qquad \text{ and } \qquad \mu = 1.2566370614\cdot 10^{-6} \; \mathrm{henries}/\text{m}, $$ respectively.  	
    We set the electric conductivity $\sigma$ to zero throughout the domain except close to the boundary where we gradually ramp it up so it acts as an adiabatic absorber to prevent reflection of waves back into the domain.
    Given $\Omega_{\rm abs}$, an adiabatic absorption region near the boundary, the formula for the ramp-up function is
	\begin{align}\label{conductivity_def}
	\sigma(z,t) = \left\lbrace\begin{array}{ccl}
	0 & ,& (z,t) \in\overline{\Omega \setminus \Omega_{\rm abs}} \times [0,T] \\
	\sigma_{\max}\left(\frac{|z| - (40\cdot 10^{-6} \,\text{m} - L_{\rm abs})}{L_{\rm abs}}\right)^3 &, & (z,t)\in  \overline{\Omega_{\rm abs}} \times [0,T]
	\end{array}\right.
	\end{align} 
	where $$\sigma_{\max} = 10^{4} \; \mathrm{siemens}/\text{m} \qquad , \qquad L_{\rm abs} = 8 \cdot 10^{-6}\,\text{m}.$$
    For the initial conditions we choose $E_0 = 0$ and $B_0 = 0$.
    The target fields are set to $E_d = 0$ and~$B_d = 0$, i.e., we wish to cloak the electromagnetic source.
    For the source current density, we use 
	$$ I_{\rm src}(z,t) =  (\tilde{I}_{\rm src}(z,t), 0,  0 ) $$ where the source $\tilde{I}_{\rm src}(z,t)$ is supported a small strip in the middle subdomain, 
	\begin{align*}
	\Omega_J = [-0.404\cdot 10^{-6} \,\text{m} \;, \;0.404\cdot 10^{-6} \,\text{m}],
	\end{align*}
    containing four mesh elements, and follows a sinusoidal waveform with Gaussian ramp-up given by (in amperes/$\text{m}^2$)
	\begin{align}\label{source_current_def}
	\tilde{I}_{\rm src}(z,t) = \left\lbrace\begin{array}{lcl}
	0 \enspace  & , & (z,t) \in  \Omega \setminus \Omega_J \times [0,T],  \\
	g(t) \cdot e^{-2[\pi \cdot \sigma_J \cdot (t-t_{\text{offset}})]^2}  & , & (z,t) \in \Omega_J \times [0, t_{\text{offset}}],   \\
	g(t)  & , & (z,t) \in  \Omega_J \times [t_{\text{offset}}, T],
	\end{array}\right.
	\end{align}
	where 
	\begin{align*}
	g(t) &= \cos(2 \pi  \cdot f_{\text{center}} \cdot (t-t_{\text{offset}})), \\
	\sigma_J &= \frac{40}{2\sqrt{2 \log(2)}} \cdot 10^{12}  \,\text{Hz}, \\
	t_{\text{offset}} &= 50 \cdot 10^{-15} \,\text{s}, \\
	f_{\text{center}} &= 75 \cdot 10^{12} \,\text{Hz}.
	\end{align*}

    \begin{figure}[htb!]
        \centering
        \includegraphics[width=0.95\linewidth]{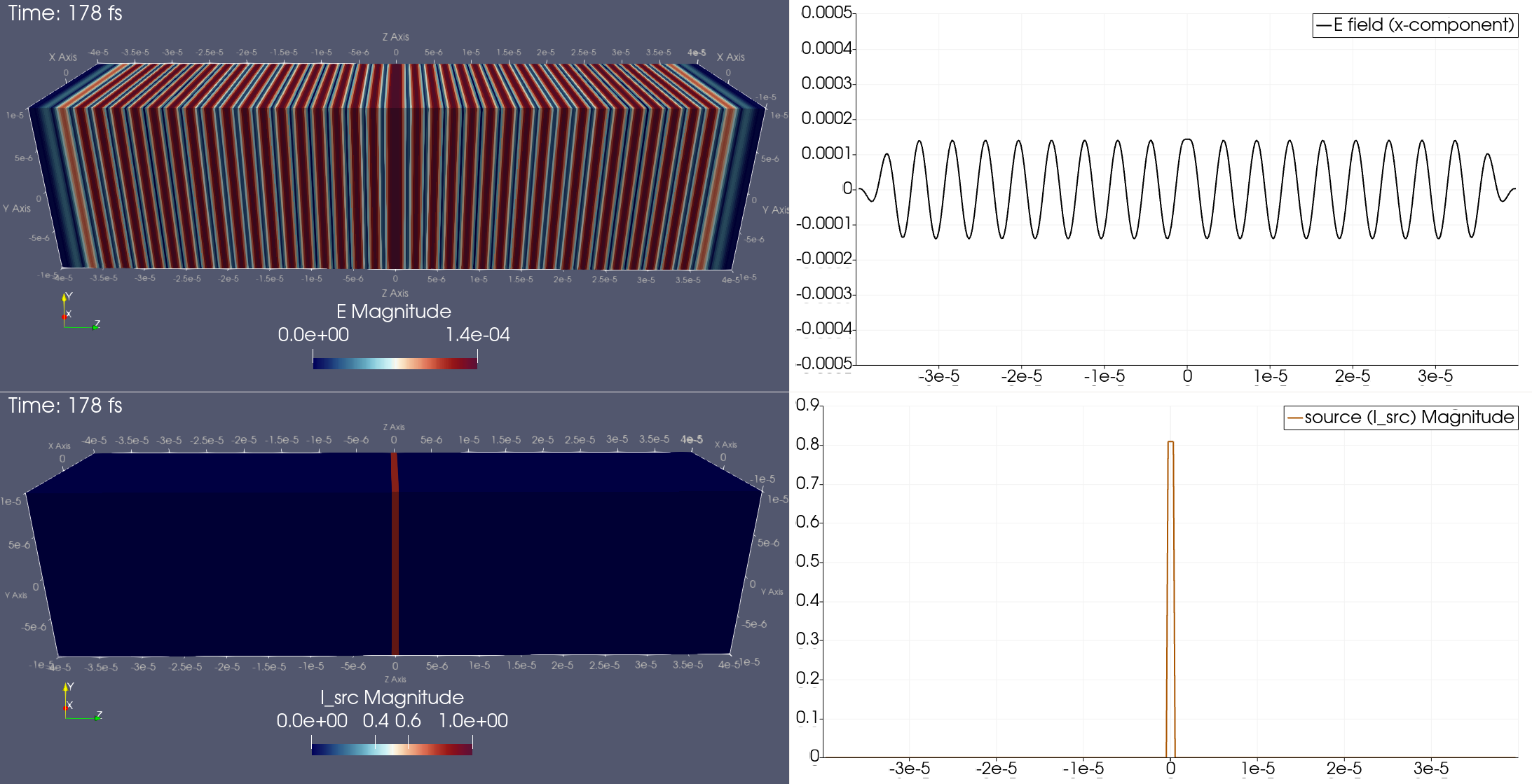}
        \caption{Forward problem: electric field and source current magnitudes at $t=178\,\text{fs}$. The left panel shows the 3D periodic block, with electric field propagation on top and the source current below. The right panel shows the 1D versions obtained by plotting over a line along the $z$-axis. The electric field propagates throughout to entire domain.}
        \label{fig:1D-forward}
        \bigskip
    \end{figure}
    \begin{figure}[htb!]
        \includegraphics[width=0.95\linewidth]{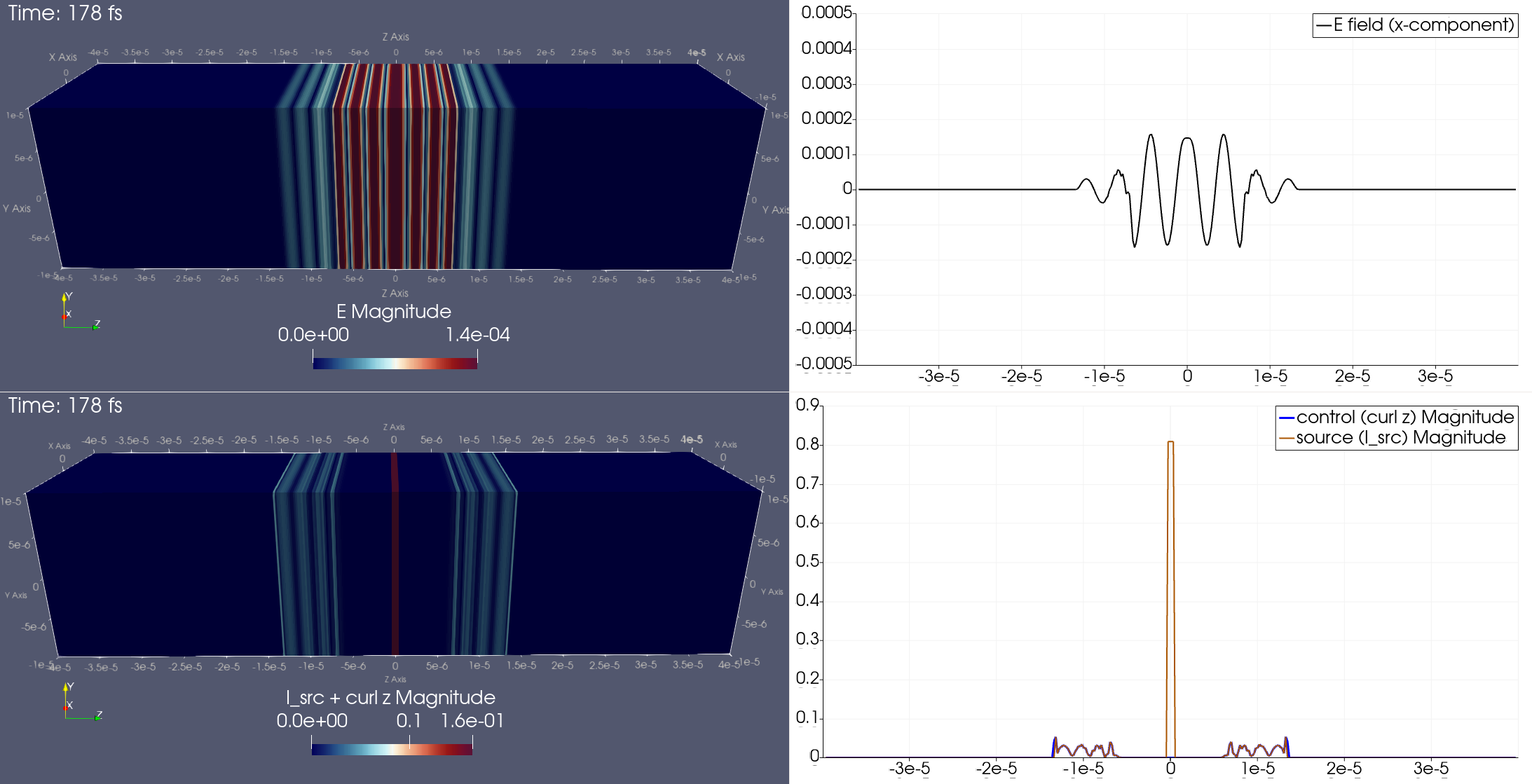}
        \caption{Control problem: electric field, control current and source current magnitudes at $t=178 \,\text{fs}$. The left panel shows the periodic 3D block with electric field propagation on top and $I_\text{src}$ and $\curl z$ magnitudes below. The right panel shows the 1D plots over a line. Here, the bottom image in the right panel shows the source current magnitude in comparison with the control current (curl z) magnitude. The top image shows that the electric field is near zero in observation regions.}
        \label{fig:1D-opt}
    \end{figure}

    In the objective function, we use a tracking weight of $10^{35}$ and control penalties $\alpha_1, \alpha_2 = 10^{5}$.
    The results of the forward problem simulation are shown in Figure~\ref{fig:1D-forward}. Figure~\ref{fig:1D-opt} shows the solution of the optimal control problem. In each figure we show the magnitude of the electric field at a point in time (178 femtoseconds), and the magnitude of the source current (and control current when solving the optimal control problem). We also provide line plots through the z-axis, showing linear interpolants of the cell-centered data provided by MrHyDE.
    The top panel of Figure~\ref{fig:1D-opt} indicates that the electric field generated by the source is blocked very effectively. 


\subsection{Example 2}
    As our second example, we consider a $2$D square domain modeled with a pseudo-3D domain with periodic boundary condition surfaces parallel to the $xy$ plane.
    The domain is given by
    $$ \Omega = (-36\cdot 10^{-6} \,\text{m}\;,\; 36\cdot 10^{-6} \,\text{m})^2 $$ 
    and has four distinct regions, see Figure~\ref{fig:2D-domain}.
    Near the center is a thunderbird source region ($\Omega_J$).
    Next, there are four small disjoint circular control regions ($\Omega_\text{ctrl}$), placed around the source region, where the control current will be generated.
    Surrounding the source and control regions is a circular air region within which we let the source and control electromagnetic fields propagate and interact.
    Finally, surrounding the air region is the observation region ($\Omega_\text{obs}$) where we expect the fields to be close to the target values.
    \begin{figure}[htb!]
	    \centering
             \includegraphics[width=0.8\linewidth]{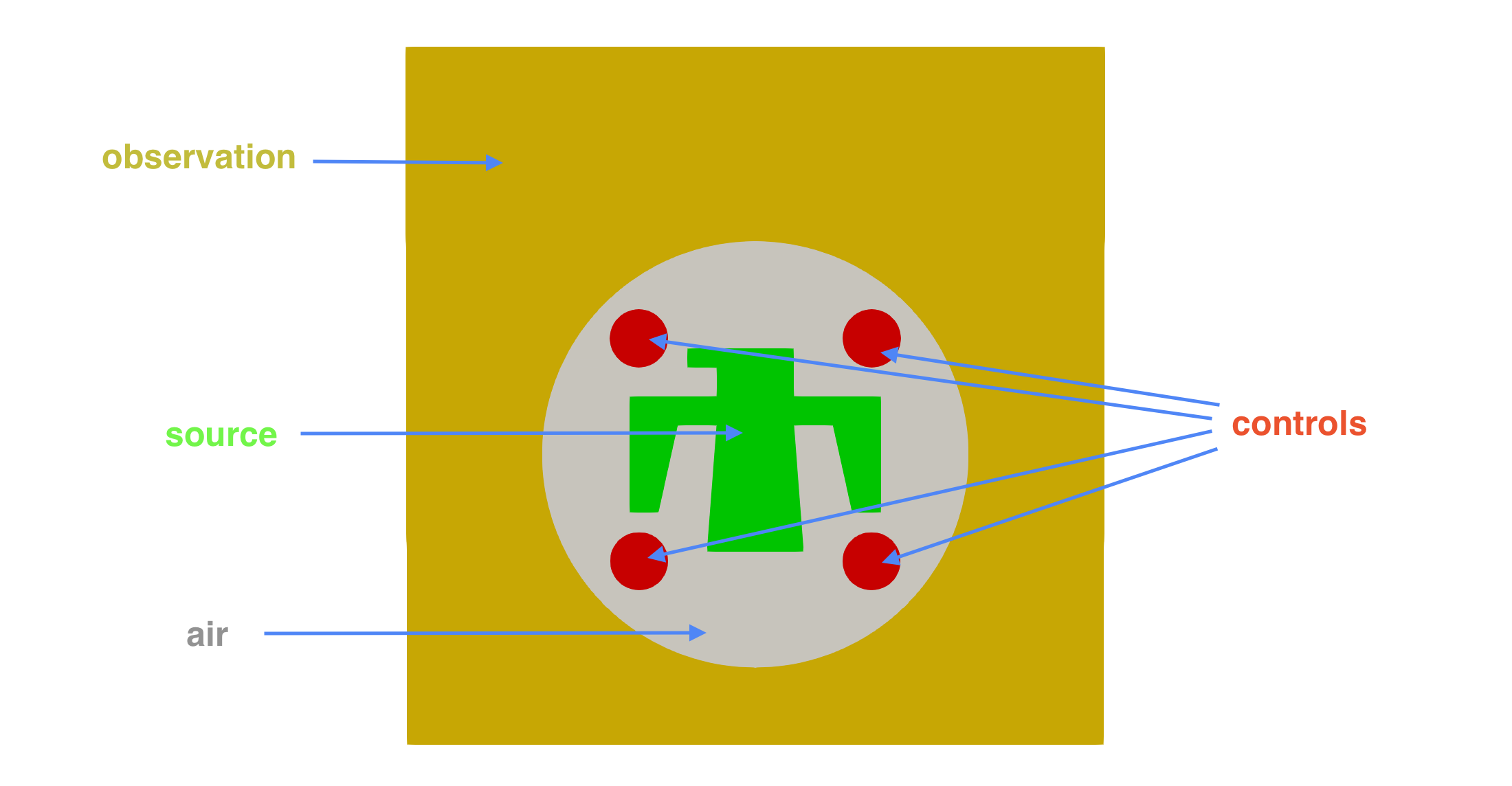}
	    \caption{2D setup, with a `thunderbird' source, four cylindrical controls, and an observation region.}
	    \label{fig:2D-domain}
	\end{figure}

    We discretize the pseudo-3D domain with 47,746 hexahedral elements and solve \eqref{eq:reduced} on a time interval of $(0,T) = (0, 200\cdot 10^{-15} \,\text{s})$ with 400 time steps.
    Electric conductivity is defined as 
    \begin{align}\label{conductivity_def2d}
	\sigma(x,y,t) = \left\lbrace\begin{array}{ccl}
	0 & ,& (x,y,t) \in\overline{\Omega \setminus \Omega_{\rm abs}} \times [0,T] \\
	\sigma_{\max}\left(\frac{s(x)}{L_{\rm abs}}\right)^3 + \sigma_{\max}\left(\frac{s(y)}{L_{\rm abs}}\right)^3  &, & (x,y,t)\in  \overline{\Omega_{\rm abs}} \times [0,T]
	\end{array}\right.
	\end{align} 
    where $s(x_i) = |x_i| - (36\cdot 10^{-6}\,\text{m} - L_{\rm abs}) $ with the adiabatic absorption region length $ L_{\rm abs} = 10 \cdot 10^{-6}\,\text{m} $.  \\
    The initial conditions and the desired fields are again set to zero and the source current is given by 
    $$ I_{\rm src}(x,y,t) =  (0, 0, \tilde{I}_{\rm src}(x,y,t) ) $$ with $\tilde{I}_{\rm src}(x,y,t)$ defined as in \eqref{source_current_def}, using $(x,y)$ instead of $(z)$, and $\Omega_J$ given by the thunderbird source region. The parameters are: 	
    \begin{align*}
	g(t) &= \cos(2 \pi  \cdot f_{\text{center}} \cdot (t-t_{\text{offset}})), \\
	\sigma_J &= \frac{25}{2} \cdot 10^{12}  \,\text{Hz}, \\
	t_{\text{offset}} &= 50 \cdot 10^{-15} \,\text{s}, \\
	f_{\text{center}} &= 42.9 \cdot 10^{12} \,\text{Hz}.
	\end{align*}
    
    We use a tracking weight of $ 10^{35}$ and control penalties $\alpha_1, \alpha_2 = 10^{5}$. Figure~\ref{fig:2D-max} shows the electromagnetic energies in the forward and optimal control simulations as well as the control current. Figure~\ref{fig:2D-lineplot} shows a plot along a line. We observe that the electromagnetic fields are near zero in the observation region.
    \begin{figure}[htb!]
        \centering
        \includegraphics[width=0.95\linewidth]{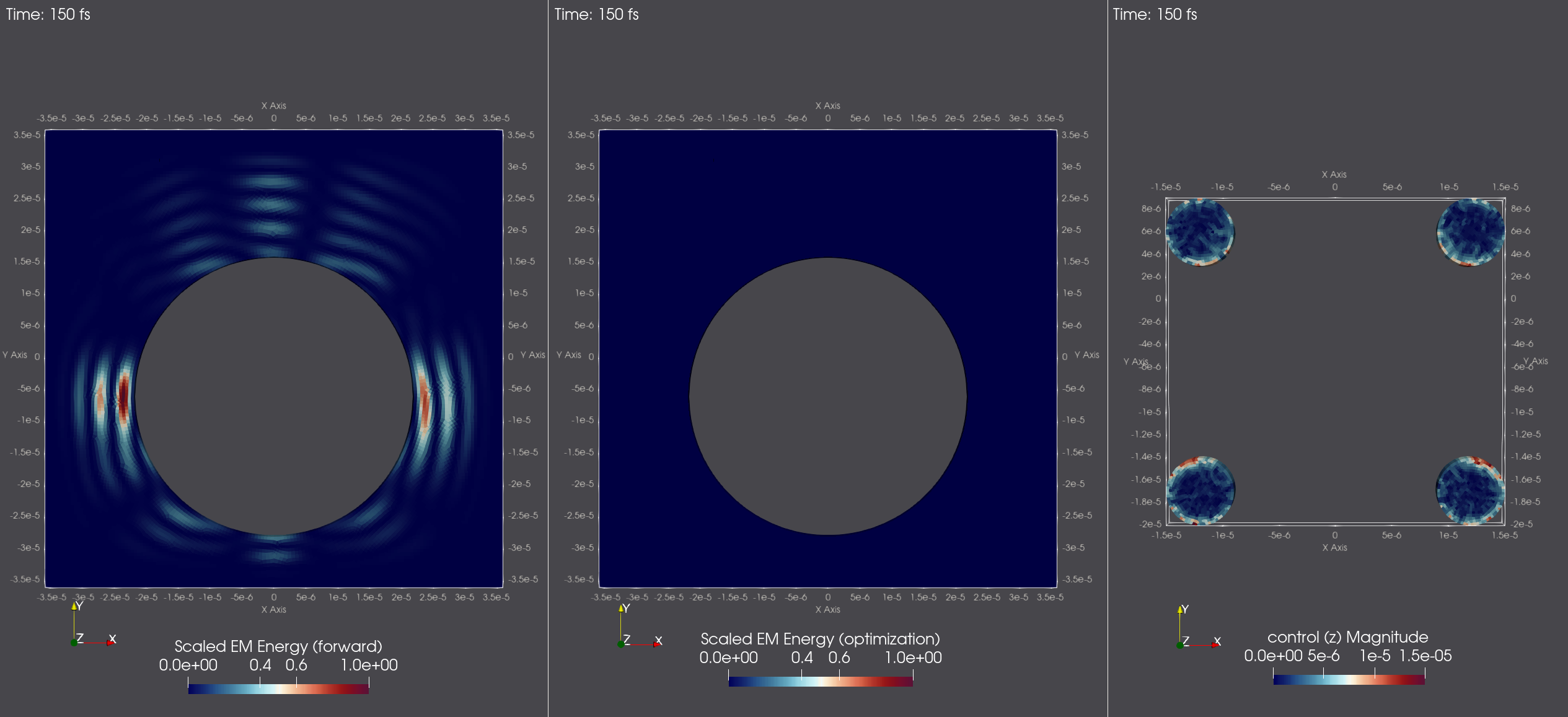}
        \caption{Scaled electromagnetic energies of forward and optimal control simulations in the observation regions at $t=150 \,\text{fs}$. The scaling is done by dividing by the max value. The left image shows the uncontrolled fields, the center image shows the controlled fields, and the right image shows the control current magnitude. The controlled field is near zero.}
        \label{fig:2D-max}
  \end{figure}
  \begin{figure}[htb!]
        \centering
        \includegraphics[width=0.5\linewidth]{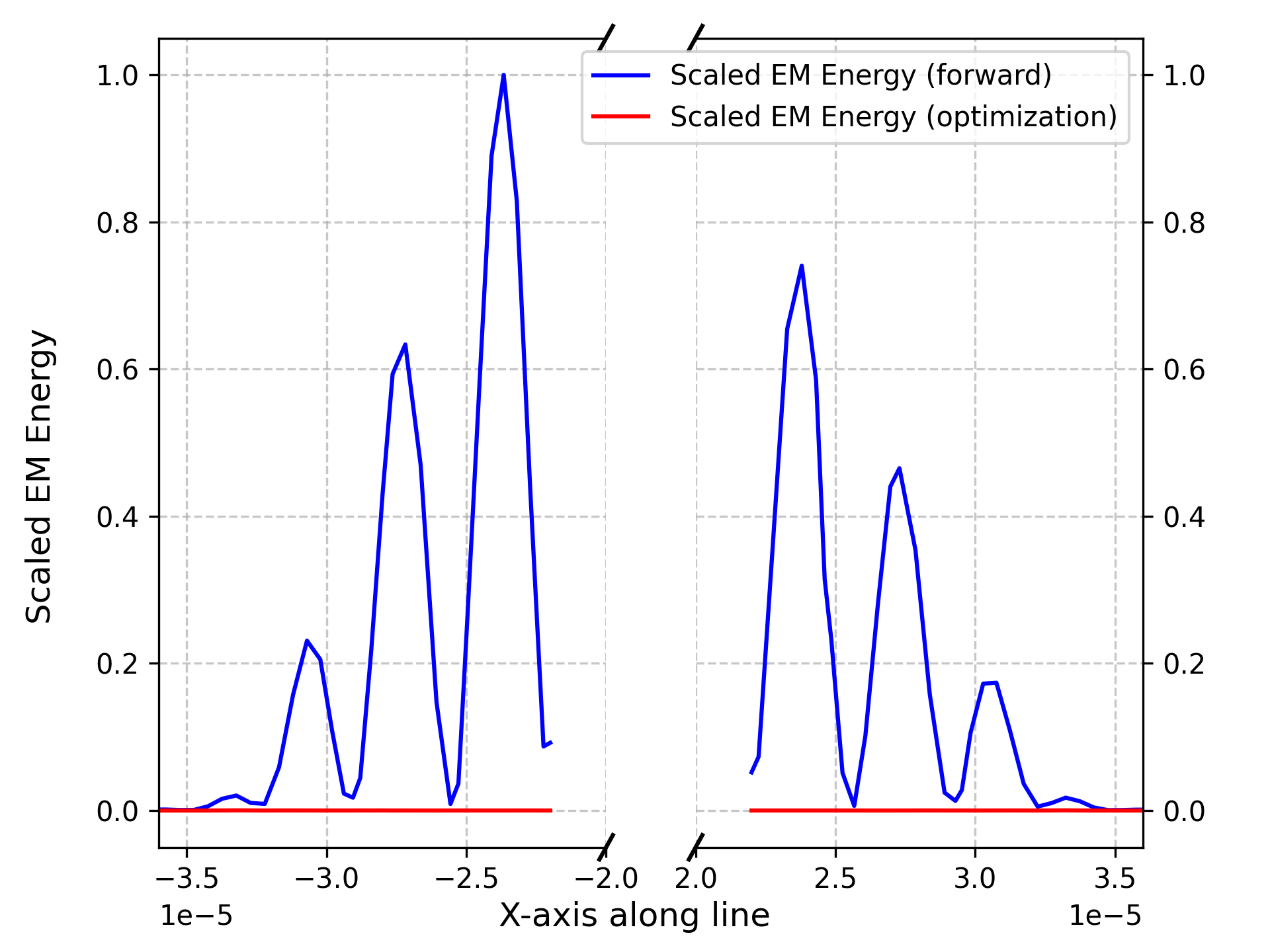}        
        \caption{Line plot of the scaled electromagnetic energies for the forward and optimization simulations at $t=150 \,\text{fs}$ through a line across the $x$-endpoints at $y=-6\cdot 10^{-6} \,\text{m}$. The sharp edges are artifacts of the sampling used for the plot. The middle portion is removed to emphasize the observation region.  The controlled electromagnetic energy (red) is near zero.}
        \label{fig:2D-lineplot}
    \end{figure}

    \subsection{Example 3}
	Our third and final example is a cube domain with dimensions 
    $$ \Omega = [-20\cdot 10^{-6} \,\text{m} \;,\; 20\cdot 10^{-6} \,\text{m}]^3$$ 
    and four distinct regions, see Figure \ref{fig:3D-domain}:
    a `cup' source region, $\Omega_J$, at the center;
    eight small disjoint spherical control regions, $\Omega_{\rm ctrl}$, placed around the source region;
    a spherical air region surrounding the source and control regions within which the source and control fields interact; and
    an observation region, $\Omega_{\rm obs}$, surrounding the air regions, where we expect the controlled fields to match their targets.
	\begin{figure}[htb!]
      \centering
      \includegraphics[width=0.95\linewidth]{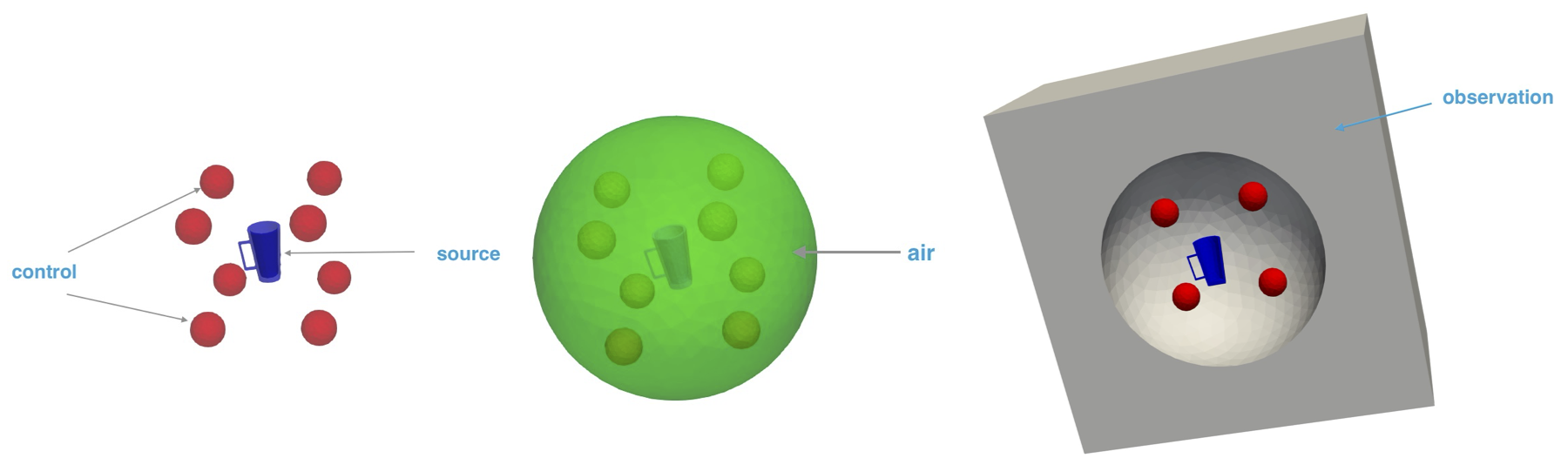}
      \caption{3D setup, with a `cup' source, eight spherical controls, and an observation region.}
      \label{fig:3D-domain}
	\end{figure}    

    We discretize the entire domain with 2,332,860 tetrahedral elements and solve \eqref{eq:reduced} on $(0,T) = (0, 200\cdot 10^{-15} \,\text{s})$ with 1600 time steps.
    The electric conductivity is defined as 
    \begin{align}\label{conductivity_def3d}
	\sigma(x,y,z,t) = \left\lbrace\begin{array}{ccl}
	0 & ,& (x,y,z,t) \in\overline{\Omega \setminus \Omega_{\rm abs}} \times [0,T] \\
	\sigma_{\max}\left[\left(\frac{s(x)}{L_{\rm abs}}\right)^3 + \left(\frac{s(y)}{L_{\rm abs}}\right)^3 + 
    \left(\frac{s(z)}{L_{\rm abs}}\right)^3 \right] &, & (x,y,z,t)\in  \overline{\Omega_{\rm abs}} \times [0,T]
	\end{array}\right.
	\end{align} 
     where $s(x_i) = |x_i| - (20\cdot 10^{-6} \,\text{m} - L_{\rm abs}) $, and the adiabatic absorption region length $ L_{\rm abs} = 10 \cdot 10^{-6} \,\text{m} $.
    The initial conditions and the desired fields are set to zero, and the source current is given by
    $$ I_{\rm src}(x,y,z,t) =  (\tilde{I}_{\rm src}(x,y,z,t), 0, 0 ) $$ with $\tilde{I}_{\rm src}(x,y,z,t$) defined similarly to \eqref{source_current_def}, with $\Omega_J$ given by the cup source region and $g(t), \sigma_J, t_{\rm offset}$ and $f_{\rm center}$ defined as in Example 2.
    
    We use a tracking weight of $ 10^{37}$ and regularization weights $\alpha_1, \alpha_2 = 10^{5}$. To visualize the results we take a diagonal slice through the 3D domain.
    Figure~\ref{fig:3D-max} shows the electromagnetic energies in the forward and optimal control simulations as well as the control current.
    Figure~\ref{fig:3D-lineplot} shows the energy plot along a line through the diagonal slice.
    The source is cloaked very effectively.
    \begin{figure}[htb!]
        \centering
        \includegraphics[width=\linewidth]{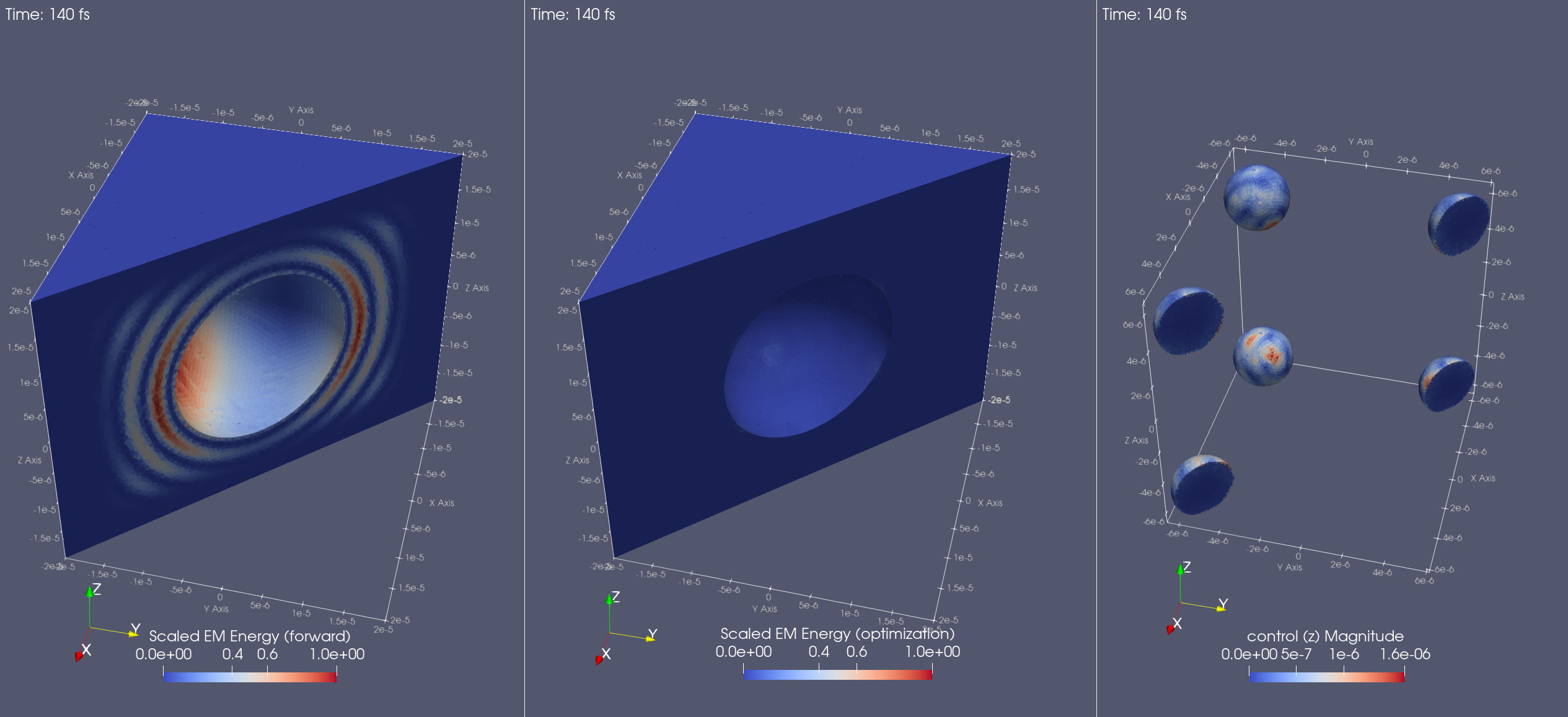}
        \caption{Scaled electromagnetic energies of forward and optimal control simulations in the observation regions at $t=140 \,\text{fs}$. The scaling is done by dividing by the max value. The left image shows the uncontrolled energy, the center image shows the controlled energy, and the right image shows the control current magnitude.  Controlled fields are near zero.}
        \label{fig:3D-max}
    \end{figure}
    \begin{figure}[htb!]
        \centering
        \includegraphics[width=0.5\linewidth]{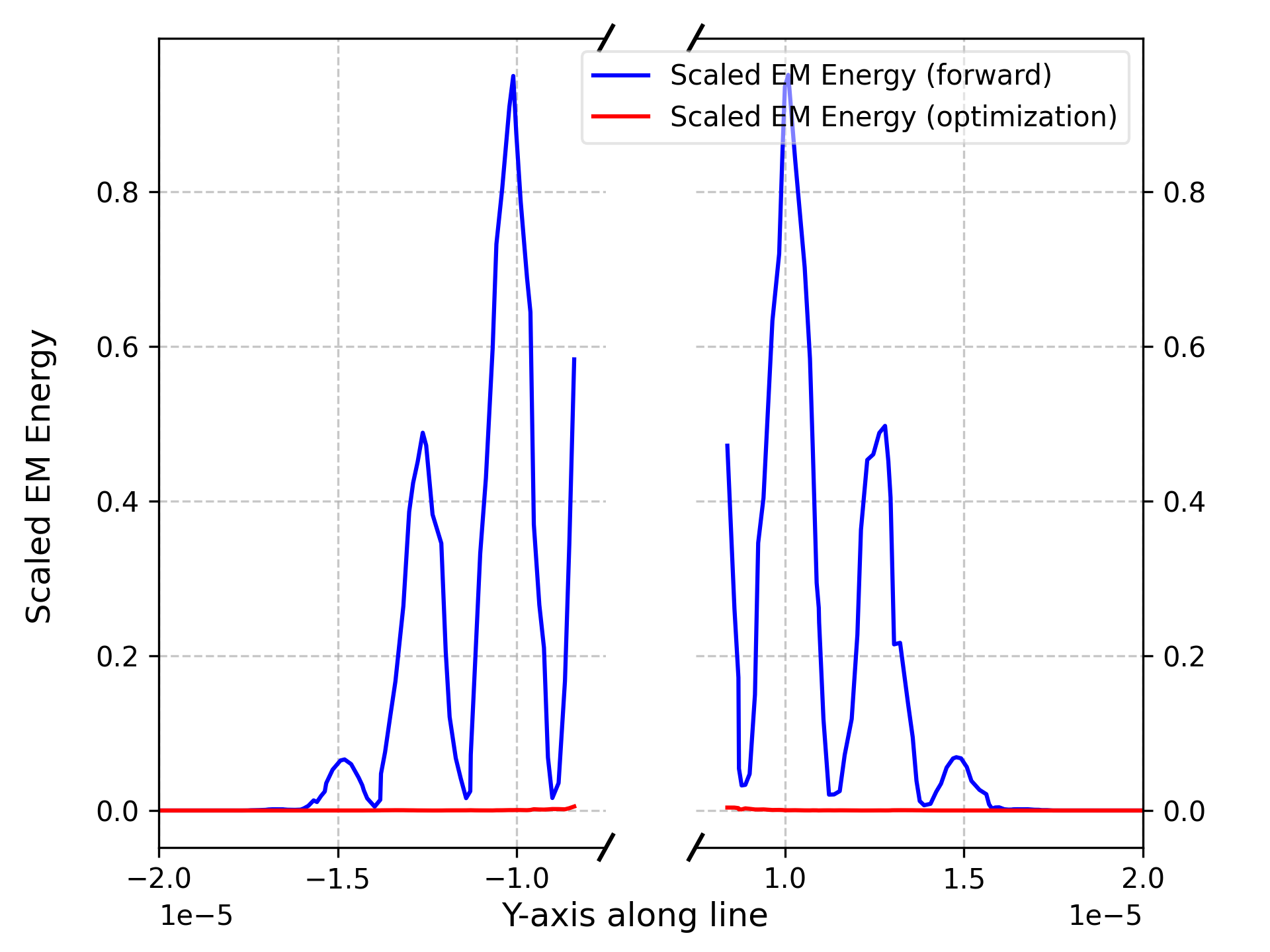}
        \caption{Line plot of the scaled electromagnetic energies for the forward and optimization simulations at $t=140\,\text{fs}$ through a line across the $xy$-plane at $z=-2.5\cdot 10^{-6}\,\text{m}$. The sharp edges are artifacts of the sampling used for the plot. The middle portion is removed to emphasize the observation region.  The controlled electromagnetic energy (red) is near zero.}
        \label{fig:3D-lineplot}
    \end{figure}

\section*{Acknowledgments}
We thank Eric Cyr for a careful reading of the paper and the suggestion to use $\curl z$ instead of $z$ as the control, to conserve charge.

\appendix
\counterwithin{theorem}{section}
\counterwithin{proposition}{section}
\counterwithin{lemma}{section}
\counterwithin{equation}{section} 
\renewcommand\theequation{\thesection.\arabic{equation}} 

\section{Auxiliary Results} \label{appendixA}

\begin{proposition}[Global $L^2$-orthogonal projection onto $\RT_h$ and its convergence]
\label{prop:L2proj-RT} 
Let $\RT_h$ denote the Raviart--Thomas space of order $k$ on $\mathcal T_h$, conforming in $H(\divv;\Omega)$.

\smallskip
\noindent\textbf{(a) Construction (global $L^2$-orthogonal projector).}
For $u\in L^2(\Omega)^3$, define $P_h u\in \RT_h$ by
\begin{equation}\label{eq:L2proj-def}
  (u - P_h u,\, v_h) = 0 \qquad \forall\, v_h\in \RT_h.
\end{equation}
Equivalently, if $\{\rho_i\}_{i=1}^M$ is any basis of $\RT_h$, let
$M_{ij}:=(\rho_j,\rho_i)$ and $b_i:=(u,\rho_i)$ and solve $M c=b$; then
$P_h u:=\sum_{j=1}^M c_j\,\rho_j$.

\smallskip
\noindent Then $P_h:L^2(\Omega)^3\to \RT_h$ is a well-defined linear projector with
\[
P_h^2=P_h,\qquad \mathrm{Range}(P_h)=\RT_h,\qquad \|P_h u\|_{L^2(\Omega)}\le \|u\|_{L^2(\Omega)}
\]
\[
\text{and}\quad
\|u-P_h u\|_{L^2(\Omega)}=\min_{v_h\in\RT_h}\|u-v_h\|_{L^2(\Omega)}.
\]

\smallskip
\noindent\textbf{(b) Convergence for $L^2$-data.}
For every $\Phi\in L^2(\Omega)^3$,
\begin{equation}\label{eq:L2-conv}
  \|\,\Phi - P_h\Phi\,\|_{L^2(\Omega)} \xrightarrow[h\to0]{} 0 .
\end{equation}
\end{proposition}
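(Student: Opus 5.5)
Part (a) is finite-dimensional Hilbert space theory. $\RT_h$ is a finite-dimensional, hence closed, subspace of the Hilbert space $L^2(\Omega)^3$. So the projection theorem gives, for each $u$, a unique $P_hu\in\RT_h$ satisfying the orthogonality relation \eqref{eq:L2proj-def}, and this element is the best approximation. Concretely, I will note that the Gram matrix $M_{ij}=(\rho_j,\rho_i)$ is symmetric positive definite: $c^\top Mc=\|\sum_j c_j\rho_j\|^2_{L^2(\Omega)}>0$ for $c\neq0$, since the $\rho_j$ are linearly independent. Hence $Mc=b$ is uniquely solvable, and testing \eqref{eq:L2proj-def} with $v_h=\rho_i$ shows the two definitions agree. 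The algebraic properties then follow directly. Linearity is inherited from linearity of $b$ in $u$. For $u\in\RT_h$, $v_h=u$ satisfies the defining relation, so $P_hu=u$, which gives $P_h^2=P_h$ and $\mathrm{Range}(P_h)=\RT_h$. Testing with $v_h=P_hu$ gives the Pythagorean identity $\|u\|^2=\|P_hu\|^2+\|u-P_hu\|^2$, and hence $L^2$-stability. For any $v_h\in\RT_h$, the identity $\|u-v_h\|^2=\|u-P_hu\|^2+\|P_hu-v_h\|^2$ gives the minimization property.

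For (b), I will use the best-approximation property together with a density argument. Fix $\Phi\in L^2(\Omega)^3$ and $\eta>0$. Since $C_c^\infty(\Omega)^3$ is dense in $L^2(\Omega)^3$, pick a smooth $\Phi_\eta$ with $\|\Phi-\Phi_\eta\|_{L^2(\Omega)}<\eta$. Smooth fields lie in the domain of the canonical Raviart--Thomas interpolant $\Pi_h^{\RT}$, which satisfies $\|\Phi_\eta-\Pi_h^{\RT}\Phi_\eta\|_{L^2(\Omega)}\le Ch|\Phi_\eta|_{H^1(\Omega)}$ on shape-regular meshes (standard, e.g.\ \cite{boffibrezzifortinMixedFEM}). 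Then
\[
\|\Phi-P_h\Phi\|_{L^2(\Omega)}\le\|\Phi-\Pi_h^{\RT}\Phi_\eta\|_{L^2(\Omega)}\le \eta+Ch|\Phi_\eta|_{H^1(\Omega)},
\]
so $\limsup_{h\to0}\|\Phi-P_h\Phi\|_{L^2(\Omega)}\le\eta$, and since $\eta$ is arbitrary the limit is $0$.

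The only step needing care is (b). The approximation estimate must be applied to the regularized field $\Phi_\eta$, not to $\Phi$ itself, because $\Pi_h^{\RT}$ is not defined on $L^2$. The uniform $L^2$-stability of $P_h$ from part (a) is exactly what makes the density argument close. Nothing beyond shape regularity is needed, and the Lipschitz boundary plays no role since the smooth approximants have compact support.
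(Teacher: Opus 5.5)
Your proof is correct, and it is essentially the argument the paper relies on: the paper gives no proof of its own here and simply defers to \cite[Prop.~3]{HAntil_2025a}, and your route is the standard one. For (a) that is the projection theorem with a symmetric positive definite Gram matrix. For (b) it is best approximation combined with density of $C_c^\infty(\Omega)^3$ and the first-order estimate for the canonical interpolant $\Pi_h^{\mathcal{RT}}$ on smooth fields.

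Two small precision points. First, the displayed chain in (b) uses the minimization property $\|\Phi-P_h\Phi\|_{L^2(\Omega)}\le\|\Phi-\Pi_h^{\mathcal{RT}}\Phi_\eta\|_{L^2(\Omega)}$, not the $L^2$-stability of $P_h$. Your closing remark therefore misattributes which part of (a) closes the argument; this is harmless, since both properties hold. Second, ``nothing beyond shape regularity'' tacitly includes two further assumptions, which the paper also makes implicitly when it speaks of meshes of $\Omega$:
\begin{itemize}
\item each $\mathcal T_h$ exactly triangulates $\Omega$, so $\Omega$ is polyhedral;
\item $h\to0$ along the family.
\end{itemize}
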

\begin{proof}
See \cite[Prop.~3]{HAntil_2025a}.
\end{proof}
\begin{proposition}[Riesz projection on $H_0(\curl)$: stability and convergence]
\label{prop:Riesz-Hcurl} 
Define, for each $u\in H_0(\curl;\Omega)$, the element $R_h u\in \mathcal N_h^0$ by
\begin{equation}\label{eq:def-Rh}
  a(R_h u, v_h) \;=\; a(u, v_h)\qquad\forall\,v_h\in \mathcal N_h^0,
\end{equation}
where $a(u,v):=(u,v)+(\curl u,\curl v)$.
Then the following hold.

\smallskip\noindent{\bf (i) Well-posedness and linearity.}
For each $u$, there exists a unique $R_h u\in\mathcal N_h^0$ solving \eqref{eq:def-Rh}; the
map $R_h:H_0(\curl;\Omega)\to \mathcal N_h^0$ is linear.

\smallskip\noindent{\bf (ii) Projection and Galerkin orthogonality.}
$R_h$ is a projection: $R_h|_{\mathcal N_h^0}=\mathrm{Id}$.
Moreover,
\begin{equation}\label{eq:gal-orth}
  a(u - R_h u,\, v_h)=0\qquad \forall\,v_h\in \mathcal N_h^0.
\end{equation}

\smallskip\noindent{\bf (iii) Stability (contractivity) in $H(\curl)$.}
For all $u\in H_0(\curl)$,
\begin{equation}\label{eq:contract}
  \|R_h u\|_{H(\curl;\Omega)} \;\le\; \|u\|_{H(\curl;\Omega)},
\qquad
  \|u-R_h u\|_{H(\curl;\Omega)} \;=\; \min_{v_h\in\mathcal N_h^0}\|u-v_h\|_{H(\curl;\Omega)}.
\end{equation}

\smallskip\noindent{\bf (iv) Convergence (general).} 
For every $u\in H_0(\curl;\Omega)$,
\begin{equation}\label{eq:conv-general}
  \|u-R_h u\|_{H(\curl;\Omega)} \xrightarrow[h\to0]{} 0,
\quad\text{hence}\quad
  \|u-R_h u\|_{L^2(\Omega)} \to 0,\ \ \|\curl(u-R_h u)\|_{L^2(\Omega)}\to 0 .
\end{equation}
\end{proposition}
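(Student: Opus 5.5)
The plan is to treat $a(\cdot,\cdot)=(\cdot,\cdot)+(\curl\cdot,\curl\cdot)$ as the inner product of the Hilbert space $H_0(\curl;\Omega)$. Then $R_h$ is exactly the $a$-orthogonal projection onto the closed, finite-dimensional subspace $\mathcal N_h^0$, and items (i)--(iii) are standard Hilbert-space facts. Item (iv) is the only one that needs finite element input.

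\textbf{(i) Existence and linearity.} Since $\mathcal N_h^0\subset H_0(\curl;\Omega)$ is finite dimensional, the restriction of $a$ to $\mathcal N_h^0$ is symmetric and coercive: indeed $a(v_h,v_h)=\|v_h\|_{H(\curl;\Omega)}^2$. For fixed $u$, the map $v_h\mapsto a(u,v_h)$ is a bounded linear functional on $\mathcal N_h^0$. The Lax--Milgram (or Riesz) theorem on $\mathcal N_h^0$ then gives a unique $R_hu$ satisfying \eqref{eq:def-Rh}. Linearity follows from uniqueness: $R_h(\alpha u+\beta w)$ and $\alpha R_hu+\beta R_hw$ satisfy the same equations.

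\textbf{(ii) Projection property.} If $u\in\mathcal N_h^0$, then $u$ itself solves \eqref{eq:def-Rh}, so uniqueness gives $R_hu=u$. Galorkin orthogonality \eqref{eq:gal-orth} is simply \eqref{eq:def-Rh} rewritten.

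\textbf{(iii) Stability and best approximation.} Take $v_h=R_hu$ in \eqref{eq:def-Rh} and apply Cauchy--Schwarz in $a$:
\[
\|R_hu\|_{H(\curl;\Omega)}^2=a(u,R_hu)\le\|u\|_{H(\curl;\Omega)}\|R_hu\|_{H(\curl;\Omega)} .
\]
For best approximation, take any $v_h\in\mathcal N_h^0$ and write $u-v_h=(u-R_hu)+(R_hu-v_h)$. The second summand lies in $\mathcal N_h^0$, so by \eqref{eq:gal-orth} it is $a$-orthogonal to the first. Pythagoras then gives
\[
\|u-v_h\|_{H(\curl;\Omega)}^2=\|u-R_hu\|_{H(\curl;\Omega)}^2+\|R_hu-v_h\|_{H(\curl;\Omega)}^2 ,
\]
so $R_hu$ attains the minimum.

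\textbf{(iv) Convergence.} By (iii) it suffices to show that $\inf_{v_h\in\mathcal N_h^0}\|u-v_h\|_{H(\curl;\Omega)}\to0$ for every $u\in H_0(\curl;\Omega)$. This is the main obstacle, because the canonical N\'ed\'elec interpolant $I_h^N$ is not defined on all of $H_0(\curl;\Omega)$. I would use a density argument.

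\emph{Density step.} The space $C_c^\infty(\Omega)^3$ is dense in $H_0(\curl;\Omega)$; this holds on Lipschitz domains, which is in fact how $H_0(\curl;\Omega)$ is characterized (see \cite{VGirault_PARaviart_1986a}). Given $\eta>0$, choose $\varphi\in C_c^\infty(\Omega)^3$ with $\|u-\varphi\|_{H(\curl;\Omega)}<\eta$.

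\emph{Interpolation step.} Since $\varphi$ is smooth with compact support, $I_h^N\varphi$ is well defined and its tangential degrees of freedom on $\partial\Omega$ vanish, so $I_h^N\varphi\in\mathcal N_h^0$. Standard interpolation estimates on shape-regular meshes give
\[
\|\varphi-I_h^N\varphi\|_{H(\curl;\Omega)}\le Ch\,|\varphi|_{H^2(\Omega)}\to0 ,
\]
using that $\curl I_h^N$ equals the Raviart--Thomas interpolant applied to $\curl$ (the commuting diagram).

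\emph{Conclusion.} Combining the two steps,
\[
\|u-R_hu\|_{H(\curl;\Omega)}\le\|u-I_h^N\varphi\|_{H(\curl;\Omega)}\le\eta+Ch|\varphi|_{H^2(\Omega)} ,
\]
so $\limsup_{h\to0}\|u-R_hu\|_{H(\curl;\Omega)}\le\eta$. Since $\eta>0$ is arbitrary, $\|u-R_hu\|_{H(\curl;\Omega)}\to0$. Convergence of the $L^2$ part and of the curl part then follows from the definition of the $H(\curl)$ norm.
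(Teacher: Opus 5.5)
Your proof is correct. Items (i)--(iii) are exactly the statement that $R_h$ is the $a$-orthogonal projection onto the finite-dimensional subspace $\mathcal N_h^0$ of the Hilbert space $(H_0(\curl;\Omega),a)$. Item (iv) follows from best approximation, density of $C_c^\infty(\Omega)^3$ in $H_0(\curl;\Omega)$ on Lipschitz domains, and the canonical N\'ed\'elec interpolation estimate with the commuting diagram.

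The paper gives no argument of its own for this proposition; it only cites Prop.~4 of \cite{HAntil_2025a}. Your self-contained argument is the standard proof of this result.
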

\begin{proof}
See \cite[Prop.~4]{HAntil_2025a} for a proof.
\end{proof}

\begin{lemma}[Time–cell averaging converges to the identity]
\label{lem:time-average-convergence}
Let $X$ be a Hilbert space (e.g.\ $X=L^2(\Omega)^3$). For a uniform partition
$\{I_n=(t^n,t^{n+1}]\}_{n=0}^{N_t-1}$ of $(0,T]$ with $\Delta t=t^{n+1}-t^n$,
define the piecewise–constant time–averaging operator
\[
  (\Pi_{\Delta t}\Phi)(s)
  := \frac1{\Delta t}\int_{I_n}\Phi(t)\,dt
  \qquad\text{for }s\in I_n,
\]
for $\Phi\in L^1(0,T;X)$. Then
\[
  \|\Pi_{\Delta t}\Phi-\Phi\|_{L^1(0,T;X)}\xrightarrow[\Delta t\to0]{}0.
\]
\end{lemma}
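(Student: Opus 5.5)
The plan is the classical density argument for Bochner spaces, the vector-valued analogue of the fact that Steklov/Haar averages converge in $L^1$. The key observation is that $\Pi_{\Delta t}$ is a contraction on $L^1(0,T;X)$, uniformly in $\Delta t$. For $s\in I_n$ we have $\|(\Pi_{\Delta t}\Phi)(s)\|_X\le \frac{1}{\Delta t}\int_{I_n}\|\Phi(t)\|_X\,dt$, since the norm of a Bochner integral is bounded by the integral of the norm. Integrating over $I_n$ and summing over $n$ gives
\[
\|\Pi_{\Delta t}\Phi\|_{L^1(0,T;X)}\le \|\Phi\|_{L^1(0,T;X)}.
\]
Linearity of $\Pi_{\Delta t}$ is immediate.

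Next I will prove convergence on a dense class, namely $C([0,T];X)$, which is dense in $L^1(0,T;X)$ (simple functions are dense, and each $\chi_A x$ is approximated by continuous functions times $x$). For $\Phi\in C([0,T];X)$, uniform continuity gives a modulus $\omega$ with $\|\Phi(t)-\Phi(s)\|_X\le\omega(|t-s|)$ and $\omega(r)\to0$ as $r\to0$. For $s\in I_n$,
\[
\|(\Pi_{\Delta t}\Phi)(s)-\Phi(s)\|_X\le \frac1{\Delta t}\int_{I_n}\|\Phi(t)-\Phi(s)\|_X\,dt\le\omega(\Delta t),
\]
so $\|\Pi_{\Delta t}\Phi-\Phi\|_{L^1(0,T;X)}\le T\,\omega(\Delta t)\to0$.

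Finally, an $\varepsilon/3$ argument extends this to all of $L^1(0,T;X)$. Given $\Phi\in L^1(0,T;X)$ and $\eta>0$, pick $\Psi\in C([0,T];X)$ with $\|\Phi-\Psi\|_{L^1}<\eta$. Then
\[
\|\Pi_{\Delta t}\Phi-\Phi\|_{L^1}\le\|\Pi_{\Delta t}(\Phi-\Psi)\|_{L^1}+\|\Pi_{\Delta t}\Psi-\Psi\|_{L^1}+\|\Psi-\Phi\|_{L^1}\le 2\eta+T\omega_\Psi(\Delta t).
\]
Taking $\limsup_{\Delta t\to0}$ and then letting $\eta\to0$ concludes.

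The only mildly delicate points are the density of continuous functions in the Bochner space and the inequality $\|\int\Phi\|\le\int\|\Phi\|$. Both are standard, and the Hilbert structure of $X$ is not needed. Neither step is a real obstacle: the essential ingredient is the uniform contractivity bound in the first step.
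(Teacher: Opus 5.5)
Your proposal is correct and follows the paper's proof essentially step for step. Both use $L^1$-contractivity of $\Pi_{\Delta t}$ via the norm bound for Bochner integrals, the bound $T\,\omega(\Delta t)$ for continuous $\Phi$, and the density argument giving $2\varepsilon + T\,\omega(\Delta t)$; your remark that the Hilbert structure of $X$ is never used is also accurate.
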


\begin{proof}
\emph{Step 1: $\Pi_{\Delta t}$ is $L^1$–contractive.}
For each $n$ and $s\in I_n$, by Jensen’s inequality in $X$,
\[
  \big\|(\Pi_{\Delta t}\Phi)(s)\big\|_X
  = \left\|\frac1{\Delta t}\int_{I_n}\Phi(t)\,dt\right\|_X
  \le \frac1{\Delta t}\int_{I_n}\|\Phi(t)\|_X\,dt .
\]
Integrating over $s\in I_n$ and summing in $n$ yields
$\|\Pi_{\Delta t}\Phi\|_{L^1(0,T;X)}\le \|\Phi\|_{L^1(0,T;X)}$.

\emph{Step 2: Convergence for smooth functions.}
If $\Phi\in C^0([0,T];X)$, let $\omega(\cdot)$ be its (scalar) modulus of continuity:
$\|\Phi(t)-\Phi(s)\|_X\le \omega(|t-s|)$ with $\omega(r)\to0$ as $r\downarrow0$.
For $s\in I_n$,
\[
  \big\|(\Pi_{\Delta t}\Phi)(s)-\Phi(s)\big\|_X
  \le \frac1{\Delta t}\int_{I_n}\|\Phi(t)-\Phi(s)\|_X\,dt
  \le \frac1{\Delta t}\int_{I_n}\omega(|t-s|)\,dt
  \le \omega(\Delta t).
\]
Hence $\|\Pi_{\Delta t}\Phi-\Phi\|_{L^1(0,T;X)}\le T\,\omega(\Delta t)\to0$.

\emph{Step 3: Density argument.}
$C^0([0,T];X)$ is dense in $L^1(0,T;X)$. Given
$\Phi\in L^1(0,T;X)$ and $\varepsilon>0$, select $\Phi_\varepsilon\in C^0([0,T];X)$ with
$\|\Phi-\Phi_\varepsilon\|_{L^1(0,T;X)}<\varepsilon$. Then, using Step~1 and Step~2,
\[
\begin{aligned}
\|\Pi_{\Delta t}\Phi-\Phi\|_{L^1(0,T;X)}
&\le \|\Pi_{\Delta t}(\Phi-\Phi_\varepsilon)\|_{L^1(0,T;X)}
   +\|\Pi_{\Delta t}\Phi_\varepsilon-\Phi_\varepsilon\|_{L^1(0,T;X)}
   +\|\Phi_\varepsilon-\Phi\|_{L^1(0,T;X)}\\
&\le 2\varepsilon+\|\Pi_{\Delta t}\Phi_\varepsilon-\Phi_\varepsilon\|_{L^1(0,T;X)}
  \xrightarrow[\Delta t\to0]{} 2\varepsilon.
\end{aligned}
\]
Since $\varepsilon>0$ is arbitrary, the claim follows.
\end{proof}

\begin{lemma}[Discrete IBP in time with a smooth cutoff]
\label{lem:disc-IBP}
Let $\{u^n\}_{n=0}^{N_t}\subset L^2(\Omega)^3$ and define the piecewise–linear lift
\[
u_h^{\mathrm{pl}}(t)
:= u^n + (t-t^n)\,\delta_t u^{n+1}
\qquad\text{for }t\in I_n:=(t^n,t^{n+1}],\quad
\delta_t u^{n+1}:=\frac{u^{n+1}-u^n}{\Delta t}.
\]
Fix $v_h\in L^2(\Omega)^3$ (constant in time) and $\eta\in C_c^\infty(0,T)$, and set the cell average
\[
\eta^{n+\frac12}:=\frac{1}{\Delta t}\int_{t^n}^{t^{n+1}}\eta(t)\,dt.
\]
Then
\begin{equation}\label{eq:disc-ibp}
\sum_{n=0}^{N_t-1}\Delta t\,(\delta_t u^{n+1},v_h)\,\eta^{n+\frac12}
\;=\;-\int_0^T (u_h^{\mathrm{pl}}(t),v_h)\,\eta'(t)\,dt .
\end{equation}
\end{lemma}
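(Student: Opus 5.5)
The identity is a scalar summation-by-parts on each time cell. The plan is to work cell by cell, use that $u_h^{\mathrm{pl}}$ is affine on each $I_n$, and then let the boundary terms telescope; the compact support of $\eta$ kills the endpoint contributions.

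\emph{Step 1 (reduce to scalars).} Set $a^n:=(u^n,v_h)\in\mathbb R$. Because $v_h$ is constant in time, $(u_h^{\mathrm{pl}}(t),v_h)=a^n+(t-t^n)(a^{n+1}-a^n)/\Delta t=:a^{\mathrm{pl}}(t)$ on $I_n$. This is the continuous piecewise affine interpolant of $\{a^n\}$, and $(\delta_t u^{n+1},v_h)=(a^{n+1}-a^n)/\Delta t$. The claim therefore becomes a statement about the real function $a^{\mathrm{pl}}\in W^{1,\infty}(0,T)$.

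\emph{Step 2 (cellwise integration by parts).} On each $I_n$, the function $a^{\mathrm{pl}}$ is affine and $\eta$ is smooth. Integrating by parts gives
\[
-\int_{t^n}^{t^{n+1}} a^{\mathrm{pl}}\eta'\,dt=-\big(a^{n+1}\eta(t^{n+1})-a^n\eta(t^n)\big)+\int_{t^n}^{t^{n+1}}\frac{a^{n+1}-a^n}{\Delta t}\,\eta\,dt .
\]
The last integral equals $\Delta t\,(\delta_t u^{n+1},v_h)\,\eta^{n+\frac12}$ by the definition of the cell average.

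\emph{Step 3 (sum and telescope).} Sum over $n=0,\dots,N_t-1$. Since $a^{\mathrm{pl}}$ is continuous at the nodes, the boundary terms telescope to $-(a^{N_t}\eta(T)-a^0\eta(0))$. This vanishes because $\eta\in C_c^\infty(0,T)$ gives $\eta(0)=\eta(T)=0$. What remains is exactly \eqref{eq:disc-ibp}.

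The computation is routine. The only point that needs attention is the continuity of the piecewise linear lift across the nodes $t^n$, which is what makes the telescoping work. It holds by construction, since $u_h^{\mathrm{pl}}(t^{n+1})=u^{n+1}$ from both sides.
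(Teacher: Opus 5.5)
Your proposal is correct and follows essentially the same route as the paper: use that the lift is affine on each $I_n$, integrate by parts cellwise, sum, and let the boundary terms telescope to $(u^{N_t},v_h)\eta(T)-(u^0,v_h)\eta(0)=0$. Your reduction to the scalar sequence $a^n=(u^n,v_h)$ is only a notational convenience and does not change the argument.
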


\begin{proof}
On each time cell $I_n=(t^n,t^{n+1}]$ the lift is affine, hence
$\partial_t u_h^{\mathrm{pl}}(t)=\delta_t u^{n+1}$ for $t\in I_n$.
Therefore
\[
\Delta t\,(\delta_t u^{n+1},v_h)\,\eta^{n+\frac12}
= (\delta_t u^{n+1},v_h)\int_{t^n}^{t^{n+1}}\eta(t)\,dt
= \int_{t^n}^{t^{n+1}}(\partial_t u_h^{\mathrm{pl}}(t),v_h)\,\eta(t)\,dt .
\]
Integrate by parts in time on $I_n$ (note that $u_h^{\mathrm{pl}}$ is continuous and piecewise $C^1$):
\[
\int_{t^n}^{t^{n+1}}(\partial_t u_h^{\mathrm{pl}},v_h)\,\eta
= \bigl( u_h^{\mathrm{pl}}(t),v_h\bigr)\,\eta(t)\Big|_{t^n}^{t^{n+1}}
  - \int_{t^n}^{t^{n+1}} (u_h^{\mathrm{pl}}(t),v_h)\,\eta'(t)\,dt .
\]
Summing over $n=0,\dots,N_t-1$ yields
\[
\sum_{n=0}^{N_t-1}\Delta t\,(\delta_t u^{n+1},v_h)\,\eta^{n+\frac12}
= \sum_{n=0}^{N_t-1}\bigl( u_h^{\mathrm{pl}}(t^{n+1}),v_h\bigr)\,\eta(t^{n+1})
 - \sum_{n=0}^{N_t-1}\bigl( u_h^{\mathrm{pl}}(t^n),v_h\bigr)\,\eta(t^n)
 - \int_0^T (u_h^{\mathrm{pl}}(t),v_h)\,\eta'(t)\,dt .
\]
The two discrete sums telescope. Since $\eta\in C_c^\infty(0,T)$, we have $\eta(0)=\eta(T)=0$, and because
$u_h^{\mathrm{pl}}$ is continuous at grid nodes (with $u_h^{\mathrm{pl}}(t^n)=u^n$), the boundary contribution vanishes:
\[
\sum_{n=0}^{N_t-1}\bigl( u_h^{\mathrm{pl}}(t^{n+1}),v_h\bigr)\,\eta(t^{n+1})
 - \sum_{n=0}^{N_t-1}\bigl( u_h^{\mathrm{pl}}(t^n),v_h\bigr)\,\eta(t^n)=0.
\]
This gives \eqref{eq:disc-ibp}.
\end{proof}

\bibliographystyle{plain}
\bibliography{HDJR}

\end{document}